\documentclass{article}

\usepackage{iclr2025_conference,times}
\usepackage{natbib}
\setcitestyle{authoryear,round,citesep={;},aysep={,},yysep={;}}
\usepackage[colorlinks,linkcolor={blue!75!black},urlcolor=red,citecolor={blue!75!black}]{hyperref}
\usepackage{algorithm}
\usepackage[noEnd=false]{algpseudocodex} 

\usepackage{amsmath,amsthm,amsfonts} 
\newcommand\numberthis{\addtocounter{equation}{1}\tag{\theequation}}
\usepackage{thmtools} 
	\allowdisplaybreaks[1]
    
	\newtheorem*{theorem*}{Theorem}
    \newtheorem*{proposition*}{Proposition}
    \newtheorem*{lemma*}{Lemma}
	\newtheorem*{corollary*}{Corollary}
    \newtheorem*{application*}{Application}
    \newtheorem*{example*}{Example}
    \newtheorem*{conjecture*}{Conjecture}

    \newtheorem*{definition*}{Definition}
    \newtheorem*{construction*}{Construction}
    \newtheorem*{assumption*}{Assumption}
    \newtheorem*{remark*}{Remark}
    \newtheorem*{notation*}{Notation}
    \newtheorem*{problem*}{Problem}

    \definecolor{shadecolor}{gray}{0.9}
    \declaretheoremstyle[
    headfont=\normalfont\bfseries,
    notefont=\mdseries, notebraces={(}{)},
    bodyfont=\normalfont,
    postheadspace=0.5em,
    spaceabove=1pt,
    mdframed={
      skipabove=8pt,
      skipbelow=8pt,
      hidealllines=true,
      backgroundcolor={shadecolor},
      innerleftmargin=4pt,
      innerrightmargin=4pt}
    ]{shaded}

	\declaretheorem[style=shaded,
                    name=Theorem,
					style=plain,
					numberwithin=section,
					refname={theorem,theorems},
					Refname={Theorem,Theorems}]
					{theorem}
	\declaretheorem[style=shaded,
                    name=Proposition,
					sibling=theorem,
					style=plain,
					refname={proposition,propositions},
					Refname={Proposition,Propositions}]
					{proposition}
	\declaretheorem[style=shaded,
                    name=Lemma,
					sibling=theorem,
					style=plain,
					refname={lemma,lemmas},
					Refname={Lemma,Lemmas}]
					{lemma}
	\declaretheorem[style=shaded,
                    name=Corollary,
					sibling=theorem,
					style=plain,
					refname={corollary,corollaries},
					Refname={Corollary,Corollaries}]
					{corollary}

	\declaretheorem[style=shaded,
                    name=Definition,
					sibling=theorem,
					style=definition,
					refname={definition,definitions},
					Refname={Definition,Definitions}]
					{definition}
	
	\declaretheorem[style=shaded,
                    name=Assumption,
					sibling=theorem,
					style=definition,
					refname={assumption,assumptions},
					Refname={Assumption,Assumptions}]
					{assumption}


\usepackage{enumitem} 
\usepackage{subcaption} 
\usepackage{url} 
\usepackage{float}
\usepackage{csquotes}
\usepackage{wrapfig}

\usepackage{cleveref} 

\usepackage{colortbl}
\usepackage{pifont}
\newcommand{\cmark}{\ding{51}}%
\newcommand{\xmark}{\ding{55}}%

\usepackage[colorinlistoftodos,bordercolor=orange,backgroundcolor=orange!20,linecolor=orange,textsize=scriptsize]{todonotes}

\usepackage{booktabs}
\usepackage{aligned-overset}

\renewcommand{\a}{\alpha}
\renewcommand{\b}{\beta}
\newcommand{\g}{\gamma}
\renewcommand{\d}{\delta}
\newcommand{\e}{\varepsilon} 

\renewcommand{\th}{\theta}

\renewcommand{\l}{\lambda}
\newcommand{\m}{\mu}

\renewcommand{\r}{\rho}
\newcommand{\s}{\sigma}
\renewcommand{\t}{\tau}





\newcommand{\N}{\mathbb{N}}

\newcommand{\R}{\mathbb{R}}

\DeclareMathOperator{\pr}{\mathbb{P}}
\DeclareMathOperator{\E}{\mathbb{E}}


\newcommand{\q}{\quad}

\title{Sharpness-Aware Minimization: \\ General Analysis and Improved Rates}

\author{Dimitris Oikonomou \\
CS \& MINDS \\
Johns Hopkins University \\
\texttt{doikono1@jh.edu} \\
\And
Nicolas Loizou \\
AMS \& MINDS \\
Johns Hopkins University \\
\texttt{nloizou1@jh.edu} \\
}

\iclrfinalcopy

\usepackage{enumitem}
\begin{document}

\maketitle

\begin{abstract}
    Sharpness-Aware Minimization (SAM) has emerged as a powerful method for improving generalization in machine learning models by minimizing the sharpness of the loss landscape. However, despite its success, several important questions regarding the convergence properties of SAM in non-convex settings are still open, including the benefits of using normalization in the update rule, the dependence of the analysis on the restrictive bounded variance assumption, and the convergence guarantees under different sampling strategies. To address these questions, in this paper, we provide a unified analysis of SAM and its unnormalized variant (USAM) under one single flexible update rule (Unified SAM), and we present convergence results of the new algorithm under a relaxed and more natural assumption on the stochastic noise. Our analysis provides convergence guarantees for SAM under different step size selections for non-convex problems and functions that satisfy the Polyak-Lojasiewicz (PL) condition (a non-convex generalization of strongly convex functions). The proposed theory holds under the arbitrary sampling paradigm, which includes importance sampling as special case, allowing us to analyze variants of SAM that were never explicitly considered in the literature. Experiments validate the theoretical findings and further demonstrate the practical effectiveness of Unified SAM in training deep neural networks for image classification tasks.
\end{abstract}

\vspace{-5mm}
\section{Introduction}
\label{sec:intro}
\vspace{-2mm}
Consider the classical finite-sum optimization problem 
\begin{align}
    \label{eq:main-prob}
    \min_{x\in\R^d}\left[f(x)=\frac{1}{n}\sum_{i=1}^nf_i(x)\right],
\end{align}
where each $f_i$ is differentiable, $L_i$-smooth and lower bounded. Let $X^*$ be the set of minimizers of $f$, which we assume is non-empty. In practical scenarios, the variable $x$ represents the model parameters, $n$ is the total number of training instances, and the functions $f_i$ are loss functions that measure how close our model is to the $i$-th training data point. The goal is to minimize the average loss of all training instances. 

Understanding the generalization capabilities of Deep Neural Networks (DNNs) is a central concern in machine learning research, \citep{zhang2016understanding,hardt2016train,neyshabur2017exploring,wilson2017marginal,neyshabur2018towards,zhang2021understanding}. The training objective function $f$ has numerous stationary points that perfectly fit the training data \citep{liu2020bad}, and each of these points can lead to dramatically varying generalization performances. This suggests that minimizing the training objective using a carefully selected optimization algorithm can lead to convergence at a point with improved generalization performance \citep{foret2021sharpness}.

Recent studies have observed that the sharpness of the training loss, that is, how rapidly it changes in a neighborhood around the model parameters, correlates strongly with the generalization error \citep{keskar2016large,jiang2019fantastic,singh2025avoiding,dziugaite2018entropy}. This observation has motivated recent works \citep{foret2021sharpness,zheng2021regularizing,wu2020adversarial,andriushchenko2023sharpness,xie2024sampa,tahmasebi2024universal} aiming to minimize sharpness to improve generalization. More specifically, building on these ideas \citet{foret2021sharpness}, proposed reformulating the optimization problem in (\ref{eq:main-prob}) into a min-max problem of the following form:
\begin{align*}
  \min_{x\in\R^d}\max_{\|\e\|\leq\r}f(x+\e)
\end{align*}
where $\e$ represents the radius of the desired neighborhood. The merits of such a formulation reside in the fact that essentially we minimize the empirical sharpness measure $\max_{\|\e\|\leq\r}f(x+\e)-f(x)$, which inevitably will lead to flatter minima. The objective now is to find $x$ that minimizes $f$ not just at a specific point but across the entire $\e$-neighborhood. By taking the first-order Taylor expansion of $f$ around $x$ and solving for the optimal $\e^*$, the (Normalized) Sharpness-Aware Minimization (SAM) update rule is obtained:
\vspace{-3mm}
\begin{align}
    \label{eq:nsam}
    x^{t+1}=x^t-\g_t\nabla f_{S_t}\left(x^t+\r_t\frac{\nabla f_{S_t}(x^t)}{\|\nabla f_{S_t}(x^t)\|}\right),\tag{SAM}
\end{align}
where $S_t\subseteq[n]$ is a random subset of data points (mini-batch) with cardinality $\t$ sampled independently at each iteration $t$. The normalization of the inner gradient ensures that the point $\tilde{x}^t=x^t+\r_t\frac{\nabla f_{S_t}(x^t)}{\|\nabla f_{S_t}(x^t)}$ is a good approximation of $x^t$, since $\|\tilde{x}^t-x^t\|=\r_t$. This leads to a more stable optimization process, as explained in \citet{dai2024crucial}.

In an orthogonal direction and building upon \ref{eq:nsam}, Unnormalized Sharpness-Aware Minimization (USAM) was introduced by \citet{andriushchenko2022towards} and further investigated in \citet{shin2024analyzing,dai2024crucial}. The update rule for USAM is defined as follows: 
\begin{align}
    \label{eq:usam}
    x^{t+1}=x^t-\g_t\nabla f_{S_t}\left(x^t+\r_t\nabla f_{S_t}(x^t)\right).\tag{USAM}
\end{align}
In contrast to \ref{eq:nsam}, \ref{eq:usam} omits the normalization thus the point $\tilde{x}^t$ can be potentially far from $x^t$ making the $\tilde{x}^t$'s of USAM updates much larger. This means that the removal of normalization can lead to much more aggressive steps making the \ref{eq:usam} potentially more unstable. 

Although the two variants appear closely related, the proof techniques and upper bounds used in the convergence analysis of \ref{eq:nsam} are substantially different from those in \ref{eq:usam}. Furthermore, the convergence guarantees of the two variants vary significantly. For example, in the deterministic setting (full-batch), \ref{eq:nsam} guarantees convergence only to a neighborhood of the solution, whereas \ref{eq:usam} does not. Additionally, the step sizes $\gamma_t$ and $\rho_t$ used in the two update rules to guarantee convergence are very different. All of these differences motivate the importance and necessity of a novel general analysis of SAM-type algorithms, unifying the two main variants (\ref{eq:nsam} and \ref{eq:usam}) and providing the ability to design and analyze new SAM-like methods filling existing gaps in the theoretical understanding of the update rules.

In this work we develop such unified framework that allows the combination of the two approaches and, at the same time, obtains the best-known convergence guarantees under relaxed assumptions. 

\paragraph{Main Contributions.} Our main contributions are summarized below.

\textbf{$\diamond$ Unified Framework.} We propose the Unified SAM, an update rule that is a convex combination of \ref{eq:nsam} and \ref{eq:usam}, given by:
\begin{align}
    \label{eq:unifiedsam-mb}
    x^{t+1}=x^t-\g_t\nabla f_{S_t}\left(x^t+\r_t\left(1-\l_t+\frac{\l_t}{\|\nabla f_{S_t}(x^t)\|}\right)\nabla f_{S_t}(x^t)\right)\tag{Unified SAM}
\end{align}
where $\lambda \in [0,1]$. The new formulation captures both \ref{eq:usam} and \ref{eq:nsam} as special cases ($\l=0$ and $\l=1$, respectively), but more importantly, it opens up a wide range of possible update rules beyond these traditional settings. The unified framework offers the flexibility to adjust the degree of normalization (using different values for $\lambda$) based on specific model needs, offering a more versatile approach to SAM. 

\textbf{$\diamond$ Technical Assumptions on the Stochastic Noise.}
Existing convergence analyses of stochastic SAM rely heavily on the bounded variance assumption, that is, there exists a $\sigma  \geq 0$ such that $\E\|\nabla f_{S_t}(x)-\nabla f(x)\|^2\leq\s^2$, \citep{andriushchenko2022towards,si2023practical,li2023enhancing,harada2024convergence,mi2022make,zhuang2022surrogate} or sometimes to the much stronger bounded gradient condition $\E\|\nabla f_{S_t}(x)\|^2\leq q^2$, where $q\geq0$ \citep{mi2022make,zhuang2022surrogate}. While these assumptions have been crucial in previous analyses, they can be overly restrictive. In the literature of convergence analysis for stochastic gradient descent (SGD), there have been a lot of efforts recently on relaxing such assumptions \citep{gower2019sgd,khaled2020better,gower2021sgd}, but, to date, no work has successfully used similar ideas for the analysis of SAM. 
In our analysis, we relax the bounded gradients/variance assumptions by utilizing the recently proposed Expected Residual (ER) condition \citet{gower2021sgd, khaled2020better}. As we explain later, in several scenarios, including smooth non-convex problems, ER holds for free and allows us to provide step sizes for SAM related to the sampling strategies. 

\textbf{$\diamond$ Convergence guarantees for Unified SAM.} We provide tight convergence guarantees for \ref{eq:unifiedsam-mb}, for smooth functions satisfying the Polyak-Lojasiewicz (PL) condition \citep{polyak1987introduction,lojasiewicz1963topological,karimi2016linear} and for general non-convex functions. See also \Cref{tab:results} for a summary of our results and comparison with closely related works. 
\vspace{-2mm}
\begin{itemize}[leftmargin=*]
    \setlength{\itemsep}{0pt}
    \item \textit{PL functions:} For constant step-sizes $\gamma$ and $\rho$ we prove linear convergence for \ref{eq:unifiedsam} to a neighborhood of the solution. Our theorem holds without requiring the much stronger assumptions of interpolation condition or the bounded variance assumption of previous works \citep{andriushchenko2022towards,shin2024analyzing}. Additionally, we prove that for decreasing step sizes $\gamma_t$ and $\rho_t$, the \ref{eq:unifiedsam} converges to the exact solution with a sublinear $O(1/t)$ rate (under the ER condition). Our theoretical results hold under the arbitrary sampling paradigm and, as such, can capture tight convergence guarantees in the deterministic setting. For PL functions in the deterministic setting (full-batch SAM), we show that \ref{eq:usam} converges to the exact solution, while \ref{eq:nsam} does not. This observation was first noted by \citet{si2023practical} for deterministic algorithms. To the best of our knowledge, our work is the first that provides tight convergence guarantees, showing this behaviour as a special case of stochastic algorithms. 

    \item \textit{Non-convex functions:} 
    Under the ER condition, we show that for general non-convex functions \ref{eq:unifiedsam} with step sizes that depend on $T$ (the total number of iterations) achieves $\E\|\nabla f(x^T)\|<\e$ for a given $\e$ at a sublinear rate. This is the first result that drops the bounded variance assumption for both \ref{eq:usam} and \ref{eq:nsam}, \citep{andriushchenko2022towards,li2023enhancing}, and substitutes it with the Expected Residual condition. 
\end{itemize}
Finally, as corollaries of the main theorems for the above two classes of problems, we obtain the state-of-the-art convergence guarantees for SGD (a special case of SAM with $\rho=0$), showing the tightness of our analysis. 

\begin{table*}[t]
    \centering
    \begin{tabular}{llccc}
        \toprule
        \textbf{Work} & \textbf{Assumptions} & \multicolumn{1}{p{2cm}}{\centering \textbf{Arbitrary \\ Sampling?}} & \textbf{SAM Variant}\\
        \midrule
        \multicolumn{4}{l}{\textit{PL functions}} \\
        \midrule
        \citep{andriushchenko2022towards} & BV & \xmark & USAM \\
        \citep{shin2024analyzing} & Interpolation & \xmark & USAM \\
        \citep{dai2024crucial} & Deterministic & \xmark & USAM \\
        \rowcolor{green!30}\Cref{thm:unif-sam-pl-abc,thm:unif-sam-pl-abc-dec} & \ref{eq:abc} & \cmark & \ref{eq:unifiedsam} \\
        \midrule
        \multicolumn{4}{l}{\textit{General Non-convex functions}} \\
        \midrule
        \citep{mi2022make} & BV, BG & \xmark & SAM/SSAM \\
        \citep{zhuang2022surrogate} & BV, BG & \xmark &  GSAM \\
        \citep{andriushchenko2022towards} & BV & \xmark & USAM \\
        \citep{li2023enhancing} & BV & \xmark &  SAM \\
        \citep{si2023practical} & BV & \xmark &  SAM \\
        \rowcolor{green!30}\Cref{thm:unif-sam-abc-general} & \ref{eq:abc} & \cmark & \ref{eq:unifiedsam} \\
        \bottomrule
    \end{tabular}
    \caption{Summary of the convergence results in the SAM literature. In all works, smoothness is assumed. The top part of the table is for PL functions and the lower part is for general non-convex functions. Here BV = Bounded Variance, BG = Bounded Gradients.}
    \vspace{-5mm}
    \label{tab:results}
\end{table*}

\textbf{$\diamond$ Arbitrary Sampling.} Via a stochastic reformulation of the finite sum problem \eqref{eq:main-prob}, firstly introduced in \citet{gower2019sgd}, we
explain how our convergence guarantees of \ref{eq:unifiedsam-mb} hold under the arbitrary sampling paradigm. This allows us to cover a wide range of samplings for \ref{eq:usam} and \ref{eq:nsam} (and their convex combination via $\lambda \in [0,1]$) that were never considered in the literature before, including uniform sampling and importance sampling as special cases. In this sense, our analysis of \ref{eq:unifiedsam-mb} is unified for different sampling strategies. 

\textbf{$\diamond$ Numerical Evaluation.} In \Cref{sec:exps}, we present extensive experiments validating different aspects of our theoretical results (behavior of methods in the deterministic setting, importance sampling, and different step-size selections). We also assess the performance of \ref{eq:unifiedsam} in training deep neural networks for multi-class image classification problems. An open-source implementation of our method is available at \url{https://github.com/dimitris-oik/unifiedsam}. 

\vspace{-2mm}
\section{Unified SAM with Arbitrary Sampling}
\label{sec:reform}

In this work, we provide a theoretical analysis of \ref{eq:unifiedsam} that
allows us to obtain convergence guarantees of any minibatch and reasonable sampling selection. 

We are able to do that by leveraging the recently proposed \textquote{stochastic reformulation} of the minimization problem (\ref{eq:main-prob}) from \citet{gower2019sgd,gower2021sgd}. Following an identical setting to \citet{gower2021sgd}, we assume that we have access to unbiased gradient estimates $g(x)\in\R^d$ such that $\E[g(x)]=\nabla f(x)$. For example, we can have $g(x)=\frac{1}{\t}\sum_{i\in S}\nabla f_i(x)$ to be a mini-batch, where $S\subseteq[n]$ is chosen uniformly at random with $|S|=\t$.  To accommodate any form of mini-batching, we utilize the arbitrary sampling notation $g(x)=\nabla f_v(x):=\frac{1}{n}\sum_{i=1}^nv_i\nabla f_i(x)$, where $v\in\R^n$ is a random sampling vector drawn from a distribution $\mathcal{D}$ such that $\E_{\mathcal{D}}[v_i]=1$, for $i=1,\dots,n$. Then the original problem (\ref{eq:main-prob}) can be reformulated as $\min_{x\in\R^d}\E_{\mathcal{D}}\left[f_v(x):=\frac{1}{n}\sum_{i=1}^nv_if_i(x)\right]$. Note that it follows immediately from the definition of sampling vector that $\E[g(x)]=\frac{1}{n}\sum_{i=1}^n\E[v_i]\nabla f_i(x)=\nabla f(x)$. Using this reformulation of the original problem, the update rule of \ref{eq:unifiedsam-mb} can be rewritten as follows: 
\begin{align}
    \label{eq:unifiedsam}
    x^{t+1}=x^t-\g_tg\left(x^t+\r_t\left(1-\l_t+\frac{\l_t}{\|g(x^t)\|}\right)g(x^t)\right),\tag{Unified SAM}
\end{align}
where $g(x^t)\sim\mathcal{D}$ is sampled i.i.d at each iteration and $\r_t\geq0$, $\g_t>0$ and $\l_t\in[0,1]$. The name unified stems from the fact that the update rule indeed unifies both \ref{eq:usam} and \ref{eq:nsam}, however, we acknowledge that there exist other SAM-like variants that our approach does not cover.

\textbf{Arbirtary Sampling.}
Using the stochastic reformulation, the update rule of \ref{eq:unifiedsam} includes several variants of the algorithm, each related to different sampling, by simply varying the distribution $\mathcal{D}$ (that satisfies $\E_{\mathcal{D}}[v_i]=1,\forall i\in[n]$). This flexibility implies that different choices of $\mathcal{D}$ lead to distinct SAM-type methods (never proposed in the literature before) for solving the original problem (\ref{eq:main-prob}). In this work we focus on two representative sampling distributions, without aiming to be exhaustive:
\begin{enumerate}[leftmargin=*]
    \setlength{\itemsep}{0pt}
    \item \textbf{Single element sampling:} We choose only singleton sets $S=\{i\}$ for $i\in[n]$, i.e. $\pr[|S|=1]=1$. Each number $i$ is sampled with probability $p_i\in[0,1]$ or more formally the vector $v\in\R^n$ is defined via $\pr[v=e_i/p_i]=p_i$, where $\sum_{i=1}^np_i=1$. It is clear that $\E[v_i]=1$. For example, when $p_i=1/n$ for all $i$ then this reduces to the well-known \textit{uniform sampling}. 

    \item \textbf{$\t$-nice Sampling:} 
    Let $\t\in[n]$. We generate a random subset $S\subseteq[n]$ by choosing uniformly from all subsets of size $\t$. More formally, the vector $v\in\R^n$ is defined by $\pr[v=\frac{n}{\t}\sum_{i\in S}e_i]:=1/\binom{n}{\t}=\frac{\t!(n-\t)!}{n!}$ for any subset $S\subseteq[n]$ with $|S|=\t$. Using a double counting argument, one can show that $\E[v_i]=1$ (see \citet{gower2019sgd}). 
\end{enumerate}

Importantly, our analysis applies to all forms of mini-batching and supports various choices of sampling vectors $v$. Later in \Cref{sec:importance}, we provide additional details on non-uniform single element sampling strategies. In addition, it is clear that if $\t=n$ in the $\t$-nice sampling then we recover the full batch or deterministic regime. Later in \Cref{sec:pl-results}, we further demonstrate how our analysis encompasses deterministic \ref{eq:nsam} and \ref{eq:usam} as special cases. 

\vspace{-2mm}
\section{Convergence Analysis}
\label{sec:convergence}

In this section, we present our main convergence results. Firstly, we introduce the main assumption for our results, namely the Expected Residual (ER) condition. Then we focus on PL functions, where we demonstrate a linear convergence rate using constant step sizes, and also provide a variant with decreasing step sizes for convergence to the exact solution. Moreover, we extend the analysis to general non-convex functions. Lastly, we discuss the use of importance sampling.

\vspace{-2mm}
\subsection{Main Assumption}
\label{sec:assumptions}

In all our theoretical results, we rely on the Expected Residual condition. 

\begin{assumption}[Expected Residual Condition]
    Let $f^{\inf}=\inf_{x\in\R^n}f(x)$. We say the Expected Residual condition holds if there exist parameters $A,B,C\geq0$ such that for an unbiased estimator $g(x)$ of $\nabla f(x)$, we have that for all $x\in\R^d$ 
    \begin{align}
        \label{eq:abc}
        \E_{\mathcal{D}}\|g(x)\|^2\leq2A[f(x)-f^{\inf}]+B\|\nabla f(x)\|^2+C.\tag{ER}
    \end{align}
\end{assumption}

Most prior works in the SAM literature assume either bounded gradients (e.g., \citet{mi2022make,zhuang2022surrogate}) or bounded variance (e.g., \citet{andriushchenko2022towards,harada2024convergence}). Both conditions are stronger assumptions than \ref{eq:abc}. Note that the bounded gradients assumption is captured by \ref{eq:abc} for $A=B=0$, $C>0$, while the bounded variance is obtained for $A=0$, $B=1$, and $C>0$. For a detailed analysis of other conditions that automatically satisfy \ref{eq:abc}, see \citet{gower2021sgd} and \citet{khaled2020better}. Finally, when each $f_i$ is $L_i$-smooth under mild assumptions on the distribution $\mathcal{D}$, one can show that \ref{eq:abc} holds immediately (not an assumption but property of the problem) and has closed-from expressions for the constants A, B, and C. For more details on the expressions A, B, and C in this scenario, please check \Cref{sec:abc}. 

\subsection{PL functions}
\label{sec:pl-results}

One of the popular generalizations of strong convexity in the literature is the Polyak-Lojasiewicz (PL) condition, \citep{karimi2016linear,lei2019stochastic}. We call a function $\mu$-PL if  $\|\nabla f(x)\|^2\geq2\m(f(x)-f(x^*)),$ for all $x\in\R^d$. In the following, we prove linear convergence of \ref{eq:unifiedsam} for $\mu$-PL functions. 

\begin{theorem}
    \label{thm:unif-sam-pl-abc}
    Assume that each $f_i$ is $L_i$-smooth, $f$ is $\mu$-PL and the \ref{eq:abc} is satisfied. Set $L_{\max}=\max_{i\in[n]}L_i$. Then the iterates of \ref{eq:unifiedsam} with 
    \begin{align*}
        \textstyle
        \r_t=\r\leq\r^*:=\frac{\m}{L_{\max}\left(\m+2[B\m+A](1-\l)^2\right)},\g_t=\g\leq\g^*:=\frac{\m-L_{\max}\r\left(\m+2[B\m+A](1-\l)^2\right)}{2L_{\max}(B\m+A)\left[2L_{\max}^2\r^2(1-\l)^2+1\right]},
    \end{align*}
    satisfy:
    \vspace{-4mm}
    \begin{align*}
        \E[f(x^t)-f(x^*)]\leq(1-\g\m)^t\left[f(x^0)-f(x^*)\right]+N,
    \end{align*}
    where $N=\frac{L_{\max}}{\m}\left(C\g+\r(1+2\g L_{\max}^2\r)\left[\l^2+C(1-\l)^2\right]\right)$.
\end{theorem}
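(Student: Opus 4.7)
My strategy is a perturbation-based descent argument: use the $L_{\max}$-smoothness of $f$ to derive the base descent inequality, treat $g(\tilde{x}^t)$ as a controlled perturbation of $g(x^t)$ via almost-sure Lipschitz continuity of the stochastic gradient, and close the recursion by combining \ref{eq:abc} with PL. The linear contraction with residual $N$ will emerge after carefully tuning the step sizes.

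Concretely, I start from the $L_{\max}$-smoothness descent lemma
\[
f(x^{t+1})\le f(x^t)-\gamma\langle\nabla f(x^t),g(\tilde{x}^t)\rangle+\tfrac{L_{\max}\gamma^2}{2}\|g(\tilde{x}^t)\|^2,
\]
then split $g(\tilde{x}^t)=g(x^t)+[g(\tilde{x}^t)-g(x^t)]$ in the inner product and apply $\|a+b\|^2\le 2\|a\|^2+2\|b\|^2$ to the quadratic term. The perturbation is controlled by Cauchy-Schwarz and the bound $\|g(\tilde{x}^t)-g(x^t)\|\le L_{\max}\|\tilde{x}^t-x^t\|$, which holds almost surely for the standard samplings of \Cref{sec:reform}, together with the explicit identity
\[
\|\tilde{x}^t-x^t\|=\rho\bigl[(1-\lambda)\|g(x^t)\|+\lambda\bigr],
\]
which follows because the two summands defining $\tilde{x}^t-x^t$ are both parallel to $g(x^t)$.

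Next, I take conditional expectation $\E_t[\cdot]=\E[\cdot\mid x^t]$, use unbiasedness $\E_t g(x^t)=\nabla f(x^t)$ to extract a clean $-\gamma\|\nabla f(x^t)\|^2$ contribution from the inner product, and invoke \ref{eq:abc} wherever $\E_t\|g(x^t)\|^2$ appears; the mixed quantity $\|\nabla f(x^t)\|\cdot\E_t\|g(x^t)\|$ is split by Young's inequality into a multiple of $\|\nabla f(x^t)\|^2$ (to be absorbed later by PL) plus a multiple of $\E_t\|g(x^t)\|^2$ (handled again by \ref{eq:abc}). Grouping terms yields a one-step recursion of the form
\[
\E_t[f(x^{t+1})-f(x^*)]\le(1+\gamma\alpha_1)(f(x^t)-f(x^*))-\gamma\alpha_2\|\nabla f(x^t)\|^2+\gamma\alpha_3,
\]
with $\alpha_1,\alpha_2,\alpha_3$ explicit functions of $\rho,\gamma,\lambda,A,B,C,L_{\max}$ (using $f^{\inf}=f(x^*)$, valid under PL). Imposing $\rho\le\rho^*$ makes $\alpha_2>0$; applying PL via $\|\nabla f(x^t)\|^2\ge 2\mu(f(x^t)-f(x^*))$ converts the $\alpha_2$-term into contraction; imposing $\gamma\le\gamma^*$ ensures the net multiplier on $f(x^t)-f(x^*)$ is $(1-\gamma\mu)$. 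Taking total expectation and unrolling the resulting geometric recursion gives the stated bound with $N=\alpha_3/\mu$.

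The main obstacle is algebraic bookkeeping rather than any new conceptual ingredient: three small parameters ($\gamma,\rho,\lambda$) interact with three \ref{eq:abc} constants ($A,B,C$), and the Young weights must be picked so that the coefficient $\alpha_1$ produced by the $2A(f-f^{\inf})$ part of \ref{eq:abc} is strictly dominated by the PL-induced contraction budget extracted from $\alpha_2\|\nabla f(x^t)\|^2$. Tracking how $(1-\lambda)^2$ and $\lambda^2$ propagate through the quadratic bound $\|g(\tilde{x}^t)\|^2\le 2\|g(x^t)\|^2+2L_{\max}^2\rho^2[(1-\lambda)\|g(x^t)\|+\lambda]^2$ and into the precise closed-form thresholds $\rho^*$, $\gamma^*$ and residual $N$ displayed in the theorem requires care but no further tools.
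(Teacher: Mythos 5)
Your proposal follows essentially the same route as the paper's proof: the $L_{\max}$-smoothness descent lemma, the decomposition $g(\tilde{x}^t)=g(x^t)+[g(\tilde{x}^t)-g(x^t)]$ with the perturbation controlled by the a.s.\ Lipschitz bound and Young's inequality (these are exactly the paper's Lemmas \ref{lem:sam-grad} and \ref{lem:sam-prod}), followed by \ref{eq:abc}, the PL inequality, the step-size restrictions that force the contraction factor down to $1-\gamma\mu$, and the geometric-series unrolling giving $N=\alpha_3/\mu$. No substantive difference; your exact identity $\|\tilde{x}^t-x^t\|=\rho[(1-\lambda)\|g(x^t)\|+\lambda]$ is a minor cosmetic variant of the paper's bound on the same quantity.
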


As an immediate corollary of \Cref{thm:unif-sam-pl-abc}, we get the following guarantees for \ref{eq:usam} and \ref{eq:nsam}, when we substitute $\lambda =0$ and $\lambda=1$ respectively.
\begin{corollary}
    Consider the same assumptions as in \Cref{thm:unif-sam-pl-abc}.
    \begin{itemize}[leftmargin=*]
    \setlength{\itemsep}{0pt}
        \item \textbf{USAM:} For \ref{eq:usam} with $\r\leq\frac{\m}{L_{\max}\left(\m+2[B\m+A]\right)}$ and $\g\leq\frac{\m-L_{\max}\r\left(\m+2[B\m+A]\right)}{2L_{\max}(B\m+A)\left[2L_{\max}^2\r^2+1\right]}$, it holds:
        \vspace{-1mm}
        \begin{align*}
            \E[f(x^t)-f(x^*)]\leq(1-\g\m)^t\left[f(x^0)-f(x^*)\right]+\frac{L_{\max}C}{\m}\left(\g+\r(1+2\g L_{\max}^2\r)\right).
        \end{align*}
        \item \textbf{SAM:} For \ref{eq:nsam} with $\r_t=\r\leq\frac{1}{L_{\max}}$ and $\g_t=\g\leq\frac{\m(1-L_{\max}\r)}{2L_{\max}(B\m+A)}$, it holds:
        \vspace{-1mm}
        \begin{align*}
            \E[f(x^t)-f(x^*)]\leq(1-\g\m)^t\left[f(x^0)-f(x^*)\right]+\frac{L_{\max}}{\m}\left(C\g+\r(1+2\g L_{\max}^2\r)\right).
        \end{align*}
    \end{itemize}
\end{corollary}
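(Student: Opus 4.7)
The corollary is a direct specialization of \Cref{thm:unif-sam-pl-abc}, so my plan is to apply that theorem twice, once with $\lambda = 0$ for \ref{eq:usam} and once with $\lambda = 1$ for \ref{eq:nsam}, and simplify. There is no new technical content beyond \Cref{thm:unif-sam-pl-abc}; the entire exercise is tracking how the expressions for $\rho^\ast$, $\gamma^\ast$, and the residual $N$ collapse in these two extreme cases.

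For \ref{eq:usam} I would substitute $\lambda = 0$, which gives $(1-\lambda)^2 = 1$ and $\lambda^2 = 0$. The bound $\rho^\ast = \frac{\mu}{L_{\max}(\mu + 2[B\mu+A](1-\lambda)^2)}$ becomes $\frac{\mu}{L_{\max}(\mu + 2[B\mu+A])}$, and the bound on $\gamma^\ast$ follows by the same substitution into the numerator and denominator. For the residual, the bracketed factor $[\lambda^2 + C(1-\lambda)^2]$ simplifies to $C$, so $N$ reduces to $\frac{L_{\max} C}{\mu}\bigl(\gamma + \rho(1 + 2\gamma L_{\max}^2 \rho)\bigr)$, matching the stated expression.

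For \ref{eq:nsam} I would substitute $\lambda = 1$. This is the cleaner of the two cases because every term containing $(1-\lambda)^2$ vanishes. The bound on $\rho$ collapses to $\frac{\mu}{L_{\max}\mu} = \frac{1}{L_{\max}}$; the denominator of $\gamma^\ast$ loses the $2L_{\max}^2 \rho^2(1-\lambda)^2$ piece and reduces to $2L_{\max}(B\mu + A)$, while the numerator becomes $\mu(1 - L_{\max}\rho)$, giving the stated formula. In $N$, the factor $[\lambda^2 + C(1-\lambda)^2]$ now becomes $1$, which yields $\frac{L_{\max}}{\mu}\bigl(C\gamma + \rho(1 + 2\gamma L_{\max}^2 \rho)\bigr)$.

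Since this is pure specialization, there is no genuine obstacle. The only point that deserves care is the bookkeeping: one must verify that each of the three quantities ($\rho^\ast$, $\gamma^\ast$, $N$) simplifies correctly at both endpoints, and in particular note the structural asymmetry, namely that the $(1-\lambda)^2$ terms drive the dependence on $B\mu + A$ inside $\rho^\ast$ and $\gamma^\ast$ (present for USAM, absent for SAM), while the $\lambda^2$ term controls the irreducible $\rho$-dependent offset in $N$ (present for SAM, absent from the $C$-free part for USAM). This asymmetry recovers the well-known qualitative distinction that deterministic \ref{eq:usam} converges to the exact solution while deterministic \ref{eq:nsam} only converges to a neighborhood, foreshadowed in the introduction and made precise in later discussion.
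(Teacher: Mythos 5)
Your proposal is correct and is exactly what the paper does: the corollary is stated as an immediate consequence of \Cref{thm:unif-sam-pl-abc} by substituting $\lambda=0$ and $\lambda=1$, and all three quantities ($\rho^\ast$, $\gamma^\ast$, $N$) simplify as you describe. No further comment is needed.
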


To the best of our knowledge, all prior convergence results for (stochastic) SAM have relied on the strong assumption of bounded variance, as seen in works like \citet{andriushchenko2022towards}, \citet{si2023practical}, \citet{li2023enhancing} and \citet{harada2024convergence}. In contrast, the above theorem is the first to establish convergence for SAM without relying on this assumption. The closest related works on the convergence of constant step size SAM for PL functions are \citet{shin2024analyzing} and \citet{dai2024crucial}. The former provides a linear convergence rate for \ref{eq:usam} in the \textit{interpolated} regime, while the latter establishes a linear convergence rate for \ref{eq:usam} in the \textit{deterministic} regime. Our result is the first to demonstrate linear convergence in the fully stochastic regime. Additionally, when $\r=0$, \ref{eq:unifiedsam} reduces to SGD, and \Cref{thm:unif-sam-pl-abc} recovers the step sizes and rates (up to constants) of Theorem 4.6 from \citet{gower2021sgd}, demonstrating the tightness of our results. 

Recall that with our general analysis via the arbitrary sampling framework, the proposed convergence guarantees hold for the $\tau$-nice sampling as well (see \Cref{sec:reform}). As such, a special case of \Cref{thm:unif-sam-pl-abc} is the case where $|S|=n$ with probability one. That is, we run \ref{eq:unifiedsam-mb} with a full batch, $\tau=n$. In this scenario, using \Cref{prop:spec-sampling-convex}, the \ref{eq:abc} condition holds with $A=0$, $B=1$ and $C=0$ and \Cref{thm:unif-sam-pl-abc} simplifies as follows. 

\begin{corollary}[Deterministic SAM]
    \label{cor:pl-det}
    Assume that each $f_i$ is $L_i$-smooth and $f$ is $\mu$-PL. Then the iterates of the deterministic \ref{eq:unifiedsam} with $\r\leq\frac{1}{L_{\max}\left(1+2(1-\l)^2\right)}\text{ and }\g\leq\frac{1-L_{\max}\r\left(1+2(1-\l)^2\right)}{2L_{\max}\left[2L_{\max}^2\r^2(1-\l)^2+1\right]}$ satisfy:
    \vspace{-2mm}
    \begin{align*}
        \E[f(x^t)-f(x^*)]\leq(1-\g\m)^t\left[f(x^0)-f(x^*)\right]+\frac{L_{\max}\r(1+2\g L_{\max}^2\r)\l^2}{\m}.
    \end{align*}
\end{corollary}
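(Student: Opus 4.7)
The plan is to derive \Cref{cor:pl-det} as a direct specialization of \Cref{thm:unif-sam-pl-abc}, with the only genuine content being the verification that in the full-batch regime the \ref{eq:abc} constants take the values $A=0$, $B=1$, $C=0$. First I would invoke the stochastic reformulation from \Cref{sec:reform} and note that deterministic SAM corresponds to the $\tau$-nice sampling with $\tau=n$, which deterministically yields $v=(1,\dots,1)$ and hence $g(x)=\nabla f(x)$. Consequently $\mathbb{E}_{\mathcal{D}}\|g(x)\|^2 = \|\nabla f(x)\|^2$, which fits \ref{eq:abc} exactly with $A=0$, $B=1$, $C=0$; this is the case of \Cref{prop:spec-sampling-convex} that the corollary cites.

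Next I would substitute these constants into the step-size thresholds $\rho^*$ and $\gamma^*$ from \Cref{thm:unif-sam-pl-abc}. The factor $B\mu + A$ collapses to $\mu$, so
\begin{align*}
    \rho^* = \frac{\mu}{L_{\max}\bigl(\mu + 2\mu(1-\lambda)^2\bigr)} = \frac{1}{L_{\max}\bigl(1+2(1-\lambda)^2\bigr)},
\end{align*}
and, after cancelling a factor of $\mu$ in numerator and denominator,
\begin{align*}
    \gamma^* = \frac{1 - L_{\max}\rho\bigl(1+2(1-\lambda)^2\bigr)}{2L_{\max}\bigl[2L_{\max}^2\rho^2(1-\lambda)^2+1\bigr]},
\end{align*}
which matches the step sizes stated in \Cref{cor:pl-det}.

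Finally I would specialize the additive neighbourhood term $N$. With $C=0$ the expression
$N=\tfrac{L_{\max}}{\mu}\bigl(C\gamma + \rho(1+2\gamma L_{\max}^2\rho)[\lambda^2 + C(1-\lambda)^2]\bigr)$
immediately simplifies to
\begin{align*}
    N = \frac{L_{\max}\,\rho\,(1+2\gamma L_{\max}^2\rho)\,\lambda^2}{\mu},
\end{align*}
reproducing the claimed neighbourhood. Plugging this $N$ into the conclusion of \Cref{thm:unif-sam-pl-abc} yields exactly the linear rate $(1-\gamma\mu)^t[f(x^0)-f(x^*)]+N$ stated in the corollary.

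There is essentially no obstacle here: the only step requiring care is confirming that the full-batch sampling genuinely satisfies \ref{eq:abc} with the constants $(0,1,0)$, which is routine since $g(x)=\nabla f(x)$ deterministically. In particular, note that when $\lambda = 1$ (i.e., deterministic \ref{eq:nsam}) the neighbourhood term $N = L_{\max}\rho(1+2\gamma L_{\max}^2\rho)/\mu$ does not vanish, recovering the observation that deterministic SAM only converges to a neighbourhood, whereas when $\lambda = 0$ (i.e., deterministic \ref{eq:usam}) the $\lambda^2$ factor makes $N=0$ and we obtain exact linear convergence — matching the qualitative distinction emphasized after \Cref{thm:unif-sam-pl-abc}.
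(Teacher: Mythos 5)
Your proposal is correct and follows exactly the paper's route: specialize \Cref{thm:unif-sam-pl-abc} to the full-batch case where $g(x)=\nabla f(x)$, so \ref{eq:abc} holds with $A=0$, $B=1$, $C=0$, and the step-size thresholds and neighbourhood term simplify as you compute. Your direct observation that $\E_{\mathcal{D}}\|g(x)\|^2=\|\nabla f(x)\|^2$ in the deterministic regime is in fact slightly cleaner than citing \Cref{prop:spec-sampling-convex}, since it avoids that proposition's $x^*$-convexity hypothesis, which the corollary does not assume.
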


First, observe that the PL parameter $\m$ no longer appears in the step sizes $\r$ and $\g$. Additionally, when $\l=0$, i.e. \ref{eq:usam}, the method converges to the exact solution at a linear rate. However, for $\l>0$, and in particular when $\l=1$ (\ref{eq:nsam}), convergence is only up to a neighborhood. This suggests that even in the deterministic setting, \ref{eq:nsam} does not fully converge to the minimum. This was first investigated in \citet{si2023practical} and a similar result appear in \citet{dai2024crucial}. We illustrate this phenomenon experimentally in \Cref{sec:numpy}. 

Finally, as an extension of \Cref{thm:unif-sam-pl-abc}, we also show how to obtain convergence to exact solution with an $O(1/t)$ rate for \ref{eq:unifiedsam} using decreasing step sizes. 

\begin{theorem}
    \label{thm:unif-sam-pl-abc-dec}
    Assume that each $f_i$ is $L_i$-smooth, $f$ is $\mu$-PL and the \ref{eq:abc} is satisfied. Then the iterates of \ref{eq:unifiedsam} with $\r_t=\min\left\{\frac{1}{2t+1},\r^*\right\}$ and $\g_t=\min\left\{\frac{2t+1}{(t+1)^2\m},\g^*\right\}$, where $\r^*$ and $\g^*$ are defined in \Cref{thm:unif-sam-pl-abc}, satisfy: $E[f(x^t)-f(x^*)]\leq O\left(\frac{1}{t}\right)$. 
\end{theorem}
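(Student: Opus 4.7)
The plan is to extract the one-step recursion implicit in the proof of \Cref{thm:unif-sam-pl-abc}, substitute the proposed decreasing schedule, and apply the standard ``multiply by $(t+1)^2$ and telescope'' trick. Concretely, the constant step-size analysis gives (for any fixed $\g\leq\g^*$, $\r\leq\r^*$) a per-iteration inequality
\[
r_{t+1}\leq(1-\g_t\m)\,r_t+E(\g_t,\r_t),\qquad r_t:=\E[f(x^t)-f(x^*)],
\]
with per-step residual $E(\g,\r):=L_{\max}\g\bigl(C\g+\r(1+2\g L_{\max}^2\r)[\l^2+C(1-\l)^2]\bigr)$; indeed, summing this geometrically with constant $\g,\r$ recovers exactly the neighborhood $N$ of \Cref{thm:unif-sam-pl-abc}. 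Since $\g_t$ and $\r_t$ are defined as minima against $\g^*$ and $\r^*$, this recursion is in force at every $t$.

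Next I would identify a threshold $t_0\geq 1$ such that for all $t\geq t_0$ the decreasing expressions dominate the $\min$'s, i.e.\ $\g_t=\tfrac{2t+1}{(t+1)^2\m}$ and $\r_t=\tfrac{1}{2t+1}$. For $t<t_0$ the iteration is a constant step-size run, so \Cref{thm:unif-sam-pl-abc} already yields a uniform bound $r_{t_0}\leq R_0$ with $R_0$ a fixed constant (e.g.\ $R_0\leq r_0+N^*$). For $t\geq t_0$ the key algebraic identities are
\[
1-\g_t\m=\frac{t^2}{(t+1)^2},\qquad \g_t\r_t=\frac{1}{(t+1)^2\m},\qquad \g_t^2\r_t^2=\frac{1}{(t+1)^4\m^2},
\]
together with $\g_t^2\leq \tfrac{9}{(t+1)^2\m^2}$ (since $(2t+1)^2\leq 9(t+1)^2$). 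Plugging these into $E(\g_t,\r_t)$ gives $E(\g_t,\r_t)\leq \tfrac{K}{(t+1)^2}$ for an explicit constant $K=K(L_{\max},\m,\l,C)$ independent of $t$.

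The recursion then becomes $r_{t+1}\leq \tfrac{t^2}{(t+1)^2}r_t+\tfrac{K}{(t+1)^2}$. Multiplying through by $(t+1)^2$ and setting $a_t:=t^2 r_t$ gives $a_{t+1}\leq a_t+K$, which telescopes from $t_0$ to $t-1$:
\[
t^2 r_t\leq t_0^2 R_0+K(t-t_0)\quad\Longrightarrow\quad r_t\leq \frac{t_0^2 R_0}{t^2}+\frac{K}{t}=O\!\left(\frac{1}{t}\right).
\]

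The main obstacle is not the telescoping itself but the bookkeeping needed to confirm that every summand of $E(\g_t,\r_t)$ -- in particular the cross term involving $2\g_t L_{\max}^2\r_t$ -- is dominated by a uniform multiple of $1/(t+1)^2$. The specific pairing $\g_t\r_t=\tfrac{1}{(t+1)^2\m}$ is precisely what makes the chosen schedule work; any other polynomial pair would shift the exponent in the final rate. A minor additional check is that both $\g_t\leq\g^*$ and $\r_t\leq\r^*$ activate simultaneously from $t_0$ onward, ensuring the hypotheses of \Cref{thm:unif-sam-pl-abc} remain in force uniformly in the tail.
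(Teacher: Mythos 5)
Your proposal is correct and follows essentially the same route as the paper's own proof: both start from the one-step recursion of \Cref{thm:unif-sam-pl-abc} (valid for all $t$ since $\g_t\leq\g^*$, $\r_t\leq\r^*$), pass to a tail index beyond which the decreasing expressions realize the minima, use the identities $1-\g_t\m=\tfrac{t^2}{(t+1)^2}$ and $\g_t\r_t=\tfrac{1}{(t+1)^2\m}$ to bound the residual by a constant over $(t+1)^2$, and conclude by multiplying by $(t+1)^2$ and telescoping. The only differences are cosmetic (e.g.\ the constant in $(2t+1)^2\leq c\,(t+1)^2$ and how the pre-threshold iterates are bounded).
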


The detailed expression hidden under the big $O$ notation can be found in \Cref{sec:proofs}. A similar result appears in \citet{andriushchenko2022towards} where they provide decreasing step size selection for \ref{eq:usam} for PL functions and prove a convergence rate of $O(1/t)$. However, their result relies on the additional assumption of bounded variance. In contrast, our theorem does not require this assumption and is valid for any $\l\in[0,1]$. Notably, to the best of our knowledge, this is the first decreasing step size result for \ref{eq:nsam}. 

\subsection{General non-convex functions}
\label{sec:non-convex}

In this section, we remove the PL assumption and work with general non-convex functions. First, we start with a general proposition that upper bounds the quantity $\min_{t=0,\dots,T-1}\E\|\nabla f(x^t)\|^2$, which can serve as an intermediate result for \Cref{thm:unif-sam-abc-general}.  Our approach follows a similar derivation to the analysis of SGD in the same setting by \citet{khaled2020better}.

\begin{proposition}
    \label{prop:unif-sam-abc-general}
    Assume that each $f_i$ is $L_i$-smooth and the \ref{eq:abc} is satisfied. Then the iterates of \ref{eq:unifiedsam} with $\r\leq\min\left\{\frac{1}{4L_{\max}},\frac{1}{8BL_{\max}(1-\l)^2}\right\}$ and $\g\leq\frac{1}{8BL_{\max}}$, satisfy:
    \begin{align*}
        \min_{t=0,\dots,T-1}\E\|\nabla f(x^t)\|^2
        &\leq\frac{2\left(1+2A\g L_{\max}\left[\r(1-\l)^2(1+2\g\r L_{\max}^2)+\g\right]\right)^T}{T\g}[f(x^0)-f^{\inf}]\\
        &\q+2L_{\max}\left[C\g+\r(1+2\g\r L_{\max}^2)(\l^2+C(1-\l)^2)\right].
    \end{align*}
\end{proposition}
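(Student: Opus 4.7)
The plan is to combine an $L_{\max}$-smoothness descent inequality with the Unified SAM perturbation bound $\|\tilde x^t - x^t\| = \r((1-\l)\|g(x^t)\| + \l)$, invoke \ref{eq:abc} to replace the stochastic-gradient second moment by quantities we can control, and then telescope a one-step recursion on $\E[f(x^t) - f^{\inf}]$.

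Starting from $L_{\max}$-smoothness of $f$ applied to $x^{t+1} = x^t - \g\, g(\tilde x^t)$,
\[
f(x^{t+1}) \leq f(x^t) - \g\langle \nabla f(x^t), g(\tilde x^t)\rangle + \frac{L_{\max}\g^2}{2}\|g(\tilde x^t)\|^2.
\]
Conditioning on $x^t$ and decomposing $g(\tilde x^t) = g(x^t) + (g(\tilde x^t) - g(x^t))$, the unbiasedness $\E_t g(x^t) = \nabla f(x^t)$ together with a carefully weighted Young inequality reduces the cross term to $-\frac{\g}{2}\|\nabla f(x^t)\|^2$ plus a nonnegative remainder, while the quadratic term is split via $\|g(\tilde x^t)\|^2 \leq 2\|g(x^t)\|^2 + 2\|g(\tilde x^t) - g(x^t)\|^2$. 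Because each $f_i$ is $L_i$-smooth with $L_i \leq L_{\max}$, the stochastic gradient $g$ is $L_{\max}$-Lipschitz on every realisation of the sampling vector, hence $\|g(\tilde x^t) - g(x^t)\|^2 \leq L_{\max}^2\|\tilde x^t - x^t\|^2 \leq 2L_{\max}^2\r^2[(1-\l)^2\|g(x^t)\|^2 + \l^2]$. Collecting terms leaves an inequality of the schematic form
\[
\E_t[f(x^{t+1})] \leq f(x^t) - \frac{\g}{2}\|\nabla f(x^t)\|^2 + K_g\,\E_t\|g(x^t)\|^2 + K_{\l},
\]
where $K_g$ and $K_{\l}$ are explicit polynomials in $\g, \r, \l, L_{\max}$.

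Next, \ref{eq:abc} converts $\E_t\|g(x^t)\|^2$ into $2A[f(x^t) - f^{\inf}] + B\|\nabla f(x^t)\|^2 + C$. The step size restrictions $\r \leq \min\{\tfrac{1}{4L_{\max}}, \tfrac{1}{8BL_{\max}(1-\l)^2}\}$ and $\g \leq \tfrac{1}{8BL_{\max}}$ are tuned so that $BK_g\,\|\nabla f(x^t)\|^2$ gets absorbed into the $-\frac{\g}{2}\|\nabla f(x^t)\|^2$ contribution, leaving a one-step recursion $\E[f(x^{t+1}) - f^{\inf}] \leq (1+\alpha)\E[f(x^t) - f^{\inf}] - \frac{\g}{2}\E\|\nabla f(x^t)\|^2 + E$, where matching coefficients yields $\alpha = 2A\g L_{\max}[\r(1-\l)^2(1+2\g\r L_{\max}^2) + \g]$ and $E$ equal to $L_{\max}$ times the bracketed additive expression in the statement. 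Rearranging as $\frac{\g}{2}\E\|\nabla f(x^t)\|^2 \leq (1+\alpha)\E[f(x^t) - f^{\inf}] - \E[f(x^{t+1}) - f^{\inf}] + E$ and summing $t=0, \dots, T-1$, the accumulated $\alpha\sum_t \E[f(x^t) - f^{\inf}]$ is controlled via the one-step inequality $a_{t+1} \leq (1+\alpha)a_t + E$ with $a_t := \E[f(x^t) - f^{\inf}]$, so that the full sum on the left inherits only the leading $(1+\alpha)^T[f(x^0) - f^{\inf}]$ factor plus a $T$-proportional residual. Dividing by $T$ and using $\sum_t\E\|\nabla f(x^t)\|^2 \geq T\cdot\min_t\E\|\nabla f(x^t)\|^2$ yields the claim.

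The main obstacle I expect is the constant bookkeeping across the descent lemma and the \ref{eq:abc} substitution. To land on the advertised $\alpha$ and $E$ (with a single $\r$ rather than $\r^2$ in the leading pieces of each), one must choose the Young weights in the cross term so that $\r$ enters linearly, and then verify that the two-part step size restriction controls $BK_g$ summand-by-summand. The telescoping is standard in spirit, but one has to be careful that the factor $(1+\alpha)^T$ only attaches to $[f(x^0) - f^{\inf}]$ and not to the additive error, which is why rearranging as $\frac{\g}{2}\|\nabla f\|^2 \leq (1+\alpha)a_t - a_{t+1} + E$ before summing (rather than unrolling the recursion directly) is important.
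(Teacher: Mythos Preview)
Your descent derivation and the use of \ref{eq:abc} track the paper's argument closely (the paper packages the cross--term and squared--norm estimates as Lemmas~\ref{lem:sam-prod} and~\ref{lem:sam-grad}, with the Young weight $\beta=L_{\max}\r$ producing the linear $\r$ you anticipate, and then verifies term--by--term that the four summands in $1-\tfrac{L_{\max}\r}{2}-BL_{\max}[\dots]$ are each at most $1/8$). The structural outline is correct.

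The one genuine gap is the telescoping. Summing $\tfrac{\g}{2}r_t\le(1+\a)a_t-a_{t+1}+E$ uniformly over $t$ does \emph{not} keep the factor $(1+\a)^T$ off the additive error: after summing you are left with $a_0-a_T+\a\sum_t a_t+TE$, and controlling $\a\sum_t a_t$ via the recursion $a_{t+1}\le(1+\a)a_t+E$ produces a contribution $E\cdot\tfrac{(1+\a)^T-1}{\a}$, not $TE$. So the residual you end up with after dividing by $T$ is $E\cdot\tfrac{(1+\a)^T-1}{\a T}$, which is not bounded by a constant multiple of $E$ without an additional hypothesis like $\a T\le 1$ (imposed only later in \Cref{thm:unif-sam-abc-general}, not in this proposition). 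Your stated rationale---``rearranging before summing'' rather than ``unrolling the recursion directly''---does not cure this; both routes incur the same penalty.

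The paper handles this with a weighted telescope (Lemma~\ref{lem:descent}): multiply the one--step inequality by $(1+\a)^{-t}$ before summing. Then $(1+\a)^{-(t-1)}a_t-(1+\a)^{-t}a_{t+1}$ telescopes exactly, and the additive error becomes $E\sum_t(1+\a)^{-t}$. Dividing the whole inequality by $G:=\sum_t(1+\a)^{-t}\ge T(1+\a)^{-(T-1)}$ yields $\tfrac{(1+\a)^T}{T}\cdot\tfrac{2}{\g}[f(x^0)-f^{\inf}]$ on the initial term and a bare $E=2L_{\max}[C\g+\r(1+2\g\r L_{\max}^2)(\l^2+C(1-\l)^2)]$ on the second, exactly as stated. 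With that fix your plan is complete and matches the paper.
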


In order to control the coefficient of the term $f(x^0)-f^{\inf}$ of \Cref{prop:unif-sam-abc-general} from exploding we need to carefully select the step sizes $\r$ and $\g$. 
This is what the following \Cref{thm:unif-sam-abc-general} achieves. 

\begin{theorem}
    \label{thm:unif-sam-abc-general}
    Let $\e>0$ and set $\d_0=f(x_0)-f^{\inf}\geq0$. For $\r=\overline{\r}$ and $\g=\overline{\g}$,\footnote{For the precise expressions of $\overline{\r}$ and $\overline{\g}$ we refer to \Cref{thm:unif-sam-abc-general-app}.} provided that 
    \begin{align*}
        T\geq\frac{\d_0L_{\max}}{\e^2}\max
        &\left\{96B,24(1-\l)\sqrt{3L_{\max}A},\frac{5184L_{\max}A^2(1-\l)^4\d_0}{\e^2},\frac{864\d_0A}{\e^2},\frac{144C}{\e^2},\right.\\
        &\left.\frac{288L_{\max}^2(1-\l)^2}{\e^2}\right\}
    \end{align*}
    the iterates of \ref{eq:unifiedsam} satisfy $\min_{t=0,\dots,T-1}\E\|\nabla f(x^t)\|\leq\e$. 
\end{theorem}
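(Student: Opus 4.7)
The plan is to instantiate \Cref{prop:unif-sam-abc-general} with a concrete choice of constant step sizes $\rho = \overline{\rho}$ and $\gamma = \overline{\gamma}$, and then invert the resulting inequality to determine how large $T$ must be to force $\min_{t<T}\E\|\nabla f(x^t)\|^2 \leq \e^2$. The final conclusion on $\E\|\nabla f(x^t)\|$ then follows by Jensen's inequality ($\E X \leq \sqrt{\E X^2}$) and the monotonicity of the square root. The starting bound has the schematic form
\[
\min_{t<T}\E\|\nabla f(x^t)\|^2 \leq \frac{2 K^T \d_0}{T\gamma} + 2L_{\max}\bigl[C\gamma + \rho(1+2\gamma\rho L_{\max}^2)(\lambda^2 + C(1-\lambda)^2)\bigr],
\]
with $K = 1 + 2AL_{\max}\gamma[\rho(1-\lambda)^2(1+2\gamma\rho L_{\max}^2)+\gamma]$. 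Using the elementary inequality $(1+x)^T \leq e^{xT}$, the prefactor $K^T$ is bounded by an absolute constant (say $\sqrt{e}$) whenever $(K-1)T$ stays $O(1)$; this is the first target.

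Next, I would retain the step-size constraints already imposed by \Cref{prop:unif-sam-abc-general}, namely $\rho \leq \min\{1/(4L_{\max}), 1/(8BL_{\max}(1-\lambda)^2)\}$ and $\gamma \leq 1/(8BL_{\max})$, and additionally require $2\gamma\rho L_{\max}^2 \leq 1$ so that the recurring factors $(1+2\gamma\rho L_{\max}^2)$ are absorbed into universal constants. To control the exponential prefactor I would impose $AL_{\max}\gamma^2 T = O(1)$ and $AL_{\max}\gamma\rho(1-\lambda)^2 T = O(1)$. To kill the noise floor I would ask that $L_{\max}C\gamma$, $L_{\max}\rho\lambda^2$, and $L_{\max}\rho(1-\lambda)^2(L_{\max}^2+C)$ all be at most a small multiple of $\e^2$. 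Finally the $\d_0$ term forces $T\gamma \geq \Omega(\d_0/\e^2)$. Defining $\overline{\gamma}$ and $\overline{\rho}$ as the minima of all the above bounds and solving for $T$ recovers the displayed $\max\{\cdot\}$: $96B$ comes from the smoothness cap on $\gamma$, $144C/\e^2$ from the $C\gamma$ term, $288L_{\max}^2(1-\lambda)^2/\e^2$ from the $(1-\lambda)^2$ part of the noise term on $\rho$, the two $A$-dependent terms from the exponential prefactor constraints on $\gamma^2$ and $\gamma\rho(1-\lambda)^2$, and $24(1-\lambda)\sqrt{3L_{\max}A}$ from enforcing $\gamma\rho(1-\lambda)^2 \leq O(1/(AL_{\max}T))$ together with the other $\gamma,\rho$ caps simultaneously.

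The main obstacle is the bookkeeping forced by the multiplicative coupling between $\gamma$ and $\rho$: they appear together both in the exponential prefactor (via $\gamma\rho(1-\lambda)^2$) and in the SAM-specific noise floor (via $1+2\gamma\rho L_{\max}^2$), so $\overline{\gamma}$ and $\overline{\rho}$ must be chosen simultaneously rather than independently, and universal constants must be tracked through a nested $\min$ of half a dozen constraints. A secondary sanity check is that the two boundary cases reduce correctly: when $\lambda=1$ (SAM) the $(1-\lambda)$-factor constraints become vacuous but the $\lambda^2$-noise constraint remains active and pins $\rho = O(\e^2/L_{\max})$, while for $\lambda=0$ (USAM) the $\lambda^2$-term vanishes and only the $(1-\lambda)^2$ constraints survive. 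Once these choices are fixed, the dominant bottleneck scales as $\overline{\gamma} = \Theta(\e^2)$ and the overall rate $T = O(\d_0 L_{\max}/\e^4)$ emerges, matching the state of the art for nonconvex SGD under \ref{eq:abc}.
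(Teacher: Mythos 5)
Your proposal follows essentially the same route as the paper's proof: instantiate \Cref{prop:unif-sam-abc-general}, bound the geometric prefactor via $1+x\leq e^x$ by forcing $2TA\gamma L_{\max}[\rho(1-\lambda)^2(1+2\gamma\rho L_{\max}^2)+\gamma]\leq 1$, split the noise floor into the $C\gamma$, $\rho\lambda^2$, and $\rho(1-\lambda)^2$ pieces each at most a fraction of $\e^2$, and then read off $T\geq 12\d_0/(\gamma\e^2)$ for each cap on $\gamma$ to assemble the displayed maximum. Your attribution of each term in the $\max$ to its originating constraint matches the paper's bookkeeping (the paper additionally imposes $\rho\leq 1/\sqrt{T}$ to decouple the $\rho^2$ factor in the exponential constraint, which plays the role of your $2\gamma\rho L_{\max}^2\leq 1$ absorption), and your explicit appeal to Jensen's inequality for the final step is left implicit in the paper.
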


The results in \Cref{prop:unif-sam-abc-general} and \Cref{thm:unif-sam-abc-general} are tight, as setting $\r=0$ \ref{eq:unifiedsam} reduces in SGD and these simplify to the step sizes and rates (up to constants) of Theorem 2 and Corollary 1 from \citet{khaled2020better}. Other results for general non-convex functions can be found in \citet{mi2022make}, \citet{zhuang2022surrogate} and \citet{li2023enhancing} for \ref{eq:nsam} and in \citet{andriushchenko2022towards} for \ref{eq:usam}. However, as mentioned earlier, all these analyses rely on the strong assumption of bounded variance and/or bounded gradients. In contrast, our result uses weaker assumptions and offers guarantees for any $\l\in[0,1]$. Furthermore, \citet{khanh2024fundamental} have results for general non-convex functions in the deterministic setting though all their results are asymptotic. Another closely related work is \citet{nam2023almost}, where they also assume the expected residual condition, however, they additionally assume bounded gradient of $f$ and their results are only asymptotic and hold almost surely. 
\vspace{-2mm}
\subsection{Beyond Uniform Sampling: Importance Sampling}
\label{sec:importance}
In this work, we provide theorems in the PL and non-convex regime through which we can analyze all importance sampling and mini-batch variants of \ref{eq:unifiedsam} (and as a result of \ref{eq:usam} and \ref{eq:nsam}). We achieve that via our general arbitrary sampling analysis. In this section, we focus on the non-convex regime (\Cref{thm:unif-sam-abc-general}) and provide importance sampling for \ref{eq:unifiedsam} with single-element sampling. That is, we select probabilities that optimize the iteration complexity (lower bound of $T$). In particular, to derive the importance sampling probabilities, we substitute the bounds for $A$, $B$, and $C$ from \Cref{prop:spec-sampling} into  \Cref{thm:unif-sam-abc-general}, resulting in: 
\begin{align*}
    \textstyle
    T\geq\frac{\d_0L_{\max}}{\e^2}\max
    &\left\{24(1-\l)\sqrt{\frac{3L_{\max}}{n\t}\max_i\frac{L_i}{p_i}},\frac{\frac{5184L_{\max}}{n^2\t^2}(\max_i\frac{L_i}{p_i})^2(1-\l)^4\d_0}{\e^2},\right.\\
    &\left.\frac{\frac{864\d_0}{n\t}\max_i\frac{L_i}{p_i}}{\e^2},\frac{\frac{288}{n\t}\max_i\frac{L_i}{p_i}\s^*}{\e^2},\frac{288L_{\max}^2(1-\l)^2}{\e^2}\right\}
\end{align*}
Having substituted the values of $A$, $B$, and $C$ in the above expression, to obtain the best bound, we require optimizing the quantity $\max_i\frac{L_i}{np_i}$ over the probabilities $p_i$. This results in $p_i=\frac{L_i}{\sum_{j=1}^nL_j}$. Similar probabilities have been proposed for several optimization algorithms, including SGD \citep{gower2019sgd, gower2021sgd, khaled2020better}, variance-reduced methods \citep{khaled2023unified}, and methods for min-max optimization \citep{gorbunov2022stochastic, loizou2020stochastic, choudhury2024single, beznosikov2023stochastic}. To the best of our knowledge, our work is the first to provide importance sampling for SAM algorithms. 

\vspace{-3mm}
\section{Numerical Experiments}
\label{sec:exps}

In this section, we evaluate our proposed step sizes for \ref{eq:unifiedsam} on both deterministic and stochastic PL problems, with experiments designed to illustrate our theoretical findings. Additionally, we explore different values of $\l$ when training deep neural networks to improve accuracy. 

\subsection{Validation of the theory}
\label{sec:numpy}

In this part, we empirically validate our theoretical results and illustrate the main properties of \ref{eq:unifiedsam} that our theory suggests in \Cref{sec:convergence}. In these experiments, we focus on $\ell_2$-regularized regression problems (problems with strongly convex objective $f$ and components $f_i$ and thus PL) and we evaluate the performance of \ref{eq:unifiedsam} on synthetic data. The loss function of the $\ell_2$-regularized \textit{ridge regression} is given by 
\vspace{-2mm}
\begin{align*}
    f(x)=\frac{1}{2}\|\boldsymbol{A}x-b\|^2+\l_r\|x\|^2=\frac{1}{2n}\sum_{i=1}^n\left(\boldsymbol{A}[i,:]x-b_i\right)^2+\frac{\l_r}{2}\|x\|^2
\end{align*}
and the loss function of the $\ell_2$-regularized \textit{logistic regression} is given by 
\vspace{-2mm}
\begin{align*}
    f(x)=\frac{1}{2n}\sum_{i=1}^n\log\left(1+\exp\left(-b_i\boldsymbol{A}[i,:]x\right)\right)+\frac{\l_r}{2}\|x\|^2.
\end{align*}
In both problems $\boldsymbol{A}\in\R^{n\times d}$, $b\in\R^n$ are the given data and $\l_r\geq0$ is the regularization parameter.

\begin{wrapfigure}{r}{0.4\textwidth}
    \begin{minipage}{0.4\textwidth}
        \begin{figure}[H]
            \centering
            \vspace{-5mm}
            \includegraphics[width=0.9\textwidth]{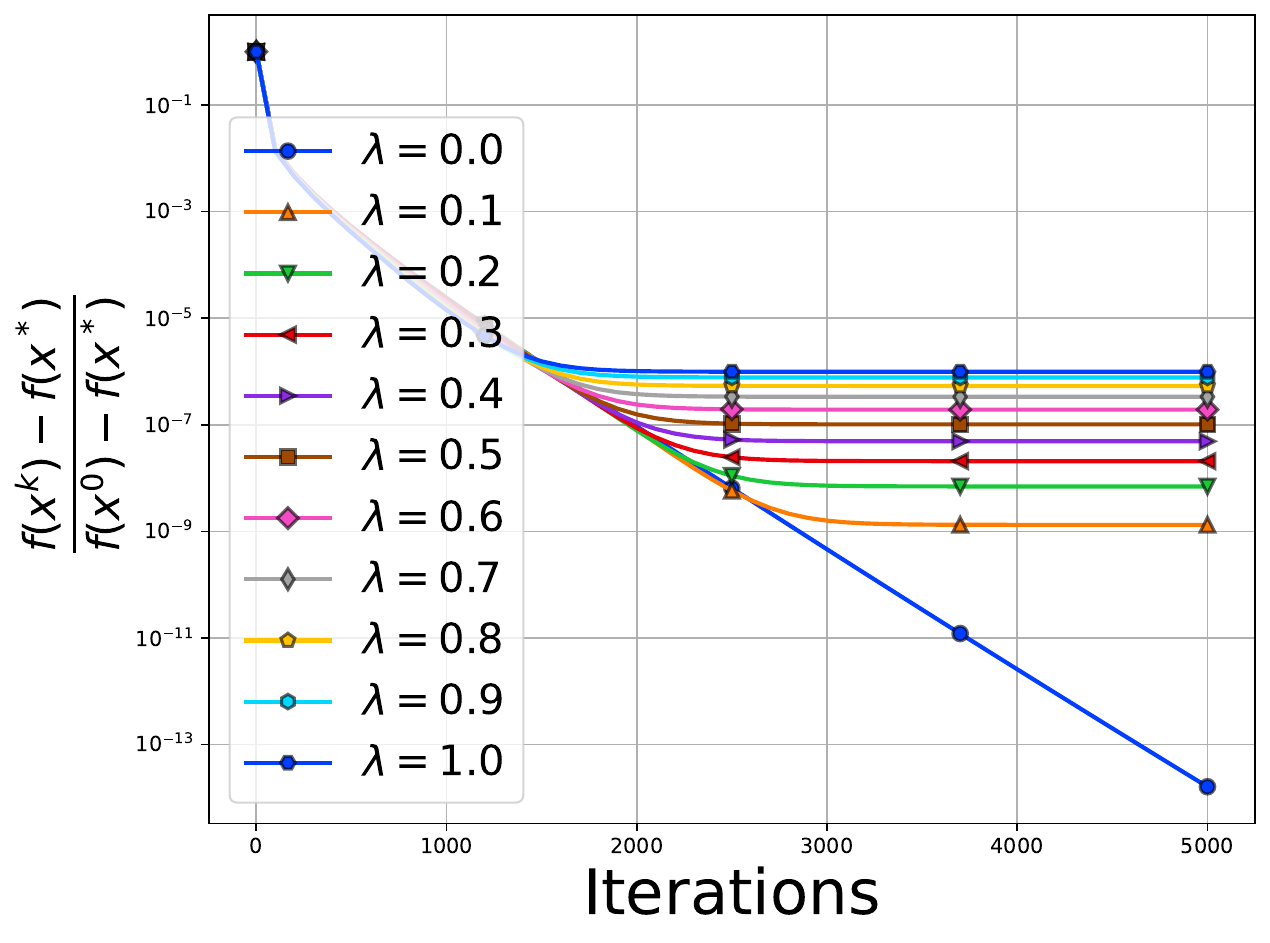}
            \caption{Deterministic \ref{eq:unifiedsam} for various values of $\l$ applied to the ridge regression problem. \ref{eq:usam} ($\l=0$) converges to the exact solution while the other variants $\l>0$ converge to a neighborhood of the solution.}
            \label{fig:det}
        \end{figure}
    \end{minipage}
\end{wrapfigure}

\textbf{Normalized SAM converges only in neighborhood.}
In \Cref{cor:pl-det} we highlight that our theoretical results indicate that in the deterministic case, \ref{eq:usam} achieves linear convergence to the exact solution. In contrast, when $\l>0$, and specifically for \ref{eq:nsam} ($\lambda = 1$), convergence is only to a neighborhood of the solution, an unusual outcome for deterministic optimization methods. We validate this observation experimentally in \Cref{fig:det}. We run a ridge regression problem with $n=100$, $d=100$, $\l_r=0$. The matrix $\boldsymbol{A}$ has been generated according to \citet{lenard1984randomly} such that the condition number of $A$ is $10$ and the vector $b$ has been sampled from the standard Gaussian distribution. We have used the deterministic \ref{eq:unifiedsam} for $\l=0.0,\dots,1.0$ and we run each algorithm for $50$ epochs. Indeed we can see that \ref{eq:usam} ($\l=0$) converges all the way to the exact solution while the other choices of $\l$ converge to a neighborhood. It is also noteworthy that the neighborhood increases as $\l$ approaches $1$. 

\textbf{Constant vs Decreasing Step size.}
In this part, we compare the performance of \ref{eq:unifiedsam} under both constant and decreasing step size regimes, as described in \Cref{thm:unif-sam-pl-abc} and \Cref{thm:unif-sam-pl-abc-dec}, respectively. For this experiment, we consider a logistic regression task with $n=100$, $d=100$, and $\lambda_r=3/n$. As before the matrix $\boldsymbol{A}$ has been generated such that its condition number is $10$ and the vector $b$ has been sampled from the standard Gaussian distribution. We run \ref{eq:unifiedsam} with $\l=0.0,0.5,1.0$, using uniform single-element sampling for $10,000$ epochs across $5$ trials. The average results of these trials, along with one standard deviation, are shown in \Cref{fig:dec}. Initially, the trajectory of the decreasing step size follows that of the constant step size. However, as the constant step size version of \Cref{thm:unif-sam-pl-abc} approaches a neighborhood near optimality, it stagnates. In contrast, the decreasing step size from \Cref{thm:unif-sam-pl-abc-dec} allows for continued improvement, leading to better overall accuracy. 

\begin{figure}[H]
\vspace{-2mm}
	\centering
    \begin{subfigure}{0.3\columnwidth}
        \includegraphics[width=\textwidth]{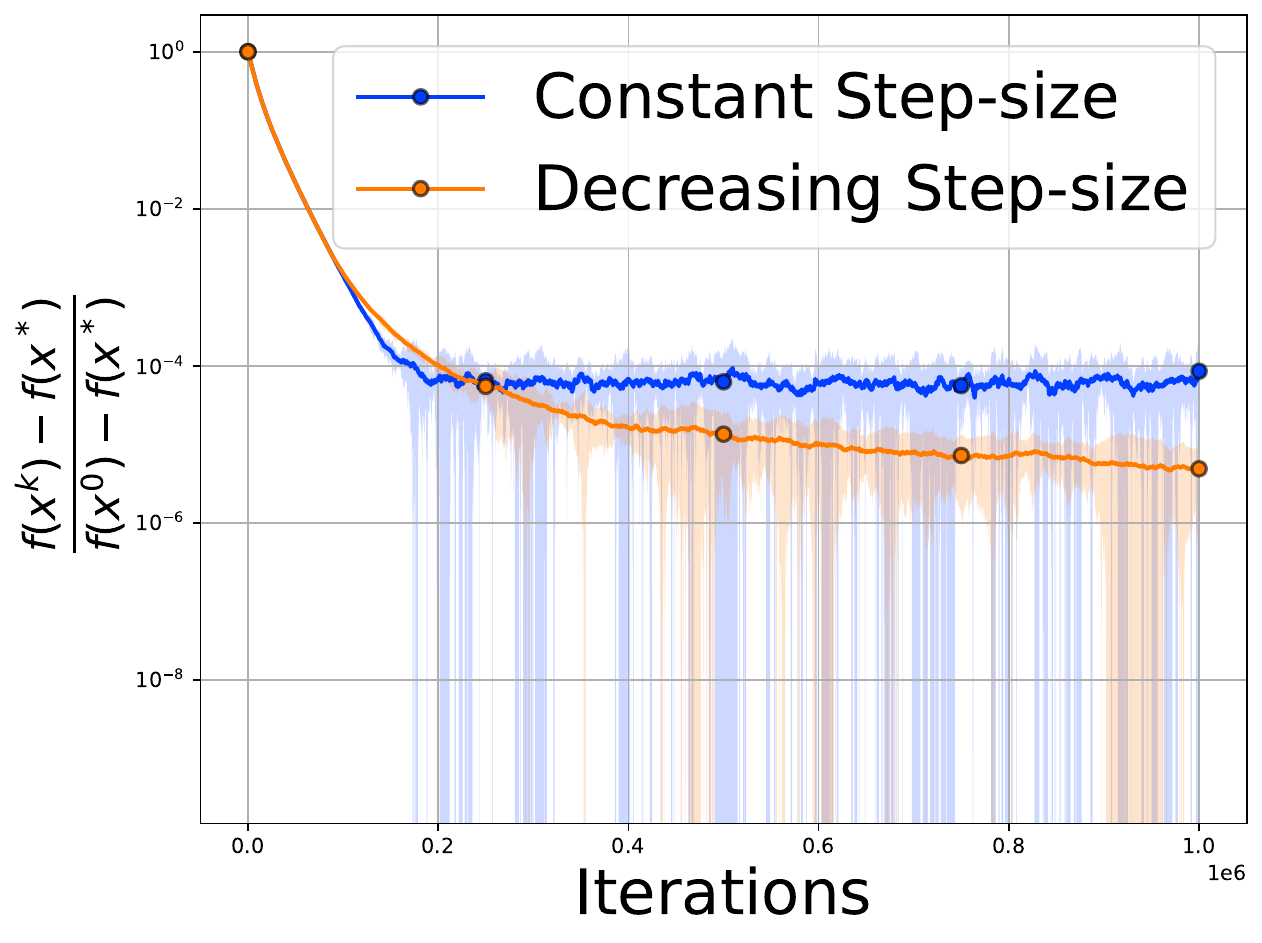}
    \end{subfigure}
    ~
    \begin{subfigure}{0.3\columnwidth}
        \includegraphics[width=\textwidth]{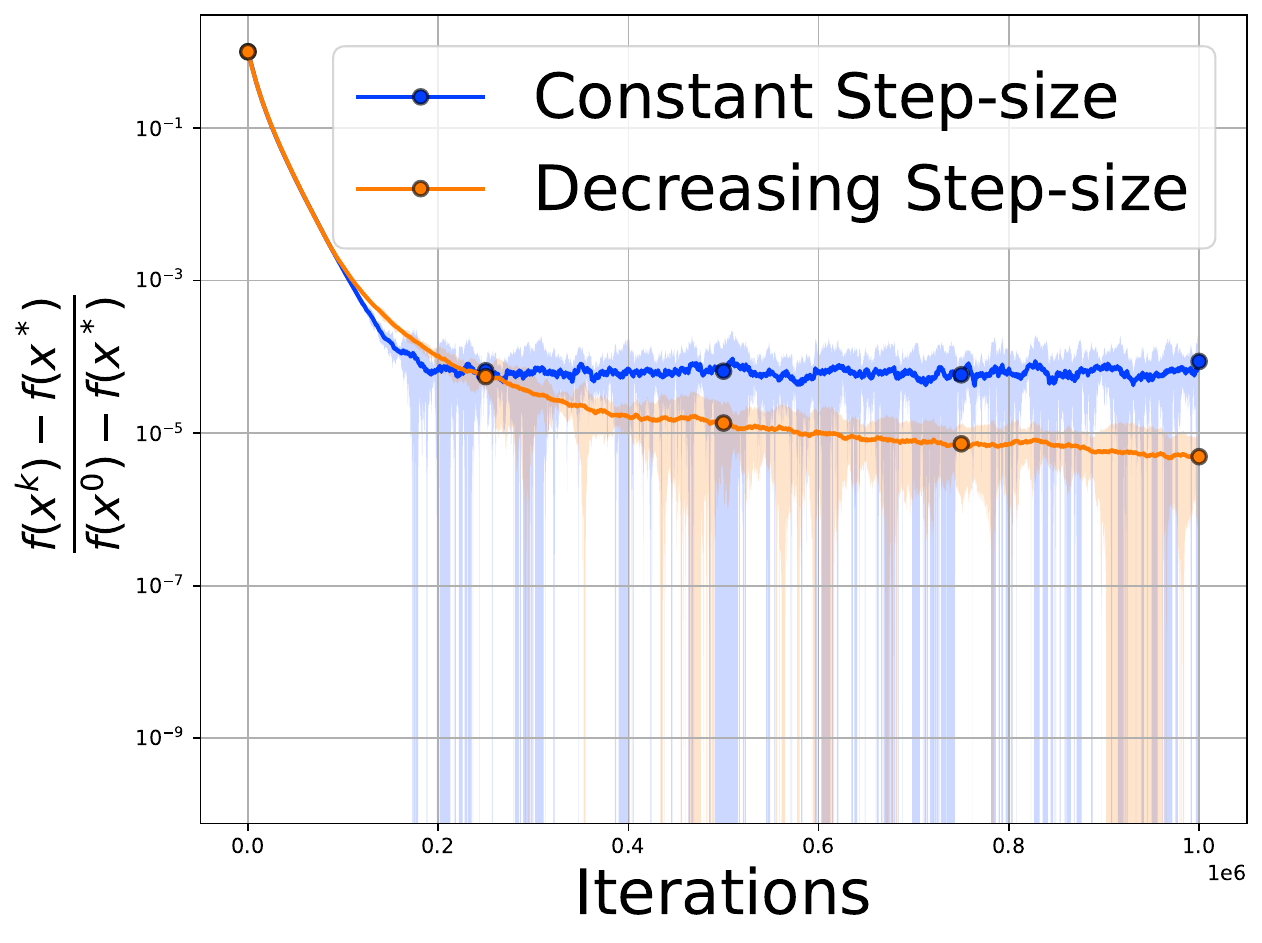}
    \end{subfigure}
    ~
    \begin{subfigure}{0.3\columnwidth}
        \includegraphics[width=\textwidth]{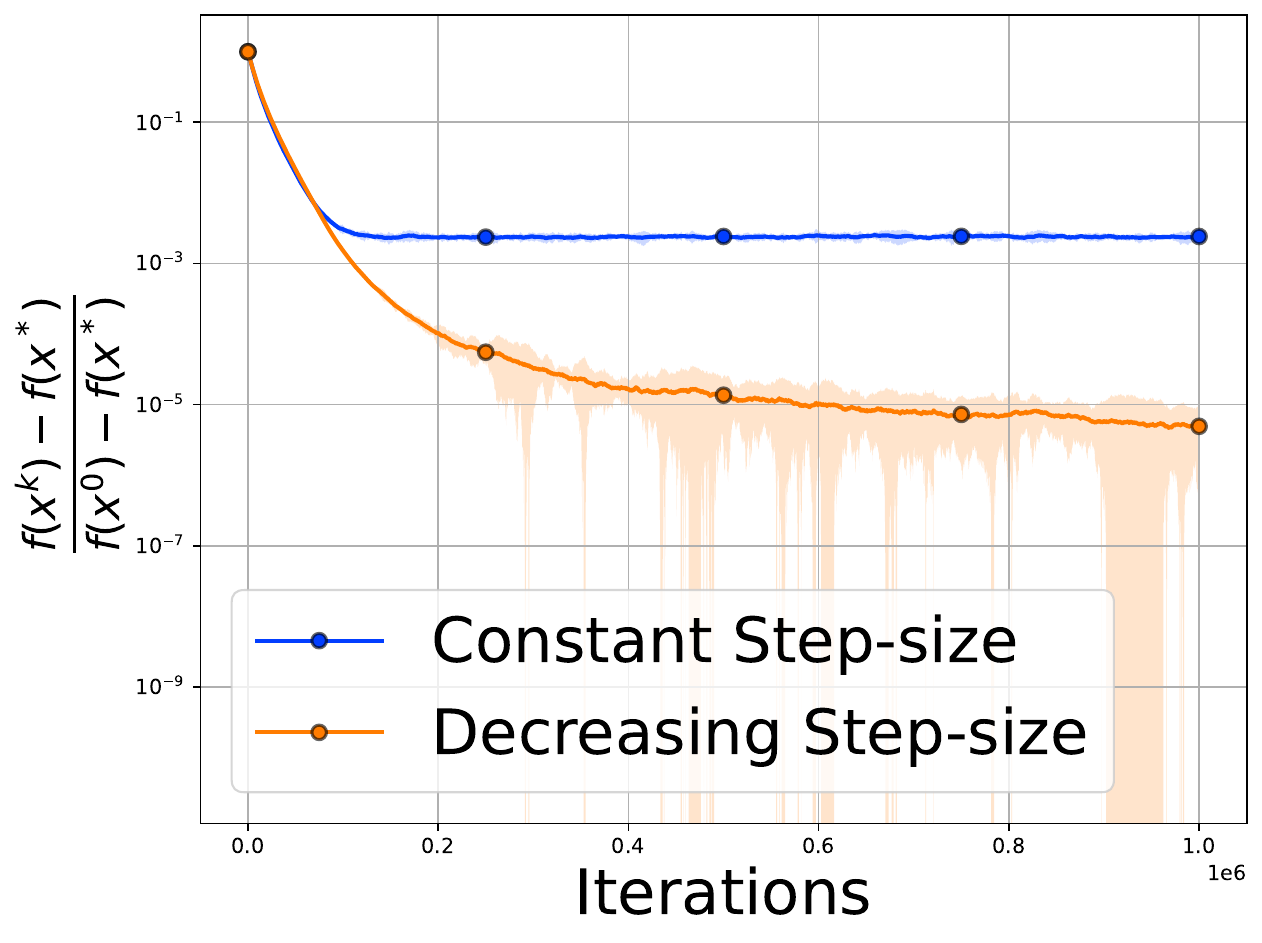}
    \end{subfigure}
    \caption{Comparison between constant and decreasing step size regimes of \ref{eq:unifiedsam}. From left to right we have $\l=0.0, 0.5, 1.0$}
    \label{fig:dec}
    \vspace{-5mm}
\end{figure}

\textbf{Uniform vs Importance Sampling.}
In this experiment, we demonstrate the benefit of using importance sampling compared to uniform sampling. We consider a ridge regression problem with $n=100$, $d=100$, and $\l_r=3/n$. The eigenvalues of matrix $\boldsymbol{A}$ are sampled uniformly from the interval $[1.0,10.0]$, and the vector $b$ is drawn from a standard Gaussian distribution. We run \ref{eq:unifiedsam} with $\l=0.0,0.5,1.0$, employing single-element uniform sampling for $3,000$ epochs across five trials. For importance sampling, the probabilities are set as $p_i=L_i/\sum_{j=1}^nL_j$. The averaged results with one standard deviation are presented in \Cref{fig:imp}. The results clearly show that importance sampling enhances the convergence of \ref{eq:unifiedsam} for all values of $\l$, which is consistent with the discussion in \Cref{sec:importance}. 

\begin{figure}
	\centering
    \begin{subfigure}{0.3\columnwidth}
        \includegraphics[width=\textwidth]{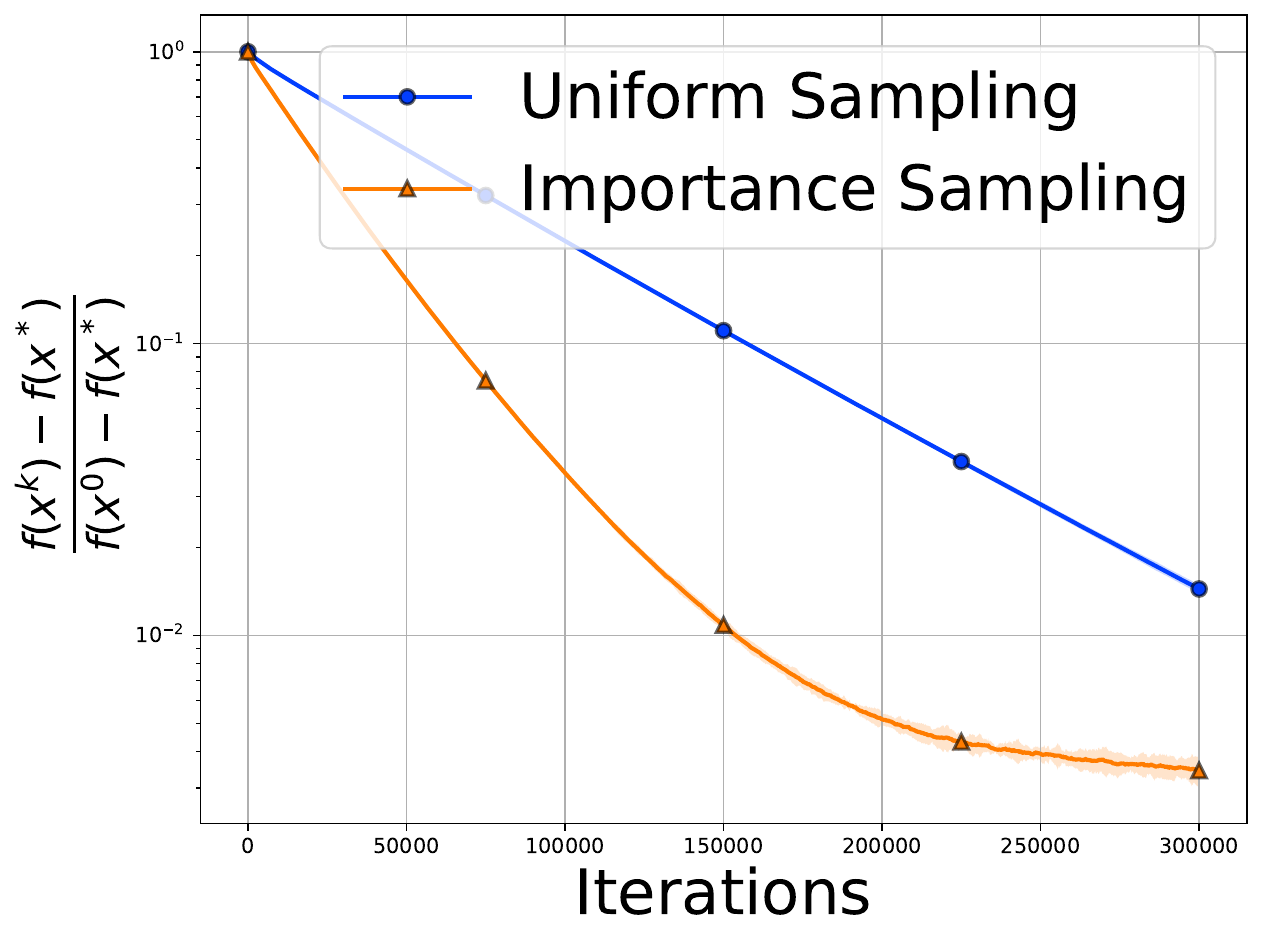}
    \end{subfigure}
    ~
    \begin{subfigure}{0.3\columnwidth}
        \includegraphics[width=\textwidth]{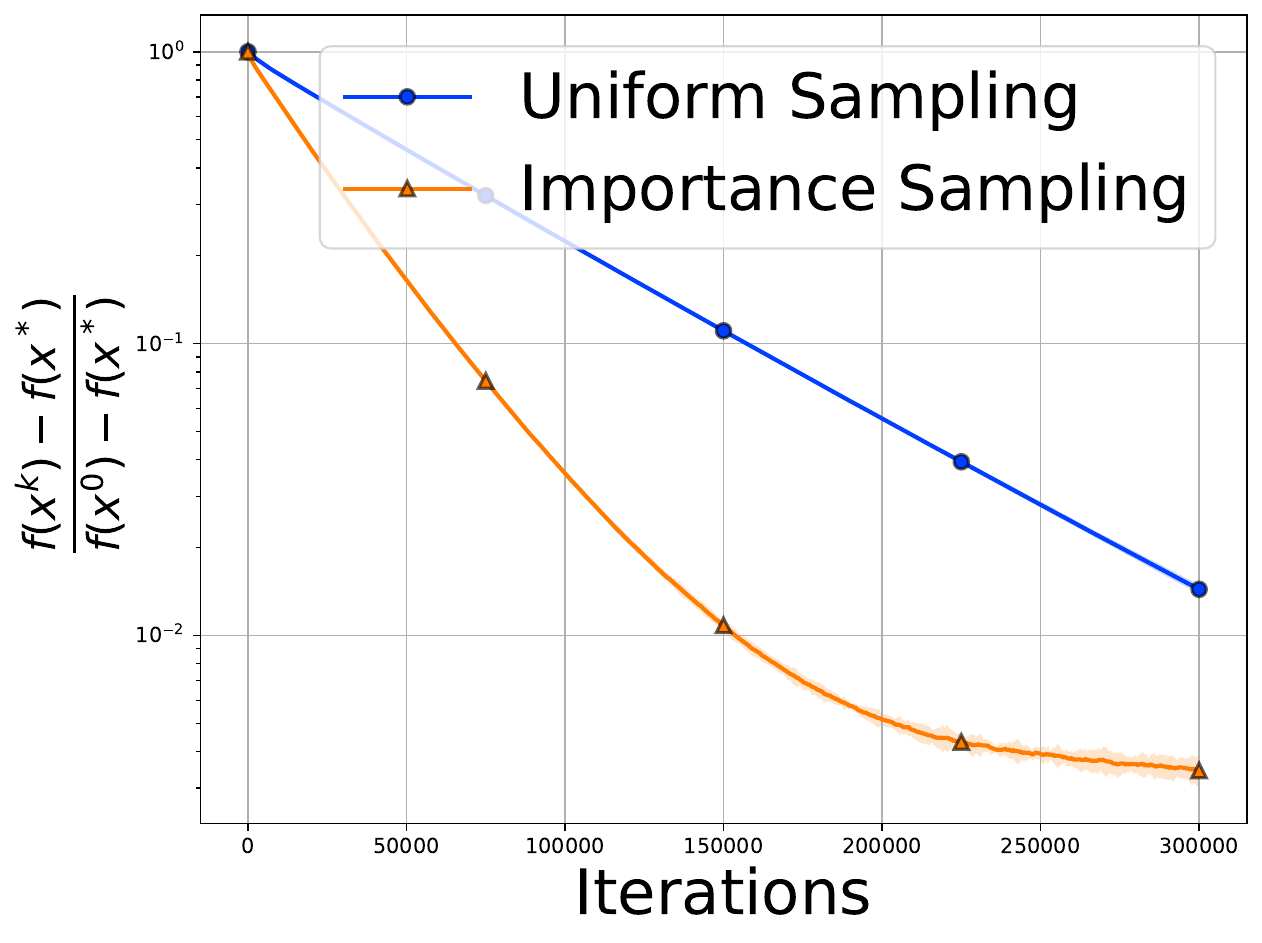}
    \end{subfigure}
    ~
    \begin{subfigure}{0.3\columnwidth}
        \includegraphics[width=\textwidth]{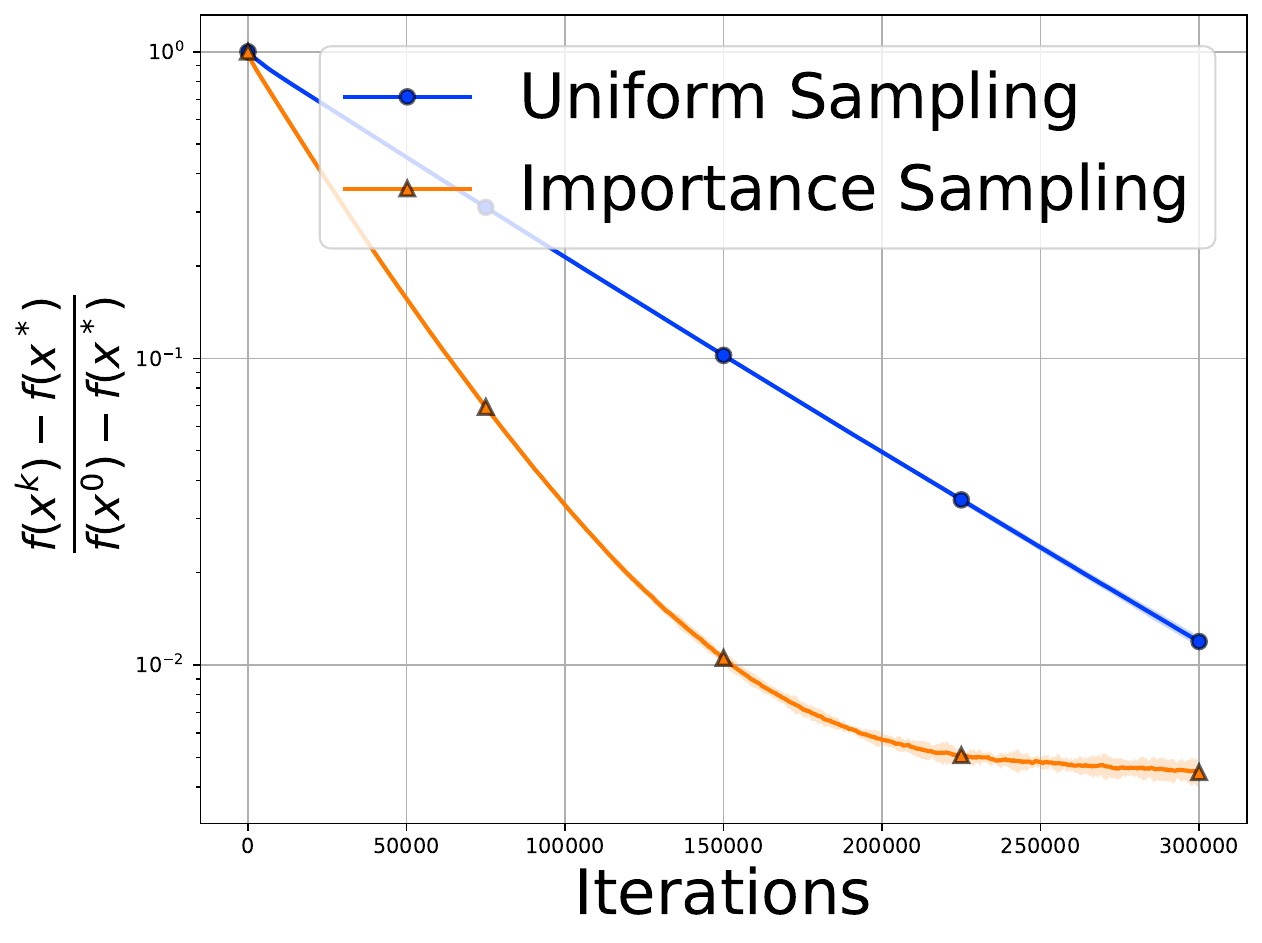}
    \end{subfigure}
    \caption{Comparison between uniform and importance sampling for \ref{eq:unifiedsam}. From left to right we have $\l=0.0, 0.5, 1.0$}
    \label{fig:imp}
    \vspace{-5mm}
\end{figure}

\vspace{-2mm}
\subsection{Image Classification}
\label{sec:dl}

In this section, we evaluate the generalization performance of \ref{eq:unifiedsam} using the classic image classification task. The models are trained on the CIFAR-10 and CIFAR-100 datasets, \citep{krizhevsky2009learning}. For data augmentation, we apply standard techniques such as random crop, random horizontal flip, and normalization \citep{devries2017improved}. All experiments are conducted on NVIDIA RTX 6000 Ada GPUs. 

\textbf{Models.}
We use several models in our experiments, including ResNet-18 (RN-18) \citep{he2016deep}, and PreActResNet-18 (PRN-18) \citep{he2016identity}. To demonstrate the scalability of \ref{eq:unifiedsam}, we also include Wider ResNet (WRN-28-10) \citep{zagoruyko2016wide}. 

\textbf{Different values of \texorpdfstring{$\l$}{lambda}.}
This subsection explores how varying $\l$ in the \ref{eq:unifiedsam} update rule affects generalization performance. We experiment with the PRN-18 and WRN-28-10 models trained on CIFAR-10 and CIFAR-100. The \ref{eq:unifiedsam} method is trained using $\r=0.1, 0.2, 0.3, 0.4$ and $\l_t=0.0, 0.5, 1.0, 1/t, 1-1/t$. The last two choices of $\l_t$ can be intuitively explained as follows: for $\l_t=1-1/t$, the algorithm starts as \ref{eq:usam} ($\l_1=0$) and gradually transitions toward \ref{eq:nsam} as $\l_t\to1$ when $t\to\infty$, meaning it begins with \ref{eq:usam} and approaches \ref{eq:nsam}. Conversely, for $\l_t=1/t$, the algorithm behaves the other way around, starting closer from \ref{eq:nsam} and converging to \ref{eq:usam} over time. Following \citet{pang2021bag} and \citet{zhang2024duality}, we set the weight decay to $5\times10^{-4}$, the momentum to $0.9$, and train for $100$ epochs. The step size $\g$ is initialized at $0.1$ and reduced by a factor of $10$ at the 75-th and 90-th epochs. 

Each experiment is repeated three times, and we report the averages and standard errors in \Cref{tab:model_wrn_dset_10,tab:model_wrn_dset_100} and \Cref{tab:model_prn_dset_10,tab:model_prn_dset_100} in appendix. The results indicate that, for a fixed $\r$, the optimal $\l$ value is not always $\l=0$ or $\l=1$. Overall, $\l_t=1-1/t$ appears to be a reliable choice. In particular, we observe that \ref{eq:usam} ($\l=0$) does not have the best performance in any of these experiments. On the contrary in the CIFAR10 dataset the value $\l_t=1-1/t$ is a good choice while in the CIFAR100 dataset both $\l_t=1$ and $\l_t=1-1/t$ offer strong performance. 

\begin{table}[H]
    \caption{Test accuracy (\%) of \ref{eq:unifiedsam} for WRN-28-10 on CIFAR10, evaluated across different values of $\r$ and $\l$. With bold we highlight the best performance for fixed $\r$.}
    \label{tab:model_wrn_dset_10}
    \centering
    \begin{tabular}{c|ccccc}
        \textbf{Unified SAM} & $\l=0.0$ & $\l=0.5$ & $\l=1.0$ & $\l=1/t$ & $\l=1-1/t$ \\
        \hline
        $\r=0.1$ & $95.7_{\pm 0.01}$ & $95.68_{\pm 0.11}$ & $\boldsymbol{95.9}_{\pm 0.08}$ & $95.84_{\pm 0.07}$ & $95.81_{\pm 0.03}$ \\
        $\r=0.2$ & $95.8_{\pm 0.05}$ & $95.77_{\pm 0.09}$ & $95.93_{\pm 0.07}$ & $95.71_{\pm 0.13}$ & $\boldsymbol{95.98}_{\pm 0.1}$ \\
        $\r=0.3$ & $95.35_{\pm 0.3}$ & $95.88_{\pm 0.1}$ & $95.95_{\pm 0.09}$ & $95.68_{\pm 0.02}$ & $\boldsymbol{95.99}_{\pm 0.06}$ \\
        $\r=0.4$ & $95.46_{\pm 0.02}$ & $95.76_{\pm 0.1}$ & $95.62_{\pm 0.05}$ & $95.46_{\pm 0.27}$ & $\boldsymbol{95.79}_{\pm 0.07}$ \\
        \hline
        \textbf{SGD} & \multicolumn{5}{c}{$95.35_{\pm 0.06}$} \\
    \end{tabular}
    \vspace{-2mm}
\end{table}

\begin{table}
    \caption{Test accuracy (\%) of \ref{eq:unifiedsam} for WRN-28-10 on CIFAR100, evaluated across different values of $\r$ and $\l$. With bold we highlight the best performance for fixed $\r$.}
    \label{tab:model_wrn_dset_100}
    \centering
    \begin{tabular}{c|ccccc}
        \textbf{Unified SAM} & $\l=0.0$ & $\l=0.5$ & $\l=1.0$ & $\l=1/t$ & $\l=1-1/t$ \\
        \hline
        $\r=0.1$ & $80.84_{\pm 0.08}$ & $\boldsymbol{81.01}_{\pm 0.11}$ & $80.69_{\pm 0.11}$ & $80.81_{\pm 0.24}$ & $80.88_{\pm 0.31}$ \\
        $\r=0.2$ & $81.12_{\pm 0.18}$ & $81.45_{\pm 0.23}$ & $\boldsymbol{81.53}_{\pm 0.09}$ & $81.31_{\pm 0.21}$ & $81.22_{\pm 0.19}$ \\
        $\r=0.3$ & $80.94_{\pm 0.13}$ & $81.64_{\pm 0.16}$ & $81.62_{\pm 0.16}$ & $81.03_{\pm 0.14}$ & $\boldsymbol{81.71}_{\pm 0.17}$ \\
        $\r=0.4$ & $80.1_{\pm 0.22}$ & $81.31_{\pm 0.16}$ & $\boldsymbol{81.70}_{\pm 0.06}$ & $80.39_{\pm 0.07}$ & $81.59_{\pm 0.05}$ \\
        \hline
        \textbf{SGD} & \multicolumn{5}{c}{$79.79_{\pm 0.18}$} \\
    \end{tabular}
    \vspace{-2mm}
\end{table}

\textbf{Unified VaSSO.}
The Variance-Suppressed Sharpness-Aware Optimization (VaSSO) method, introduced in \citet{li2023enhancing}, is an extension of SAM designed to reduce variance in gradient estimates. VaSSO adjusts the direction of gradient updates using a combination of past gradients and the current gradient, controlled by a parameter $\theta$, aiming to suppress noise during training and enhance generalization, particularly in overparameterized models. In their work, they prove that VaSSO converges at the same asymptotic rate as \ref{eq:nsam}, under the bounded variance assumption. The update rule of VaSSO is defined as follows: $d_t=(1-\th)d_{t-1}+\th\nabla f_i(x^t)$, $x^{t+1}=x^t-\g_t\nabla f_i\left(x^t+\r_t\frac{d_t}{\|d_t\|}\right)$. We can incorporate our unification approach, initially designed for SAM, into VaSSO. This leads to the following modified update rule: 
\begin{align*}
    \label{eq:unified-vasso}
    d_t&=(1-\th)d_{t-1}+\th\nabla f_i(x^t)\\
    x^{t+1}&=x^t-\g_t\nabla f_i\left(x^t+\r_t\left(1-\l_t+\frac{\l_t}{\|d_t\|}\right)d_t\right)\tag{Unified VaSSO}
\end{align*}
Note that when $\l_t=1$ then we recover VaSSO as introduced in \citet{li2023enhancing}. 

In this section, we conduct experiments to evaluate the performance of \ref{eq:unified-vasso}. All models are trained for $200$ epochs with a batch size of $128$. A cosine scheduler is employed in all cases, with an initial step size of $0.05$. The weight decay is set to $0.001$. For VaSSO, we use $\th=0.4$, as this value provides the best accuracy according to \citet{li2023enhancing}. For the CIFAR-10 dataset, we set $\r=0.1$, while for CIFAR-100, we use $\r=0.2$. Each experiment is repeated three times, and we report the average of the maximum test accuracy along with the standard error. The numerical results are presented in \Cref{tab:vas_pap_vasso_cifar10,tab:vas_pap_vasso_cifar100}. 

The results show that, for the WideResNet-28-10 model the value $\l_t=1-1/t$ produces the best accuracy. For the ResNet-18 model the best values for $\l$ appear to be $\l=0.5$ and $\l=1-1/t$. Thus, the choice $\l_t=1-1/t$ appears to be a strong overall choice. In any cases, a $\l$ value different from $1$ achieves the best performance. 

\begin{table}[H]
    \caption{Test accuracy (\%) of \ref{eq:unified-vasso} on various neural networks trained on CIFAR10.}
    \label{tab:vas_pap_vasso_cifar10}
    \centering
    \begin{tabular}{c|ccccc}
        \textbf{Model} & $\l=0.0$ & $\l=0.5$ & $\l=1.0$ & $\l=1/t$ & $\l=1-1/t$ \\
        \hline
        ResNet-18 & $96.12_{\pm 0.02}$ & $96.10_{\pm 0.05}$ & $96.22_{\pm 0.11}$ & $96.03_{\pm 0.04}$ & $\boldsymbol{96.34}_{\pm 0.01}$ \\
        WideResNet-28-10 & $96.77_{\pm 0.07}$ & $96.93_{\pm 0.03}$ & $97.03_{\pm 0.06}$ & $96.71_{\pm 0.06}$ & $\boldsymbol{97.06}_{\pm 0.09}$ \\
    \end{tabular}
    \vspace{-2mm}
\end{table}

\begin{table}[H]
    \caption{Test accuracy (\%) of \ref{eq:unified-vasso} on various neural networks trained on CIFAR100.}
    \label{tab:vas_pap_vasso_cifar100}
    \centering
    \begin{tabular}{c|ccccc}
        \textbf{Model} & $\l=0.0$ & $\l=0.5$ & $\l=1.0$ & $\l=1/t$ & $\l=1-1/t$ \\
        \hline
        ResNet-18 & $79.82_{\pm 0.15}$ & $\boldsymbol{80.01}_{\pm 0.07}$ & $79.76_{\pm 0.03}$ & $79.93_{\pm 0.13}$ & $79.86_{\pm 0.04}$ \\
        WideResNet-28-10 & $83.07_{\pm 0.25}$ & $83.29_{\pm 0.33}$ & $83.51_{\pm 0.09}$ & $82.83_{\pm 0.18}$ & $\boldsymbol{83.66}_{\pm 0.19}$ \\
    \end{tabular}
    \vspace{-2mm}
\end{table}

\vspace{-2mm}
\section{Conclusion}
\label{sec:concl}

In this work, we introduced \ref{eq:unifiedsam}, an algorithm that generalizes \ref{eq:nsam} and \ref{eq:usam} via the convex combination parameter $\lambda \in [0,1]$. The proposed analysis relaxes the common bounded variance assumption used in previous works by employing the \ref{eq:abc} condition, which enables us to prove convergence under weaker assumptions. In particular, under the \ref{eq:abc}, we established convergence guarantees for \ref{eq:unifiedsam} for both PL and general non-convex functions, and our analysis encompasses importance sampling and $tau$-nice sampling. Future work could investigate the optimal value of $\lambda$ for various classes of problems, extend the unified approach to incorporate adaptive step sizes $\rho_t$ and $\gamma_t$, and study \ref{eq:unifiedsam} in distributed and federated learning settings. Another promising direction is to explore how our approach may help minimize the sharpness-aware loss by employing the Hessian-regularized methods proposed by \citet{wen2023sharpness} and \citet{bartlett2023dynamics}.

\newpage
\bibliographystyle{iclr2025_conference}
\bibliography{main}

\begin{thebibliography}{49}
\providecommand{\natexlab}[1]{#1}
\providecommand{\url}[1]{\texttt{#1}}
\expandafter\ifx\csname urlstyle\endcsname\relax
  \providecommand{\doi}[1]{doi: #1}\else
  \providecommand{\doi}{doi: \begingroup \urlstyle{rm}\Url}\fi

\bibitem[Andriushchenko \& Flammarion(2022)Andriushchenko and Flammarion]{andriushchenko2022towards}
Maksym Andriushchenko and Nicolas Flammarion.
\newblock Towards understanding sharpness-aware minimization.
\newblock In \emph{{ICML}}, 2022.

\bibitem[Andriushchenko et~al.(2023)Andriushchenko, Bahri, Mobahi, and Flammarion]{andriushchenko2023sharpness}
Maksym Andriushchenko, Dara Bahri, Hossein Mobahi, and Nicolas Flammarion.
\newblock Sharpness-aware minimization leads to low-rank features.
\newblock In \emph{{NeurIPS}}, 2023.

\bibitem[Bartlett et~al.(2023)Bartlett, Long, and Bousquet]{bartlett2023dynamics}
Peter~L Bartlett, Philip~M Long, and Olivier Bousquet.
\newblock The dynamics of sharpness-aware minimization: Bouncing across ravines and drifting towards wide minima.
\newblock \emph{Journal of Machine Learning Research}, 24\penalty0 (316):\penalty0 1--36, 2023.

\bibitem[Beznosikov et~al.(2023)Beznosikov, Gorbunov, Berard, and Loizou]{beznosikov2023stochastic}
Aleksandr Beznosikov, Eduard Gorbunov, Hugo Berard, and Nicolas Loizou.
\newblock Stochastic gradient descent-ascent: Unified theory and new efficient methods.
\newblock In \emph{{AISTATS}}, 2023.

\bibitem[Choudhury et~al.(2024)Choudhury, Gorbunov, and Loizou]{choudhury2024single}
Sayantan Choudhury, Eduard Gorbunov, and Nicolas Loizou.
\newblock Single-call stochastic extragradient methods for structured non-monotone variational inequalities: Improved analysis under weaker conditions.
\newblock In \emph{NeurIPS}, 2024.

\bibitem[Dai et~al.(2023)Dai, Ahn, and Sra]{dai2024crucial}
Yan Dai, Kwangjun Ahn, and Suvrit Sra.
\newblock The crucial role of normalization in sharpness-aware minimization.
\newblock In \emph{{NeurIPS}}, 2023.

\bibitem[DeVries(2017)]{devries2017improved}
Terrance DeVries.
\newblock Improved regularization of convolutional neural networks with cutout.
\newblock \emph{arXiv preprint arXiv:1708.04552}, 2017.

\bibitem[Dziugaite \& Roy(2018)Dziugaite and Roy]{dziugaite2018entropy}
Gintare~Karolina Dziugaite and Daniel Roy.
\newblock Entropy-sgd optimizes the prior of a pac-bayes bound: Generalization properties of entropy-sgd and data-dependent priors.
\newblock In \emph{{ICML}}, 2018.

\bibitem[Foret et~al.(2021)Foret, Kleiner, Mobahi, and Neyshabur]{foret2021sharpness}
Pierre Foret, Ariel Kleiner, Hossein Mobahi, and Behnam Neyshabur.
\newblock Sharpness-aware minimization for efficiently improving generalization.
\newblock In \emph{{ICLR}}, 2021.

\bibitem[Gorbunov et~al.(2022)Gorbunov, Berard, Gidel, and Loizou]{gorbunov2022stochastic}
Eduard Gorbunov, Hugo Berard, Gauthier Gidel, and Nicolas Loizou.
\newblock Stochastic extragradient: General analysis and improved rates.
\newblock In \emph{{AISTATS}}, 2022.

\bibitem[Gower et~al.(2021)Gower, Sebbouh, and Loizou]{gower2021sgd}
Robert Gower, Othmane Sebbouh, and Nicolas Loizou.
\newblock {SGD} for structured nonconvex functions: Learning rates, minibatching and interpolation.
\newblock In \emph{{AISTATS}}, 2021.

\bibitem[Gower et~al.(2019)Gower, Loizou, Qian, Sailanbayev, Shulgin, and Richt{\'a}rik]{gower2019sgd}
Robert~Mansel Gower, Nicolas Loizou, Xun Qian, Alibek Sailanbayev, Egor Shulgin, and Peter Richt{\'a}rik.
\newblock {SGD}: General analysis and improved rates.
\newblock In \emph{{ICML}}, 2019.

\bibitem[Harada \& Iiduka(2024)Harada and Iiduka]{harada2024convergence}
Hinata Harada and Hideaki Iiduka.
\newblock Convergence of sharpness-aware minimization algorithms using increasing batch size and decaying learning rate.
\newblock \emph{arXiv preprint arXiv:2409.09984}, 2024.

\bibitem[Hardt et~al.(2016)Hardt, Recht, and Singer]{hardt2016train}
Moritz Hardt, Ben Recht, and Yoram Singer.
\newblock Train faster, generalize better: Stability of stochastic gradient descent.
\newblock In \emph{{ICML}}, 2016.

\bibitem[He et~al.(2016{\natexlab{a}})He, Zhang, Ren, and Sun]{he2016deep}
Kaiming He, Xiangyu Zhang, Shaoqing Ren, and Jian Sun.
\newblock Deep residual learning for image recognition.
\newblock In \emph{CVPR}, 2016{\natexlab{a}}.

\bibitem[He et~al.(2016{\natexlab{b}})He, Zhang, Ren, and Sun]{he2016identity}
Kaiming He, Xiangyu Zhang, Shaoqing Ren, and Jian Sun.
\newblock Identity mappings in deep residual networks.
\newblock In \emph{ECCV}, 2016{\natexlab{b}}.

\bibitem[Jiang et~al.(2019)Jiang, Neyshabur, Mobahi, Krishnan, and Bengio]{jiang2019fantastic}
Yiding Jiang, Behnam Neyshabur, Hossein Mobahi, Dilip Krishnan, and Samy Bengio.
\newblock Fantastic generalization measures and where to find them.
\newblock In \emph{{ICLR}}, 2019.

\bibitem[Karimi et~al.(2016)Karimi, Nutini, and Schmidt]{karimi2016linear}
Hamed Karimi, Julie Nutini, and Mark Schmidt.
\newblock Linear convergence of gradient and proximal-gradient methods under the polyak-lojasiewicz condition.
\newblock In \emph{Machine Learning and Knowledge Discovery in Databases: European Conference, ECML PKDD}, pp.\  795--811, 2016.

\bibitem[Keskar et~al.(2016)Keskar, Mudigere, Nocedal, Smelyanskiy, and Tang]{keskar2016large}
Nitish~Shirish Keskar, Dheevatsa Mudigere, Jorge Nocedal, Mikhail Smelyanskiy, and Ping Tak~Peter Tang.
\newblock On large-batch training for deep learning: Generalization gap and sharp minima.
\newblock In \emph{{ICLR}}, 2016.

\bibitem[Khaled \& Richt{\'a}rik(2020)Khaled and Richt{\'a}rik]{khaled2020better}
Ahmed Khaled and Peter Richt{\'a}rik.
\newblock Better theory for sgd in the nonconvex world.
\newblock \emph{arXiv preprint arXiv:2002.03329}, 2020.

\bibitem[Khaled et~al.(2023)Khaled, Sebbouh, Loizou, Gower, and Richt{\'a}rik]{khaled2023unified}
Ahmed Khaled, Othmane Sebbouh, Nicolas Loizou, Robert~M Gower, and Peter Richt{\'a}rik.
\newblock Unified analysis of stochastic gradient methods for composite convex and smooth optimization.
\newblock \emph{Journal of Optimization Theory and Applications}, 199\penalty0 (2):\penalty0 499--540, 2023.

\bibitem[Khanh et~al.(2024)Khanh, Luong, Mordukhovich, and Tran]{khanh2024fundamental}
Pham~Duy Khanh, Hoang-Chau Luong, Boris~S Mordukhovich, and Dat~Ba Tran.
\newblock Fundamental convergence analysis of sharpness-aware minimization.
\newblock \emph{arXiv preprint arXiv:2401.08060}, 2024.

\bibitem[Krizhevsky et~al.(2009)]{krizhevsky2009learning}
Alex Krizhevsky et~al.
\newblock {Learning Multiple Layers of Features from Tiny Images}.
\newblock Technical report, University of Toronto, 2009.

\bibitem[Lei et~al.(2019)Lei, Hu, Li, and Tang]{lei2019stochastic}
Yunwen Lei, Ting Hu, Guiying Li, and Ke~Tang.
\newblock Stochastic gradient descent for nonconvex learning without bounded gradient assumptions.
\newblock \emph{IEEE transactions on neural networks and learning systems}, 31\penalty0 (10):\penalty0 4394--4400, 2019.

\bibitem[Lenard \& Minkoff(1984)Lenard and Minkoff]{lenard1984randomly}
Melanie~L Lenard and Michael Minkoff.
\newblock Randomly generated test problems for positive definite quadratic programming.
\newblock \emph{ACM Transactions on Mathematical Software (TOMS)}, 10\penalty0 (1):\penalty0 86--96, 1984.

\bibitem[Li \& Giannakis(2023)Li and Giannakis]{li2023enhancing}
Bingcong Li and Georgios Giannakis.
\newblock Enhancing sharpness-aware optimization through variance suppression.
\newblock In \emph{{NeurIPS}}, 2023.

\bibitem[Liu et~al.(2020)Liu, Papailiopoulos, and Achlioptas]{liu2020bad}
Shengchao Liu, Dimitris Papailiopoulos, and Dimitris Achlioptas.
\newblock Bad global minima exist and {SGD} can reach them.
\newblock In \emph{{NeurIPS}}, 2020.

\bibitem[Loizou et~al.(2020)Loizou, Berard, Jolicoeur-Martineau, Vincent, Lacoste-Julien, and Mitliagkas]{loizou2020stochastic}
Nicolas Loizou, Hugo Berard, Alexia Jolicoeur-Martineau, Pascal Vincent, Simon Lacoste-Julien, and Ioannis Mitliagkas.
\newblock Stochastic hamiltonian gradient methods for smooth games.
\newblock In \emph{{ICML}}, 2020.

\bibitem[Lojasiewicz(1963)]{lojasiewicz1963topological}
Stanislaw Lojasiewicz.
\newblock A topological property of real analytic subsets.
\newblock \emph{Coll. du CNRS, Les {\'e}quations aux d{\'e}riv{\'e}es partielles}, 117\penalty0 (87-89):\penalty0 2, 1963.

\bibitem[Mi et~al.(2022)Mi, Shen, Ren, Zhou, Sun, Ji, and Tao]{mi2022make}
Peng Mi, Li~Shen, Tianhe Ren, Yiyi Zhou, Xiaoshuai Sun, Rongrong Ji, and Dacheng Tao.
\newblock Make sharpness-aware minimization stronger: A sparsified perturbation approach.
\newblock In \emph{{NeurIPS}}, 2022.

\bibitem[Nam et~al.(2023)Nam, Chung, and Lee]{nam2023almost}
Kyunghun Nam, Jinseok Chung, and Namhoon Lee.
\newblock Almost sure last iterate convergence of sharpness-aware minimization.
\newblock 2023.

\bibitem[Neyshabur et~al.(2017)Neyshabur, Bhojanapalli, McAllester, and Srebro]{neyshabur2017exploring}
Behnam Neyshabur, Srinadh Bhojanapalli, David McAllester, and Nati Srebro.
\newblock Exploring generalization in deep learning.
\newblock In \emph{{NeurIPS}}, 2017.

\bibitem[Neyshabur et~al.(2018)Neyshabur, Li, Bhojanapalli, LeCun, and Srebro]{neyshabur2018towards}
Behnam Neyshabur, Zhiyuan Li, Srinadh Bhojanapalli, Yann LeCun, and Nathan Srebro.
\newblock Towards understanding the role of over-parametrization in generalization of neural networks.
\newblock \emph{arXiv preprint arXiv:1805.12076}, 2018.

\bibitem[Pang et~al.(2021)Pang, Yang, Dong, Su, and Zhu]{pang2021bag}
Tianyu Pang, Xiao Yang, Yinpeng Dong, Hang Su, and Jun Zhu.
\newblock Bag of tricks for adversarial training.
\newblock In \emph{{ICML}}, 2021.

\bibitem[Polyak(1987)]{polyak1987introduction}
Boris~T Polyak.
\newblock \emph{Introduction to Optimization}.
\newblock New York, Optimization Software, 1987.

\bibitem[Shin et~al.(2024)Shin, Lee, Andriushchenko, and Lee]{shin2024analyzing}
Sungbin Shin, Dongyeop Lee, Maksym Andriushchenko, and Namhoon Lee.
\newblock Analyzing sharpness-aware minimization under overparameterization.
\newblock \emph{arXiv preprint arXiv:2311.17539}, 2024.

\bibitem[Si \& Yun(2023)Si and Yun]{si2023practical}
Dongkuk Si and Chulhee Yun.
\newblock Practical sharpness-aware minimization cannot converge all the way to optima.
\newblock In \emph{{NeurIPS}}, 2023.

\bibitem[Singh et~al.(2025)Singh, Mobahi, Agarwala, and Dauphin]{singh2025avoiding}
Sidak~Pal Singh, Hossein Mobahi, Atish Agarwala, and Yann Dauphin.
\newblock Avoiding spurious sharpness minimization broadens applicability of sam.
\newblock \emph{arXiv preprint arXiv:2502.02407}, 2025.

\bibitem[Tahmasebi et~al.(2024)Tahmasebi, Soleymani, Bahri, Jegelka, and Jaillet]{tahmasebi2024universal}
Behrooz Tahmasebi, Ashkan Soleymani, Dara Bahri, Stefanie Jegelka, and Patrick Jaillet.
\newblock A universal class of sharpness-aware minimization algorithms.
\newblock In \emph{{ICML}}, 2024.

\bibitem[Wen et~al.(2023)Wen, Ma, and Li]{wen2023sharpness}
Kaiyue Wen, Tengyu Ma, and Zhiyuan Li.
\newblock How sharpness-aware minimization minimizes sharpness?
\newblock In \emph{{ICLR}}, 2023.

\bibitem[Wilson et~al.(2017)Wilson, Roelofs, Stern, Srebro, and Recht]{wilson2017marginal}
Ashia~C Wilson, Rebecca Roelofs, Mitchell Stern, Nati Srebro, and Benjamin Recht.
\newblock The marginal value of adaptive gradient methods in machine learning.
\newblock In \emph{{NeurIPS}}, 2017.

\bibitem[Wu et~al.(2020)Wu, Xia, and Wang]{wu2020adversarial}
Dongxian Wu, Shu-Tao Xia, and Yisen Wang.
\newblock Adversarial weight perturbation helps robust generalization.
\newblock \emph{NeurIPS}, 2020.

\bibitem[Xie et~al.(2024)Xie, Pethick, and Cevher]{xie2024sampa}
Wanyun Xie, Thomas Pethick, and Volkan Cevher.
\newblock Sampa: Sharpness-aware minimization parallelized.
\newblock In \emph{{NeurIPS}}, 2024.

\bibitem[Zagoruyko \& Komodakis(2016)Zagoruyko and Komodakis]{zagoruyko2016wide}
Sergey Zagoruyko and Nikos Komodakis.
\newblock Wide residual networks.
\newblock In \emph{British Machine Vision Conference}, 2016.

\bibitem[Zhang et~al.(2016)Zhang, Bengio, Hardt, Recht, and Vinyals]{zhang2016understanding}
Chiyuan Zhang, Samy Bengio, Moritz Hardt, Benjamin Recht, and Oriol Vinyals.
\newblock Understanding deep learning requires rethinking generalization.
\newblock \emph{arXiv preprint arXiv:1611.03530}, 2016.

\bibitem[Zhang et~al.(2021)Zhang, Bengio, Hardt, Recht, and Vinyals]{zhang2021understanding}
Chiyuan Zhang, Samy Bengio, Moritz Hardt, Benjamin Recht, and Oriol Vinyals.
\newblock Understanding deep learning (still) requires rethinking generalization.
\newblock \emph{Communications of the ACM}, 64\penalty0 (3):\penalty0 107--115, 2021.

\bibitem[Zhang et~al.(2024)Zhang, He, Zhu, Chen, Wang, and Wei]{zhang2024duality}
Yihao Zhang, Hangzhou He, Jingyu Zhu, Huanran Chen, Yifei Wang, and Zeming Wei.
\newblock On the duality between sharpness-aware minimization and adversarial training.
\newblock In \emph{{ICML}}, 2024.

\bibitem[Zheng et~al.(2021)Zheng, Zhang, and Mao]{zheng2021regularizing}
Yaowei Zheng, Richong Zhang, and Yongyi Mao.
\newblock Regularizing neural networks via adversarial model perturbation.
\newblock In \emph{Proceedings of the IEEE/CVF Conference on Computer Vision and Pattern Recognition}, pp.\  8156--8165, 2021.

\bibitem[Zhuang et~al.(2022)Zhuang, Gong, Yuan, Cui, Adam, Dvornek, Tatikonda, Duncan, and Liu]{zhuang2022surrogate}
Juntang Zhuang, Boqing Gong, Liangzhe Yuan, Yin Cui, Hartwig Adam, Nicha~C. Dvornek, Sekhar Tatikonda, James~S. Duncan, and Ting Liu.
\newblock Surrogate gap minimization improves sharpness-aware training.
\newblock In \emph{{ICLR}}, 2022.

\end{thebibliography}

\newpage
\appendix

\part*{Supplementary Material}

The Supplementary Material is organized as follows: In \Cref{sec:lemmas}, we give the basic definitions and lemmas that we need for the proofs. \Cref{sec:proofs} presents the proofs of the theoretical guarantees from the main paper. In \Cref{sec:more-exps}, we provide additional experiments. 

{\small\tableofcontents}

\newpage
\section{Technical Preliminaries}
\label{sec:lemmas}

\subsection{Basic Definitions}
In this section, we present some basic definitions we use throughout the paper. 

\begin{definition}[$L$-smooth]
    \label{def:smooth}
    A differentiable function $f:\R^d\to\R$ is $L$-smooth if there exists a constant $L>0$ such that 
    \begin{align}
        \label{eq:smooth}
        \|\nabla f(x)-\nabla f(y)\|\leq L\|x-y\|,
    \end{align}
    or equivalently 
    \begin{align*}
        f(x)-f(y)\leq\langle\nabla f(y),x-y\rangle+\frac{L}{2}\|x-y\|^2
    \end{align*}
    for all $x,y\in\R^d$. 
\end{definition}

\begin{lemma}
    \label{lem:smooth}
    Let $f$ be an $L$-smooth function. Then for all $x\in\R^n$ we have 
    \begin{align*}
        \|\nabla f(x)\|^2\leq2L[f(x)-f^{\inf}].
    \end{align*}
\end{lemma}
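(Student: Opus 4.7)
The plan is to use the smoothness upper bound from \Cref{def:smooth} applied at a cleverly chosen pair of points. Specifically, the descent lemma says
\[
f(y)\leq f(x)+\langle\nabla f(x),y-x\rangle+\tfrac{L}{2}\|y-x\|^2
\qquad\text{for all }x,y\in\R^d.
\]
For a fixed $x$, I would minimize the right-hand side over $y$. Since this is a convex quadratic in $y$, its unconstrained minimizer is $y=x-\tfrac{1}{L}\nabla f(x)$, the "gradient step" from $x$.

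Substituting this $y$ back into the inequality, the two linear/quadratic terms combine to give
\[
f\!\left(x-\tfrac{1}{L}\nabla f(x)\right)\leq f(x)-\tfrac{1}{L}\|\nabla f(x)\|^2+\tfrac{1}{2L}\|\nabla f(x)\|^2 = f(x)-\tfrac{1}{2L}\|\nabla f(x)\|^2.
\]
Now I would invoke the definition $f^{\inf}=\inf_{z\in\R^d} f(z)$, which yields $f^{\inf}\leq f\bigl(x-\tfrac{1}{L}\nabla f(x)\bigr)$. Chaining these two inequalities gives $f^{\inf}\leq f(x)-\tfrac{1}{2L}\|\nabla f(x)\|^2$, and rearranging produces the desired bound $\|\nabla f(x)\|^2\leq 2L[f(x)-f^{\inf}]$.

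There is essentially no obstacle here: the proof is a one-line calculation once the "gradient step" test point is chosen, and it relies only on the descent lemma form of smoothness (already stated in \Cref{def:smooth}) together with the existence of $f^{\inf}$ as a global lower bound of $f$. The only thing to be careful about is that $f^{\inf}$ is assumed to be finite, which is consistent with the paper's standing assumption (stated below (\ref{eq:main-prob})) that each $f_i$ is lower bounded.
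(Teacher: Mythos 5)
Your proof is correct and is the canonical argument for this standard fact: plug the gradient step $y=x-\tfrac{1}{L}\nabla f(x)$ into the descent-lemma form of smoothness and lower-bound the result by $f^{\inf}$. The paper states this lemma without proof, so there is nothing to diverge from; your derivation is exactly the one the authors implicitly rely on.
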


\begin{definition}[$\m$-PL]
    \label{def:pl}
    We say that a differentiable function $f:\R^d\to\R$ satisfies the Polyak-Lojasiewicz condition if 
    \begin{align}
        \label{eq:pl}
        \|\nabla f(x)\|^2\geq2\m(f(x)-f(x^*)),
    \end{align}
    for all $x\in\R^d$. 
\end{definition}

\begin{definition}[Interpolation]
    \label{def:interpolation}
    We say that the interpolation condition holds if there exists $x^*\in \mathcal{X}^*$ such that 
    \begin{align*}
        \min_{x\in\R^n}f_i(x)=f_i(x^*)
    \end{align*}
    for all $i=1,\dots,n$. 
\end{definition}

\begin{proposition}[Young's Inequality]
    \label{lem:young}
    For any $a,b\in\R^n$ and $\b\neq0$ we have 
    \begin{align}
        \label{eq:young-prod}
        |\langle a,b\rangle|\leq\frac{1}{2\b}\|a\|^2+\frac{\b}{2}\|b\|^2.
    \end{align}
    Completing the square we have 
    \begin{align}
        \label{eq:young-norm}
        \|a+b\|^2\leq(1+\b^{-1})\|a\|^2+(1+\b)\|b\|^2.
    \end{align}
    For $\b=1$ we get 
    \begin{align}
        \label{eq:young-classic}
        \|a+b\|^2\leq2\|a\|^2+2\|b\|^2.
    \end{align}
\end{proposition}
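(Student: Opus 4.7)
The plan is to derive all three displays from the elementary scalar AM--GM inequality $2xy \leq x^2 + y^2$. I note at the outset that while the statement allows $\b \neq 0$, for $\b < 0$ the first bound is essentially vacuous: the right-hand side $\frac{1}{2\b}\|a\|^2 + \frac{\b}{2}\|b\|^2$ is non-positive while the left-hand side is non-negative, so the inequality can hold only in the trivial case $a = b = 0$. I would therefore work under the (clearly intended) assumption $\b > 0$.

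For (\ref{eq:young-prod}), the cleanest route is to invoke Cauchy--Schwarz to get $|\langle a, b\rangle| \leq \|a\|\|b\|$, and then write $\|a\|\|b\| = \left(\|a\|/\sqrt{\b}\right)\cdot\left(\sqrt{\b}\,\|b\|\right)$ and apply scalar AM--GM to this product. The weighted form falls out in a single line. Equivalently, one can start from $0 \leq \bigl\|a/\sqrt{\b} - \sqrt{\b}\,b\bigr\|^2$, expand, and rearrange; choosing the sign of the cross term appropriately covers the absolute value.

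For (\ref{eq:young-norm}), I would expand $\|a+b\|^2 = \|a\|^2 + 2\langle a, b\rangle + \|b\|^2$, bound the inner-product term via (\ref{eq:young-prod}) (with the given $\b$), and collect like coefficients to obtain the factors $(1+\b^{-1})$ and $(1+\b)$. Display (\ref{eq:young-classic}) is then immediate by specializing $\b = 1$ in (\ref{eq:young-norm}).

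Since every step is a standard scalar manipulation, there is no genuine obstacle to the argument. The only item worth flagging in the write-up is the sign convention on $\b$ discussed above; beyond that, the proof is essentially a two-line calculation plus a specialization.
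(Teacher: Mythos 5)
Your proof is correct and is the standard derivation; the paper states this proposition without any proof, so there is nothing to compare against, and your two-line argument (Cauchy--Schwarz plus scalar AM--GM for (\ref{eq:young-prod}), then expansion of $\|a+b\|^2$ and specialization $\b=1$) fills that gap cleanly. Your caveat about the sign of $\b$ is also well taken: as stated with only $\b\neq0$, inequality (\ref{eq:young-prod}) fails for $\b<0$ unless $a=b=0$, so $\b>0$ is the intended hypothesis --- and indeed the paper only ever invokes the bound with $\b=L_{\max}\r_t>0$ (as in \Cref{lem:sam-prod}).
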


\subsection{Basic Lemmas}

\begin{lemma}
    \label{lem:sam-grad}
    Assume that each $f_i$ is $L_i$-smooth and consider the iterates of \ref{eq:unifiedsam}. Then it holds 
    \begin{align*}
        \E_\mathcal{D}\left\|g\left(x^t+\r_t\left(1-\l_t+\frac{\l_t}{\|g(x^t)\|}\right)g(x^t)\right)\right\|^2
        &\leq4L_{\max}^2\r_t^2\l_t^2\\
        &\q+2\left[2L_{\max}^2\r_t^2(1-\l_t)^2+1\right]\E_{\mathcal{D}}\|g(x^t)\|^2.
    \end{align*}
\end{lemma}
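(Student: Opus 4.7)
The plan is to bound the norm of $g$ at the perturbed point by relating it to $g(x^t)$ via smoothness, and then compute the squared distance between the perturbed point and $x^t$ explicitly. No probabilistic argument is needed beyond taking expectations at the end, since the bound will hold pointwise (for each realization of the sampling vector $v$) before any expectation is taken.

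First I would introduce the shorthand
\[
\tilde x^{\,t} \;=\; x^t + \r_t\left(1-\l_t+\frac{\l_t}{\|g(x^t)\|}\right)g(x^t)
\]
and observe that the perturbation is a nonnegative scalar multiple of $g(x^t)$, so its norm is
\[
\|\tilde x^{\,t}-x^t\| \;=\; \r_t(1-\l_t)\|g(x^t)\| + \r_t\l_t.
\]
Squaring and applying Young's inequality in the form \eqref{eq:young-classic} gives the clean bound
\[
\|\tilde x^{\,t}-x^t\|^2 \;\leq\; 2\r_t^2(1-\l_t)^2\|g(x^t)\|^2 + 2\r_t^2\l_t^2.
\]

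Next I would add and subtract $g(x^t)$ inside the norm we want to bound and again apply \eqref{eq:young-classic}:
\[
\|g(\tilde x^{\,t})\|^2 \;\leq\; 2\|g(\tilde x^{\,t})-g(x^t)\|^2 + 2\|g(x^t)\|^2.
\]
For the first term I would invoke the $L_{\max}$-smoothness of the stochastic gradient $g=\nabla f_v$, which follows from each $f_i$ being $L_i$-smooth together with the sampling convention (e.g., for $\t$-nice sampling $g$ is an average of $L_i$-smooth gradients, so it is at most $L_{\max}$-smooth pointwise in $v$). This yields $\|g(\tilde x^{\,t})-g(x^t)\|^2 \leq L_{\max}^2\|\tilde x^{\,t}-x^t\|^2$. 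Plugging in the bound on $\|\tilde x^{\,t}-x^t\|^2$ from the first step and regrouping produces exactly
\[
\|g(\tilde x^{\,t})\|^2 \;\leq\; 4L_{\max}^2\r_t^2\l_t^2 + 2\bigl[2L_{\max}^2\r_t^2(1-\l_t)^2+1\bigr]\|g(x^t)\|^2.
\]
Taking expectation over $\mathcal D$ (noting that the deterministic constant $4L_{\max}^2\r_t^2\l_t^2$ passes through unchanged) finishes the proof.

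The only delicate point is the smoothness constant used for $g$. The bound follows the same $L_{\max}$ as stated in the lemma, but one has to be careful that, under arbitrary sampling, a realization $g=\nabla f_v$ is indeed $L_{\max}$-smooth — this is straightforward for $\t$-nice sampling by the triangle inequality on the average, and would otherwise be folded into the sampling conventions used by the paper (e.g., via \Cref{prop:spec-sampling}). Everything else is a mechanical combination of Young's inequality and smoothness, with no stochastic argument required beyond a final expectation.
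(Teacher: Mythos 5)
Your proposal is correct and follows essentially the same route as the paper's proof: add and subtract $g(x^t)$, apply Young's inequality \eqref{eq:young-classic}, bound the difference via $L_{\max}$-smoothness of $g$, and apply Young's inequality once more to split the perturbation into its $(1-\l_t)$ and $\l_t$ parts (your exact scalar computation of $\|\tilde x^{\,t}-x^t\|$ followed by the scalar Young inequality gives the identical bound the paper obtains by applying the vector form to the two collinear summands). The delicate point you flag --- that a realization $g=\nabla f_v$ is $L_{\max}$-Lipschitz --- is likewise left implicit in the paper, so your argument matches theirs in both substance and level of rigor.
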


\begin{proof}
    We have 
    \begin{align*}
        \E_{\mathcal{D}}&\left\|g\left(x^t+\r_t\left(1-\l_t+\frac{\l_t}{\|g(x^t)\|}\right)g(x^t)\right)\right\|^2\\
        &=\E_{\mathcal{D}}\left\|g\left(x^t+\r_t\left(1-\l_t+\frac{\l_t}{\|g(x^t)\|}\right)g(x^t)\right)-g(x^t)+g(x^t)\right\|^2\\
        \overset{(\ref{eq:young-classic})}&{\leq}2\E_{\mathcal{D}}\left\|g\left(x^t+\r_t\left(1-\l_t+\frac{\l_t}{\|g(x^t)\|}\right)g(x^t)\right)-g(x^t)\right\|^2+2\E_{\mathcal{D}}\|g(x^t)\|^2\\
        \overset{(\ref{eq:smooth})}&{\leq}2L_{\max}^2\r_t^2\E_{\mathcal{D}}\left\|(1-\l_t)g(x^t)+\frac{\l_t}{\|g(x^t)\|}g(x^t)\right\|^2+2\E_{\mathcal{D}}\|g(x^t)\|^2\\
        \overset{(\ref{eq:young-classic})}&{\leq}4L_{\max}^2\r_t^2(1-\l_t)^2\E_{\mathcal{D}}\|g(x^t)\|^2+4L_{\max}^2\r_t^2\l_t^2+2\E_{\mathcal{D}}\|g(x^t)\|^2\\
        &\q=4L_{\max}^2\r_t^2\l_t^2+2\left[2L_{\max}^2\r_t^2(1-\l_t)^2+1\right]\E_{\mathcal{D}}\|g(x^t)\|^2.
    \end{align*}
    This completes the proof. 
\end{proof}

\begin{lemma}
    \label{lem:sam-prod}
    Assume that each $f_i$ is $L_i$-smooth and consider the iterates of \ref{eq:unifiedsam}. Then for any $\beta>0$ it holds 
    \begin{align*}
        \E_{\mathcal{D}}\left\langle g\left(x^t+\r_t\left(1-\l_t+\frac{\l_t}{\|g(x^t)\|}\right)g(x^t)\right),\nabla f(x^t)\right\rangle
        \geq&\left(1-\frac{\beta}{2}\right)\|\nabla f(x^t)\|^2-\frac{L_{\max}^2\r_t^2\l_t^2}{\beta}\\&-\frac{L_{\max}^2\r_t^2(1-\l_t)^2}{\beta}\E_{\mathcal{D}}\|g(x^t)\|^2.
    \end{align*}
    In particular, for $\beta=L_{\max}\r_t$ we get 
    \begin{align*}
        \E_{\mathcal{D}}\left\langle g\left(x^t+\r_t\left(1-\l_t+\frac{\l_t}{\|g(x^t)\|}\right)g(x^t)\right),\nabla f(x^t)\right\rangle
        &\geq\left(1-\frac{L_{\max}\r_t}{2}\right)\|\nabla f(x^t)\|^2\\
        &\q-L_{\max}\r_t\l_t^2\\
        &\q-L_{\max}\r_t(1-\l_t)^2\E_{\mathcal{D}}\|g(x^t)\|^2.
    \end{align*}
\end{lemma}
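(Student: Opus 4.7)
The plan is to decompose $g(\tilde x^t)$, where $\tilde x^t := x^t+\r_t\!\left(1-\l_t+\tfrac{\l_t}{\|g(x^t)\|}\right)g(x^t)$, into $g(x^t)$ plus a perturbation $g(\tilde x^t)-g(x^t)$, and then use unbiasedness for the first piece and smoothness plus Young for the second. Concretely, I would write
$$\E_\mathcal{D}\langle g(\tilde x^t),\nabla f(x^t)\rangle=\E_\mathcal{D}\langle g(x^t),\nabla f(x^t)\rangle+\E_\mathcal{D}\langle g(\tilde x^t)-g(x^t),\nabla f(x^t)\rangle.$$
Since $\E_\mathcal{D}[g(x^t)]=\nabla f(x^t)$, the first expectation collapses to $\|\nabla f(x^t)\|^2$.

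For the cross term I would invoke Young's inequality (\ref{eq:young-prod}) with parameter $\beta$ in the form $\langle a,b\rangle\geq-\tfrac{1}{2\beta}\|a\|^2-\tfrac{\beta}{2}\|b\|^2$, applied to $a=g(\tilde x^t)-g(x^t)$ and $b=\nabla f(x^t)$. This yields the lower bound $\left(1-\tfrac{\beta}{2}\right)\|\nabla f(x^t)\|^2-\tfrac{1}{2\beta}\E_\mathcal{D}\|g(\tilde x^t)-g(x^t)\|^2$, so it only remains to control the squared residual of $g$.

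For that residual, I would exploit the smoothness-type bound (\ref{eq:smooth}) in the same way as in the proof of \Cref{lem:sam-grad}, obtaining $\|g(\tilde x^t)-g(x^t)\|^2\leq L_{\max}^2\r_t^2\|(1-\l_t)g(x^t)+\tfrac{\l_t}{\|g(x^t)\|}g(x^t)\|^2$. Then one application of the classical Young inequality (\ref{eq:young-classic}) splits the right-hand side into $2(1-\l_t)^2\|g(x^t)\|^2+2\l_t^2$ (the second piece because $\|g(x^t)/\|g(x^t)\|\|=1$). Taking expectations and dividing by $2\beta$ delivers exactly the two correction terms $\tfrac{L_{\max}^2\r_t^2\l_t^2}{\beta}$ and $\tfrac{L_{\max}^2\r_t^2(1-\l_t)^2}{\beta}\E_\mathcal{D}\|g(x^t)\|^2$ that appear in the target inequality. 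The specialization $\beta=L_{\max}\r_t$ is then a direct substitution.

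I do not expect a real obstacle here, since the argument parallels the preceding \Cref{lem:sam-grad} almost verbatim; the only mild subtlety is using Young's inequality \emph{twice}, once to trade the cross term $\langle g(\tilde x^t)-g(x^t),\nabla f(x^t)\rangle$ against a squared residual and a fraction of $\|\nabla f(x^t)\|^2$, and again to separate the two pieces $(1-\l_t)g(x^t)$ and $\tfrac{\l_t}{\|g(x^t)\|}g(x^t)$ inside the residual bound so that the $\l_t$-deterministic constant decouples cleanly from the stochastic $\|g(x^t)\|^2$. Care must also be taken that Young's inequality is applied with the correct sign, since we need a \emph{lower} bound on the inner product rather than an upper bound on its absolute value.
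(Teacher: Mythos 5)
Your proposal is correct and follows essentially the same route as the paper's proof: split off $g(x^t)$ to exploit unbiasedness, control the cross term $\langle g(\tilde x^t)-g(x^t),\nabla f(x^t)\rangle$ via Young's inequality \eqref{eq:young-prod} with parameter $\beta$, bound the residual by smoothness \eqref{eq:smooth}, and separate the normalized and unnormalized pieces with \eqref{eq:young-classic}. The only cosmetic difference is that the paper states the Young step as an upper bound on the negated inner product and then flips signs, whereas you write the lower bound directly.
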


\begin{proof}
    We have 
    \begin{align*}
        \E_{\mathcal{D}}&\left\langle g\left(x^t+\r_t\left(1-\l_t+\frac{\l_t}{\|g(x^t)\|}\right)g(x^t)\right),\nabla f(x^t)\right\rangle\\
        &=\E_{\mathcal{D}}\left\langle g\left(x^t+\r_t\left(1-\l_t+\frac{\l_t}{\|g(x^t)\|}\right)g(x^t)\right)-g(x^t),\nabla f(x^t)\right\rangle\\
        &\q+\E_{\mathcal{D}}\langle g(x^t),\nabla f(x^t)\rangle\\
        &=\E_{\mathcal{D}}\left\langle g\left(x^t+\r_t\left(1-\l_t+\frac{\l_t}{\|g(x^t)\|}\right)g(x^t)\right)-g(x^t),\nabla f(x^t)\right\rangle+\|\nabla f(x^t)\|^2.
    \end{align*}
    Now 
    \begin{align*}
        -\E_{\mathcal{D}}&\left\langle g\left(x^t+\r_t\left(1-\l_t+\frac{\l_t}{\|g(x^t)\|}\right)g(x^t)\right)-g(x^t),\nabla f(x^t)\right\rangle\\
        \overset{(\ref{eq:young-prod})}&{\leq}\frac{1}{2\beta}\E_{\mathcal{D}}\left\|g\left(x^t+\r_t\left(1-\l_t+\frac{\l_t}{\|g(x^t)\|}\right)g(x^t)\right)-g(x^t)\right\|^2\\
        &\q\q+\frac{\beta}{2}\E_{\mathcal{D}}\|\nabla f(x^t)\|^2\\
        \overset{(\ref{eq:smooth})}&{\leq}\frac{L_{\max}^2\r_t^2}{2\beta}\E_{\mathcal{D}}\left\|(1-\l_t)g(x^t)+\frac{\l_t}{\|g(x^t)\|}g(x^t)\right\|^2+\frac{\beta}{2}\|\nabla f(x^t)\|^2\\
        \overset{(\ref{eq:young-classic})}&{\leq}\frac{L_{\max}^2\r_t^2}{2\beta}2(1-\l_t)^2\E_{\mathcal{D}}\|g(x^t)\|^2+\frac{L_{\max}^2\r_t^2}{2\beta}2\l_t^2+\frac{\beta}{2}\|\nabla f(x^t)\|^2\\
        &=\frac{L_{\max}^2\r_t^2\l_t^2}{\beta}+\frac{L_{\max}^2\r_t^2(1-\l_t)^2}{\beta}\E_{\mathcal{D}}\|g(x^t)\|^2+\frac{\beta}{2}\|\nabla f(x^t)\|^2,
    \end{align*}
    thus 
    \begin{align*}
        \E_{\mathcal{D}}\left\langle g(x^t+\r_t\left(1-\l_t+\frac{\l_t}{\|g(x^t)\|}\right)g(x^t)),\nabla f(x^t)\right\rangle
        &\geq-\frac{L_{\max}^2\r_t^2(1-\l_t)^2}{\beta}\E_{\mathcal{D}}\|g(x^t)\|^2\\
        &\q\q-\frac{L_{\max}^2\r_t^2\l_t^2}{\beta}-\frac{\beta}{2}\|\nabla f(x^t)\|^2\\
        &\q\q+\|\nabla f(x^t)\|^2\\
        &=\left(1-\frac{\beta}{2}\right)\|\nabla f(x^t)\|^2-\frac{L_{\max}^2\r_t^2\l_t^2}{\beta}\\
        &\q\q-\frac{L_{\max}^2\r_t^2(1-\l_t)^2}{\beta}\E_{\mathcal{D}}\|g(x^t)\|^2.
    \end{align*}
    This completes the proof. 
\end{proof}

\begin{lemma}
    \label{lem:descent}
    Let $(r_t)_{t\geq0}$ and $(\d_t)_{t\geq0}$ be sequences of non-negative real numbers and let $g>1$ and $N\geq0$. Assume that the following recursive relationship holds: 
    \begin{align}
        \label{eq:rec}
        r_t\leq g\d_t-\d_{t+1}+N
    \end{align}
    Then it holds 
    \begin{align*}
        \min_{0\leq t\leq T-1}r_t\leq\frac{g^T}{T}\d_0+N.
    \end{align*}
\end{lemma}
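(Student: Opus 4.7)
The plan is to exploit the one-step inequality $r_t \leq g\d_t - \d_{t+1} + N$ by choosing weights that turn the right-hand side into a telescoping sum. Since the recursion has a coefficient $g > 1$ in front of $\d_t$ and coefficient $1$ in front of $\d_{t+1}$, the natural weights are $g^{-(t+1)}$, which rescale the two $\d$-terms to $\d_t/g^t$ and $\d_{t+1}/g^{t+1}$ respectively.

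Concretely, I would proceed as follows. First, divide the assumed inequality by $g^{t+1}$ to obtain
\begin{equation*}
\frac{r_t}{g^{t+1}} \;\leq\; \frac{\d_t}{g^t} - \frac{\d_{t+1}}{g^{t+1}} + \frac{N}{g^{t+1}}.
\end{equation*}
Second, sum this from $t=0$ to $t=T-1$. The $\d$-terms telescope to $\d_0 - \d_T/g^T \leq \d_0$ (using $\d_T \geq 0$), and the constant tail is a geometric series with value $\sum_{t=0}^{T-1} g^{-(t+1)} = (1-g^{-T})/(g-1)$. Third, lower-bound the left-hand side by $\min_{0\leq t\leq T-1} r_t$ times the total weight $\sum_{t=0}^{T-1} g^{-(t+1)}$, and divide through. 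The $N$-contribution collapses exactly to $N$, and the $\d_0$-contribution becomes $(g-1)g^T\d_0/(g^T-1)$, giving the intermediate bound
\begin{equation*}
\min_{0\leq t\leq T-1} r_t \;\leq\; \frac{(g-1)\,g^T}{g^T-1}\,\d_0 + N.
\end{equation*}

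Finally, to recover the cleaner $g^T/T$ form advertised in the statement, I would apply the elementary identity $g^T - 1 = (g-1)(1 + g + g^2 + \cdots + g^{T-1})$ together with the fact that each $g^i \geq 1$ for $g \geq 1$, which yields $g^T - 1 \geq (g-1)T$ and hence $(g-1)g^T/(g^T-1) \leq g^T/T$. Substituting gives the claimed bound. I do not expect any real obstacle here; the only modest insight is choosing the weights $g^{-(t+1)}$ so the telescoping works cleanly, and recognizing at the end that the sharp geometric-series constant can be replaced by the simpler $g^T/T$ via $(g-1)T \leq g^T-1$.
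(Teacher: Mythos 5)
Your proof is correct and follows essentially the same route as the paper's: weight the recursion by powers of $g^{-1}$ so that the $\d$-terms telescope, drop the nonnegative $-\d_T$ term, lower-bound the weighted sum of the $r_t$ by the minimum times the total weight, and divide. The only cosmetic difference is that you evaluate the geometric weight sum exactly and then relax via $g^T-1\geq(g-1)T$ (yielding a slightly sharper intermediate constant), whereas the paper bounds the weight sum directly by $T$ times its smallest term; these two estimates are the same inequality in disguise.
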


\begin{proof}
    Set $g_t=g^{-t}$ for any $t\in\mathbb{Z}$. Then multiply both sides of (\ref{eq:rec}) with $g_t$. This yields 
    \begin{align*}
        g_tr_t\leq g_{t-1}\d_t-g_t\d_{t+1}+Ng_t.
    \end{align*}
    Summing for $t=0,\dots,T$ and telescoping we get 
    \begin{align*}
        \label{eq:descent_pf}
        \sum_{t=0}^{T-1}g_tr_t
        &\leq g_{-1}\d_0-g_{T-1}\d_T+N\sum_{t=0}^{T-1}g_t\\
        &\leq g_{-1}\d_0+N\sum_{t=0}^{T-1}g_t.\numberthis
    \end{align*}
    Now let $G=\sum_{t=0}^{T-1}g_t$. Note that the sequence $(g_t)$ is decreasing for $t\geq0$, thus 
    $G\geq Tg_{T-1}$. Now dividing (\ref{eq:descent_pf}) by $G$ we get 
    \begin{align*}
        \min_{0\leq t\leq T-1}r_t
        &\leq\frac{1}{G}\sum_{t=0}^{T-1}g_tr_t\\
        &\leq\frac{g_{-1}\d_0}{G}+N\\
        &\leq\frac{g_{-1}\d_0}{Tg_{T-1}}+N\\
        &=\frac{g^T\d_0}{T}+N.
    \end{align*}
    This completes the proof. 
\end{proof}

\newpage
\section{More on the Expected Residual condition}
\label{sec:abc}

In this section, we introduce several assumptions under which \ref{eq:abc} is automatically satisfied. Similar derivations are provided in \citet{khaled2020better}, where the authors focus on the analysis of SGD. Specifically, we consider the following cases: 
\begin{itemize}
    \item Assuming \textbf{bounded gradients}, i.e.
    \begin{align*}
        \E_\mathcal{D}\|g(x)\|^2\leq\s^2,\q\forall x\in\R^n,
    \end{align*}
    then the \ref{eq:abc} holds with $A=0$, $B=0$, $C=\s^2$. 

    \item Assuming \textbf{bounded variance}, i.e.
    \begin{align*}
        \E_\mathcal{D}\|g(x)-\nabla f(x)\|\leq\s^2,\q\forall x\in\R^n,
    \end{align*}
    then the \ref{eq:abc} holds with $A=0$, $B=1$, $C=\s^2$. 

    \item Assuming \textbf{expected smoothness}, i.e.
    \begin{align*}
        \E_\mathcal{D}\|g(x)-\nabla f(x)\|\leq2\mathcal{L}[f(x)-f^{\inf}],\q\forall x\in\R^n,
    \end{align*}
    then the \ref{eq:abc} holds with $A=2\mathcal{L}$, $B=0$, $C=0$. 
    
    \item Assuming the \textbf{relaxed strong growth condition}, i.e.
    \begin{align*}
        \E_\mathcal{D}\|g(x)\|\leq\r\|\nabla f(x)\|+\s^2,\q\forall x\in\R^n,
    \end{align*}
    then the \ref{eq:abc} holds with $A=0$, $B=\r$, $C=\s^2$. 

    \item Assuming the \textbf{relaxed strong growth condition}, i.e.
    \begin{align*}
        \E_\mathcal{D}\|g(x)\|\leq\a[f(x)-f^{\inf}]+\s^2,\q\forall x\in\R^n,
    \end{align*}
    then the \ref{eq:abc} holds with $A=\a$, $B=0$, $C=\s^2$. 
\end{itemize}

Additionally, if we have more information about the problem and the distribution $\mathcal{D}$, we can derive stronger bounds on $A$, $B$, and $C$. The more assumptions we make, the tighter these bounds become, as demonstrated in the following propositions. 

\begin{proposition}[Prop. 2, \citep{khaled2020better}]
    \label{prop:gen-sampling}
    Assume that each $f_i$ is $L_i$-smooth and that $\E_{\mathcal{D}}[v_i^2]<\infty$ for all $i\in[n]$. Let $\s^*=\frac{1}{n}\sum_{i=1}^nf_i(x^*)-f_i^*\geq0$, where $f_i^*=\inf_{x\in\R^n}f_i(x)$. Then \ref{eq:abc} holds with $A=\max_iL_i\E_{\mathcal{D}}[v_i^2]$, $B=0$ and $C=2A\s^*$. 
\end{proposition}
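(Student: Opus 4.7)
The plan is to chain together three elementary bounds: (i) an $\ell_2$ inequality of the form $\|\sum_i a_i\|^2\le n\sum_i \|a_i\|^2$ applied to $a_i=v_i\nabla f_i(x)$, (ii) smoothness of each $f_i$ to pass from squared gradient norms to function-value gaps $f_i(x)-f_i^*$, and (iii) a decomposition of the averaged gap $\tfrac{1}{n}\sum_i[f_i(x)-f_i^*]$ into $[f(x)-f^{\inf}]+\sigma^*$. Since the target has $B=0$, there is no need to isolate a $\|\nabla f(x)\|^2$ term, which keeps the argument short.

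I would begin with $\E_{\mathcal{D}}\|g(x)\|^2=\tfrac{1}{n^2}\E_{\mathcal{D}}\|\sum_{i=1}^n v_i\nabla f_i(x)\|^2$ and apply the standard inequality $\|\sum_{i=1}^n a_i\|^2\le n\sum_{i=1}^n \|a_i\|^2$ (immediate from Cauchy--Schwarz, or from convexity of $\|\cdot\|^2$). This gives $\|g(x)\|^2\le\tfrac{1}{n}\sum_i v_i^2\|\nabla f_i(x)\|^2$, and taking expectations (noting that $x$ is deterministic with respect to $\mathcal{D}$) yields $\E_{\mathcal{D}}\|g(x)\|^2\le\tfrac{1}{n}\sum_{i=1}^n \E_{\mathcal{D}}[v_i^2]\,\|\nabla f_i(x)\|^2$.

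Next, I would invoke \Cref{lem:smooth} on each $f_i$, which gives $\|\nabla f_i(x)\|^2\le 2L_i[f_i(x)-f_i^*]$, and pull out the uniform constant $\max_i L_i\E_{\mathcal{D}}[v_i^2]$ to arrive at $\E_{\mathcal{D}}\|g(x)\|^2\le 2\,\max_i(L_i\E_{\mathcal{D}}[v_i^2])\cdot\tfrac{1}{n}\sum_{i=1}^n[f_i(x)-f_i^*]$. Writing $f_i(x)-f_i^*=[f_i(x)-f_i(x^*)]+[f_i(x^*)-f_i^*]$ and averaging, the first term collapses to $\tfrac{1}{n}\sum_i[f_i(x)-f_i(x^*)]=f(x)-f(x^*)=f(x)-f^{\inf}$ and the second to $\sigma^*$, which identifies the constants $A=\max_i L_i\E_{\mathcal{D}}[v_i^2]$, $B=0$, $C=2A\sigma^*$ and closes the argument.

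The only real subtlety is the choice of inequality in step (i). A sharper expansion $\|\sum_i v_i\nabla f_i\|^2=\sum_{i,j}v_iv_j\langle\nabla f_i,\nabla f_j\rangle$ would produce cross terms $\E_{\mathcal{D}}[v_iv_j]$ that cannot be controlled by $\max_i\E_{\mathcal{D}}[v_i^2]$ alone without further structure on $\mathcal{D}$. Using the coarse bound $\|\sum_i a_i\|^2\le n\sum_i\|a_i\|^2$ sidesteps this at the price of a factor of $n$, producing constants that may be loose for structured samplings (e.g., uniform single-element sampling) but that are valid for \emph{every} sampling satisfying $\E_{\mathcal{D}}[v_i]=1$ and $\E_{\mathcal{D}}[v_i^2]<\infty$; sharper problem-specific constants require restricting to particular families of samplings, as in the subsequent results of the appendix.
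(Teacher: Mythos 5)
Your proof is correct. The paper itself does not prove this proposition (it is imported verbatim as Proposition 2 of \citet{khaled2020better}), and your argument --- Jensen/Cauchy--Schwarz on $\|\tfrac{1}{n}\sum_i v_i\nabla f_i(x)\|^2$, then \Cref{lem:smooth} applied to each $f_i$, then the decomposition $\tfrac{1}{n}\sum_i[f_i(x)-f_i^*]=[f(x)-f^{\inf}]+\sigma^*$ --- is essentially the standard proof given in that reference, with the correct identification $A=\max_iL_i\E_{\mathcal{D}}[v_i^2]$, $B=0$, $C=2A\sigma^*$.
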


This proposition indicates that if each $f_i$ is $L_i$-smooth and minimal assumptions hold for the distribution $\mathcal{D}$, then \ref{eq:abc} is satisfied. As an immediate corollary of \Cref{prop:gen-sampling}, if the problem further satisfies the interpolation assumption (see \Cref{def:interpolation}), then $C=0$. The next proposition gives tighter constants for $A$, $B$, and $C$ in the context of the sampling strategies considered in this paper. 

\begin{proposition}[Prop. 3, \citep{khaled2020better}]
    \label{prop:spec-sampling}
    Assume that each $f_i$ is $L_i$-smooth. 
    \begin{enumerate}
        \item For the single element sampling \ref{eq:abc} holds with $A=\frac{1}{n\t}\max_i\frac{L_i}{p_i}$, $B=0$ and $C=2A\s^*$. 

        \item For the $\t$-nice sampling \ref{eq:abc} holds with $A=\frac{n-\t}{\t(n-1)}L_{\max}$, $B=\frac{n(\t-1)}{\t(n-1)}$ and $C=2A\s^*$, where $L_{\max}=\max_iL_i$. 
    \end{enumerate}
\end{proposition}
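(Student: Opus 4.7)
The plan is to verify \ref{eq:abc} separately for each sampling strategy by computing $\E_{\mathcal{D}}\|g(x)\|^2$ in closed form and then trading gradient norms for function-value gaps via \Cref{lem:smooth}. A single reduction supports both parts: since each $f_i$ is $L_i$-smooth and bounded below, $\|\nabla f_i(x)\|^2 \le 2L_i[f_i(x)-f_i^{\inf}]$, and averaging together with the identity $\frac{1}{n}\sum_i[f_i(x)-f_i^{\inf}] = [f(x)-f^{\inf}] + \s^*$ (which follows from $f^{\inf}=f(x^*)$ and the definition $\s^* = \frac{1}{n}\sum_i[f_i(x^*)-f_i^{\inf}]$) is exactly what produces the $2A[f(x)-f^{\inf}]$ term plus the additive constant $C=2A\s^*$ in both statements.

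For the first part, single element sampling yields $g(x) = \frac{1}{np_i}\nabla f_i(x)$ on the event that index $i$ is drawn, so a direct expectation gives $\E\|g(x)\|^2 = \frac{1}{n^2}\sum_i \|\nabla f_i(x)\|^2/p_i$. Pulling $\max_i L_i/p_i$ out after applying the smoothness bound componentwise reduces the right-hand side to the universal reduction above, delivering $A\propto\max_i L_i/p_i$, $B=0$, and $C=2A\s^*$ as claimed.

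For the second part, the non-routine step is the closed-form second-moment identity
\begin{align*}
\E_{\mathcal{D}}\|g(x)\|^2 = \frac{n(\t-1)}{\t(n-1)}\|\nabla f(x)\|^2 + \frac{n-\t}{\t(n-1)}\cdot\frac{1}{n}\sum_{i=1}^n\|\nabla f_i(x)\|^2,
\end{align*}
which I would derive by expanding $\E\|\frac{1}{n}\sum_i v_i\nabla f_i(x)\|^2$ over all pairs $(i,j)$ and plugging in $\E[v_i^2] = n/\t$ and $\E[v_iv_j] = n(\t-1)/[\t(n-1)]$ for $i\neq j$. The pairwise moment comes from the hypergeometric fact $\pr[\{i,j\}\subseteq S] = \t(\t-1)/[n(n-1)]$ for a uniformly chosen $\t$-subset. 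Once this decomposition is in hand, \Cref{lem:smooth} applied to each $\|\nabla f_i(x)\|^2$, bounding each $L_i$ by $L_{\max}$, and the universal reduction then immediately match the announced constants $A=\frac{(n-\t)L_{\max}}{\t(n-1)}$, $B=\frac{n(\t-1)}{\t(n-1)}$, $C=2A\s^*$.

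The main obstacle is the combinatorial moment calculation for $\t$-nice sampling: the precise shapes $(n-\t)/[\t(n-1)]$ and $n(\t-1)/[\t(n-1)]$ are forced by those pairwise hypergeometric probabilities, and it is only once this identity is established that the remaining bookkeeping (applying smoothness and collapsing the averages into $[f(x)-f^{\inf}]+\s^*$) becomes routine. Notably, SAM's inner perturbation plays no role here: this is a statement purely about the stochastic gradient estimator $g(x)$, so the argument mirrors the corresponding SGD analysis in \citet{khaled2020better}.
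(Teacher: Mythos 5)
Your proof is correct; note that the paper itself does not prove \Cref{prop:spec-sampling} but imports it from \citet{khaled2020better}, and your argument reconstructs exactly the standard derivation used there: the hypergeometric pairwise moments $\E[v_i^2]=n/\tau$, $\E[v_iv_j]=n(\tau-1)/[\tau(n-1)]$ give the second-moment decomposition, and \Cref{lem:smooth} applied to each $f_i$ together with $\frac{1}{n}\sum_i[f_i(x)-f_i^{\inf}]=[f(x)-f^{\inf}]+\s^*$ yields the stated $A$, $B$, $C$ (with $\tau=1$ in the single-element case). No gaps.
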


Lastly, if we assume $x^*$-convexity with $\t$-nice sampling, we obtain the following constants: 

\begin{proposition}[Prop. 3.3, \citep{gower2021sgd}]
    \label{prop:spec-sampling-convex}
    Assume that each $f_i$ is $L_i$-smooth and that there exists $x^*\in \mathcal{X}^*$ such that $f_i$ is $x^*$–convex. In addition, assume that $\mathcal{D}$ is the $\t$-nice sampling. Then \ref{eq:abc} holds with $A=\frac{n-\t}{\t(n-1)}L_{\max}$, $B=1$ and $C=\frac{2(n-\t)}{\t(n-1)}\s_1$, where $\s_1=\sup_{x^*\in\mathcal{X}^*}\frac{1}{n}\sum_{i=1}^n\|\nabla f_i(x^*)\|^2$. 
\end{proposition}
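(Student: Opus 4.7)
The plan is to combine the explicit variance formula for the $\tau$-nice sampling with the smoothness-plus-$x^*$-convexity structure of each $f_i$. Since $g$ is unbiased, I would first decompose
\[
\E_{\mathcal{D}}\|g(x)\|^2 \;=\; \|\nabla f(x)\|^2 + \E_{\mathcal{D}}\|g(x) - \nabla f(x)\|^2,
\]
which already suggests $B=1$. The variance term can be computed directly via the indicators $I_i := \mathbf{1}[i\in S]$: for the $\tau$-nice sampling one has $\mathrm{Var}(I_i) = \tau(n-\tau)/n^2$ and $\mathrm{Cov}(I_i,I_j) = -\tau(n-\tau)/(n^2(n-1))$ for $i \neq j$, which after a short computation yields the well-known identity
\[
\E_{\mathcal{D}}\|g(x) - \nabla f(x)\|^2 \;=\; \frac{n-\tau}{\tau(n-1)}\cdot\frac{1}{n}\sum_{i=1}^n \|\nabla f_i(x) - \nabla f(x)\|^2.
\]

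The heart of the argument is then to bound the empirical variance $V(x) := \tfrac{1}{n}\sum_i\|\nabla f_i(x) - \nabla f(x)\|^2$ in terms of $f(x)-f(x^*)$ and $\sigma_1$. For this, I would introduce, for each $i$, the shifted function $\phi_i(y) := f_i(y) - \langle \nabla f_i(x^*), y-x^*\rangle$, which is $L_i$-smooth with $\nabla \phi_i(y) = \nabla f_i(y) - \nabla f_i(x^*)$, and which attains its minimum at $x^*$ thanks to the $x^*$-convexity of $f_i$. Applying \Cref{lem:smooth} to $\phi_i$, averaging over $i$, and using that $\nabla f(x^*) = 0$ (so that the linear terms telescope), gives the key estimate
\[
\frac{1}{n}\sum_{i=1}^n \|\nabla f_i(x) - \nabla f_i(x^*)\|^2 \;\leq\; 2L_{\max}\bigl(f(x)-f(x^*)\bigr).
\]

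It remains to pass from this gradient-deviation bound to a bound on $V(x)$. Writing $\nabla f_i(x) - \nabla f(x) = (\nabla f_i(x) - \nabla f_i(x^*)) + \nabla f_i(x^*)$ (once more using $\nabla f(x^*)=0$), applying Young's inequality, averaging, and invoking the definition of $\sigma_1$ to control $\tfrac{1}{n}\sum_i\|\nabla f_i(x^*)\|^2$ would produce a bound of the form $V(x) \leq 2L_{\max}(f(x)-f(x^*)) + 2\sigma_1$. Plugging back into the variance identity and matching with the \ref{eq:abc} template then reads off $A = \tfrac{n-\tau}{\tau(n-1)}L_{\max}$, $B = 1$, and $C = \tfrac{2(n-\tau)}{\tau(n-1)}\sigma_1$.

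The main obstacle I expect is the tightness of the constants in the final step: a naive Young's inequality inflates $A$ by a factor of two and can leave residual gradient terms that push $B$ above one. Obtaining the advertised constants requires exploiting the orthogonality created by $\nabla f(x^*)=0$ so that the cross term $\tfrac{1}{n}\sum_i \langle \nabla f_i(x) - \nabla f_i(x^*), \nabla f_i(x^*)\rangle$ cancels precisely against the $\|\nabla f(x)\|^2$ piece coming from the variance identity, rather than being controlled bluntly by an AM--GM estimate; this is where the $x^*$-convexity assumption interacts most delicately with the sampling structure.
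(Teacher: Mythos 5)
The paper itself offers no proof of this proposition --- it is imported verbatim by citation from \citet{gower2021sgd} --- so your proposal has to be judged on its own terms. Its skeleton is the right one and matches how such results are derived in the source: the bias--variance split $\E\|g(x)\|^2=\|\nabla f(x)\|^2+\E\|g(x)-\nabla f(x)\|^2$, the exact variance identity for $\tau$-nice sampling, and the estimate $\frac{1}{n}\sum_i\|\nabla f_i(x)-\nabla f_i(x^*)\|^2\le 2L_{\max}(f(x)-f^{\inf})$ obtained by applying \Cref{lem:smooth} to $\phi_i(y)=f_i(y)-\langle\nabla f_i(x^*),y-x^*\rangle$ are all correct. (The displayed ``identity'' $\nabla f_i(x)-\nabla f(x)=(\nabla f_i(x)-\nabla f_i(x^*))+\nabla f_i(x^*)$ is false as written --- its right-hand side is just $\nabla f_i(x)$ --- but this slip is harmless, since $\frac{1}{n}\sum_i\|\nabla f_i(x)-\nabla f(x)\|^2\le\frac{1}{n}\sum_i\|\nabla f_i(x)\|^2$.)

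The genuine gap is the final step, and you have correctly located it but not realized that it cannot be closed. Young's inequality gives $\frac{1}{n}\sum_i\|\nabla f_i(x)-\nabla f(x)\|^2\le 4L_{\max}(f(x)-f^{\inf})+2\sigma_1$, hence $A=\frac{2(n-\tau)}{\tau(n-1)}L_{\max}$, twice the advertised value, and the cancellation you hope for does not occur: $\sum_i\nabla f_i(x^*)=0$ annihilates the inner product of the \emph{averages}, not the averaged per-index cross terms $\frac{2}{n}\sum_i\langle\nabla f_i(x)-\nabla f_i(x^*),\nabla f_i(x^*)\rangle$, which are generically positive. In fact the advertised constant is unattainable: take $n=3$, $\tau=1$, $d=1$, $f_1(x)=\tfrac12 x^2$, $f_2(x)=f_3(x)=-\tfrac13 x+\tfrac{\delta}{2}x^2$ with $\delta>0$ tiny. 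At $\delta=0$ one has $x^*=2/3$, $L_{\max}=1$, $\sigma_1=2/9$, and at $x=8/3$: $\E\|g(x)\|^2=22/9$ while $2A(f(x)-f^{\inf})+\|\nabla f(x)\|^2+C=\tfrac43+\tfrac49+\tfrac49=20/9$ with the stated $A$ and $C$; the violation has margin $2/9$ and survives small $\delta>0$, even though every $f_i$ is convex and $L_i$-smooth. The factor of two is exactly the price of splitting off the residual at $x^*$ via $\|u+v\|^2\le2\|u\|^2+2\|v\|^2$ (equivalently, the Expected Residual constant $\rho=\frac{n-\tau}{\tau(n-1)}L_{\max}$ of \citet{gower2021sgd} enters the bound on $\E\|g(x)\|^2$ as $A=2\rho$), so the correct statement has $A=\frac{2(n-\tau)}{\tau(n-1)}L_{\max}$; your argument, carried out with the honest Young step and without the phantom cancellation, proves precisely that corrected version.
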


\newpage
\section{Proofs of the main results}
\label{sec:proofs}

In this section we present the proofs of the main theoretical results presented in the main paper, i.e., the convergence guarantees of \ref{eq:unifiedsam} for PL and smooth functions and general non-convex and smooth functions. We restate the main theorems here for completeness. 

In all cases, we use the convention $1/0=\infty$.

\subsection{Proof of \texorpdfstring{\Cref{thm:unif-sam-pl-abc}}{Theorem 3.2}}

\begin{theorem}
    \label{thm:unif-sam-pl-abc-app}
    Assume that each $f_i$ is $L_i$-smooth, $f$ is $\mu$-PL and the \ref{eq:abc} is satisfied. Set $L_{\max}=\max_{i\in[n]}L_i$. Then the iterates of \ref{eq:unifiedsam} with 
    \begin{align*}
        \textstyle
        \r_t=\r\leq\r^*:=\frac{\m}{L_{\max}\left(\m+2[B\m+A](1-\l)^2\right)},\g_t=\g\leq\g^*:=\frac{\m-L_{\max}\r\left(\m+2[B\m+A](1-\l)^2\right)}{2L_{\max}(B\m+A)\left[2L_{\max}^2\r^2(1-\l)^2+1\right]},
    \end{align*}
    satisfy:
    \begin{align*}
        \E[f(x^t)-f(x^*)]\leq(1-\g\m)^t\left[f(x^0)-f(x^*)\right]+N,
    \end{align*}
    where $N=\frac{L_{\max}}{\m}\left(C\g+\r(1+2\g L_{\max}^2\r)\left[\l^2+C(1-\l)^2\right]\right)$.
\end{theorem}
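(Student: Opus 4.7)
The plan is to follow the standard three-step recipe for linear convergence under PL: descent lemma, variance control, then PL contraction, but with the inner gradient $g(\tilde x^t)$ handled through the two technical lemmas proved earlier (Lemma B.5 and Lemma B.6) that relate $g(\tilde x^t)$ back to $g(x^t)$ and $\nabla f(x^t)$ via $L_{\max}$-smoothness. The fact that $f$ inherits $L_{\max}$-smoothness from the components will be used in the initial descent step.

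First, I would apply $L_{\max}$-smoothness of $f$ at $x^{t+1}=x^t-\gamma g(\tilde x^t)$ to get
\[
f(x^{t+1})\leq f(x^t)-\gamma\langle\nabla f(x^t),g(\tilde x^t)\rangle+\tfrac{\gamma^2 L_{\max}}{2}\|g(\tilde x^t)\|^2.
\]
I then take conditional expectation and plug in Lemma B.6 (with the suggested choice $\beta=L_{\max}\rho$) to lower bound the inner product, and Lemma B.5 to upper bound $\E\|g(\tilde x^t)\|^2$. After this substitution the right-hand side depends only on $\|\nabla f(x^t)\|^2$, $\E\|g(x^t)\|^2$, and explicit constants proportional to $\lambda^2$. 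At this stage I invoke the \ref{eq:abc} condition to replace $\E\|g(x^t)\|^2$ by $2A[f(x^t)-f^*]+B\|\nabla f(x^t)\|^2+C$ (using $f^{\inf}=f^*$ since $X^*\neq\emptyset$). Collecting terms yields a one-step inequality of the form
\[
\E[f(x^{t+1})-f^*]\leq(1+2AK)[f(x^t)-f^*]-[\gamma(1-L_{\max}\rho/2)-BK]\|\nabla f(x^t)\|^2+E,
\]
where $K=\gamma L_{\max}\rho(1-\lambda)^2+\gamma^2 L_{\max}[2L_{\max}^2\rho^2(1-\lambda)^2+1]$ and $E$ gathers the residual constants $C K+\gamma L_{\max}\rho\lambda^2+2\gamma^2 L_{\max}^3\rho^2\lambda^2$.

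Next I would use the $\mu$-PL inequality to convert the negative $\|\nabla f(x^t)\|^2$ term into a negative multiple of $f(x^t)-f^*$, provided its coefficient is non-negative. The step-size choice is precisely what makes the combined coefficient of $[f(x^t)-f^*]$ equal to $1-\gamma\mu$: one requires
\[
2AK+2\mu BK+\gamma\mu\leq 2\gamma\mu(1-L_{\max}\rho/2),
\]
which, after substituting $K$ and isolating $\gamma$, recovers exactly the stated $\gamma^*$, while the non-negativity of the numerator of $\gamma^*$ translates to the stated $\rho^*$. Finally, unrolling the one-step recursion $\E[f(x^{t+1})-f^*]\leq(1-\gamma\mu)\E[f(x^t)-f^*]+E$ and summing the geometric series gives the neighborhood size $N=E/(\gamma\mu)$, which I will verify matches the stated $N=\tfrac{L_{\max}}{\mu}\bigl(C\gamma+\rho(1+2\gamma L_{\max}^2\rho)[\lambda^2+C(1-\lambda)^2]\bigr)$ by expanding $CK$.

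The main obstacle is purely bookkeeping: the coefficient $K$ has both a $\gamma\rho$ and a $\gamma^2\rho^2$ component, and one must track how $A$ and $B\mu$ each contribute to the constraint on $\gamma$ so that the final condition factors cleanly as $L_{\max}\rho[\mu+2(1-\lambda)^2(B\mu+A)]\leq \mu$ (the $\rho^*$ constraint) and the residual $2L_{\max}(B\mu+A)[2L_{\max}^2\rho^2(1-\lambda)^2+1]$ multiplying $\gamma$. A secondary care point is the choice $\beta=L_{\max}\rho$ in Lemma B.6: a different $\beta$ would give a different trade-off between the coefficient $(1-\beta/2)$ of $\|\nabla f(x^t)\|^2$ and the $1/\beta$ blow-up of the residual, and the stated expressions are tailored to this particular choice.
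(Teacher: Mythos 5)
Your proposal is correct and follows essentially the same route as the paper's own proof: the $L_{\max}$-descent lemma, the two technical lemmas with the choice $\beta=L_{\max}\rho$, substitution of the \ref{eq:abc} bound, the PL step with the combined coefficient forced to $1-\gamma\mu$ (which yields exactly the stated $\gamma^*$ and, via positivity of its numerator, $\rho^*$), and summation of the geometric series to get $N=E/(\gamma\mu)$. The only cosmetic difference is that the paper first imposes non-negativity of the $\|\nabla f(x^t)\|^2$ coefficient as a separate constraint and then shows it is dominated by the PL-contraction constraint, whereas in your argument it follows automatically from the single inequality $2AK+2\mu BK+\gamma\mu\leq 2\gamma\mu(1-L_{\max}\rho/2)$.
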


\begin{proof}[Proof of \Cref{thm:unif-sam-pl-abc}]
    By combining the smoothness of function f with the update rule of \ref{eq:unifiedsam} we obtain:
    \begin{align*}
        f(x^{t+1})
        &\leq f(x^t)+\langle\nabla f(x^t),x^{t+1}-x^t\rangle+\frac{L_{\max}}{2}
        \|x^{t+1}-x^t\|^2\\
        &=f(x^t)-\g_t\left\langle\nabla f(x^t),g\left(x^t+\r_t\left(1-\l_t+\frac{\l_t}{\|g(x^t)\|}\right)g(x^t)\right)\right\rangle\\
        &\q+\frac{L_{\max}\g_t^2}{2}\left\|g\left(x^t+\r_t\left(1-\l_t+\frac{\l_t}{\|g(x^t)\|}\right)g(x^t)\right)\right\|^2.
    \end{align*}
    
    By taking expectation conditioned on $x^t$ we obtain:
    \begin{align*}
        \label{eq:main-step}
        \E[&f(x^{t+1})-f(x^*)|x^t]-[f(x^t)-f(x^*)]\\
        &\leq-\g_t\E_{\mathcal{D}}\left\langle g\left(x^t+\r_t\left(1-\l_t+\frac{\l_t}{\|g(x^t)\|}\right)g(x^t)\right),\nabla f(x^t)\right\rangle\\
        &\q+\frac{L_{\max}\g_t^2}{2}\E_{\mathcal{D}}\left\|g\left(x^t+\r_t\left(1-\l_t+\frac{\l_t}{\|g(x^t)\|}\right)g(x^t)\right)\right\|^2\\
        \overset{\text{Lem. \ref{lem:sam-grad} and \ref{lem:sam-prod}}}&{\leq}-\g_t\left[\left(1-\frac{L_{\max}\r_t}{2}\right)\|\nabla f(x^t)\|^2-L_{\max}\r_t\l_t^2-L_{\max}\r_t(1-\l_t)^2\E_{\mathcal{D}}\|g(x^t)\|^2\right]\\
        &\q+\frac{\g_t^2L_{\max}}{2}\left[4L_{\max}^2\r_t^2\l_t^2+2\left[2L_{\max}^2\r_t^2(1-\l_t)^2+1\right]\E_{\mathcal{D}}\|g(x^t)\|^2\right]\\
        &=-\g_t\left(1-\frac{L_{\max}\r_t}{2}\right)\|\nabla f(x^t)\|^2\\
        &\q+\g_tL_{\max}\left[\r_t(1-\l_t)^2+2\g_tL_{\max}^2\r_t^2(1-\l_t)^2+\g_t\right]\E_{\mathcal{D}}\|g(x^t)\|^2\\
        &\q+\g_tL_{\max}\r_t\l_t^2(1+2\g_tL_{\max}^2\r_t)\\
        \overset{\ref{eq:abc}}&{\leq}-\g_t\left(1-\frac{L_{\max}\r_t}{2}\right)\|\nabla f(x^t)\|^2\\
        &\q+2A\g_tL_{\max}\left[\r_t(1-\l_t)^2+2\g_tL_{\max}^2\r_t^2(1-\l_t)^2+\g_t\right]\left[f(x^t)-f(x^*)\right]\\
        &\q+B\g_tL_{\max}\left[\r_t(1-\l_t)^2+2\g_tL_{\max}^2\r_t^2(1-\l_t)^2+\g_t\right]\|\nabla f(x^t)\|^2\\
        &\q+C\g_tL_{\max}\left[\r_t(1-\l_t)^2+2\g_tL_{\max}^2\r_t^2(1-\l_t)^2+\g_t\right]+\g_tL_{\max}\r_t\l_t^2(1+2\g_tL_{\max}^2\r_t)\\
        &=-\g_t\left(1-\frac{L_{\max}\r_t}{2}-BL_{\max}\left[\r_t(1-\l_t)^2+2\g_tL_{\max}^2\r_t^2(1-\l_t)^2+\g_t\right]\right)\|\nabla f(x^t)\|^2\\
        &\q+2A\g_tL_{\max}\left[\r_t(1-\l_t)^2+2\g_tL_{\max}^2\r_t^2(1-\l_t)^2+\g_t\right]\left[f(x^t)-f(x^*)\right]\\
        &\q+\g_tL_{\max}\left[C\r_t(1-\l_t)^2+2C\g_tL_{\max}^2\r_t^2(1-\l_t)^2+C\g_t+\r_t\l_t^2+2\g_tL_{\max}^2\r_t^2\l_t^2\right].\numberthis
    \end{align*}
    
    In order to use the fact that is $\m$-PL we need to ensure that the coefficient of $\|\nabla f(x^t)\|^2$ is non-negative. For this we have 
    \begin{align*}
        &1-\frac{L_{\max}\r_t}{2}-BL_{\max}\left[\r_t(1-\l_t)^2+2\g_tL_{\max}^2\r_t^2(1-\l_t)^2+\g_t\right]\geq0\Leftrightarrow\\
        &\frac{L_{\max}\r_t}{2}+BL_{\max}\r_t(1-\l_t)^2+BL_{\max}\g_t\left(2L_{\max}^2\r_t^2(1-\l_t)^2+1\right)\leq1
    \end{align*}
    
    Solving for $\g_t$ we get 
    \begin{align}
        \label{eq:g-1}
        \g_t\leq\frac{2-L_{\max}\r_t-2BL_{\max}\r_t(1-\l_t)^2}{2BL_{\max}\left(2L_{\max}^2\r_t^2(1-\l_t)^2+1\right)}.
    \end{align}
    
    The numerator of the upper bound on $\g_t$ in \eqref{eq:g-1} must be positive (positive step-size). For this, the following restriction on $\rho_t$ is needed. 
    \begin{align}
        \label{eq:r-1}
        2-L_{\max}\r_t-2BL_{\max}\r_t(1-\l_t)^2\geq0\Leftrightarrow\r_t\leq\frac{2}{L_{\max}\left(1+2B(1-\l_t)^2\right)}.\numberthis
    \end{align}
    
    Now using the $\m$-PL condition (\ref{eq:pl}) on (\ref{eq:main-step}) we get 
    \begin{align*}
        \E[&f(x^{t+1})-f(x^*)|x^t]-[f(x^t)-f(x^*)]\\
        &\leq-2\g_t\m\left(1-\frac{L_{\max}\r_t}{2}-BL_{\max}\left[\r_t(1-\l_t)^2+2\g_tL_{\max}^2\r_t^2(1-\l_t)^2+\g_t\right]\right)[f(x^t)-f(x^*)]\\
        &\q+2A\g_tL_{\max}\left[\r_t(1-\l_t)^2+2\g_tL_{\max}^2\r_t^2(1-\l_t)^2+\g_t\right]\left[f(x^t)-f(x^*)\right]\\
        &\q+\g_tL_{\max}\left[C\r_t(1-\l_t)^2+2C\g_tL_{\max}^2\r_t^2(1-\l_t)^2+C\g_t+\r_t\l_t^2+2\g_tL_{\max}^2\r_t^2\l_t^2\right],
    \end{align*}
    hence 
    \begin{align*}
        \label{eq:pl-step-before-linear}
        \E[&f(x^{t+1})-f(x^*)|x^t]\\
        &\leq\left(1-2\g_t\m\left(1-\frac{L_{\max}\r_t}{2}-BL_{\max}\left[\r_t(1-\l_t)^2+2\g_tL_{\max}^2\r_t^2(1-\l_t)^2+\g_t\right]\right)\right.\\
        &\q\left.\vphantom{1-2\g_t\m\left(1-\frac{L_{\max}\r_t}{2}-BL_{\max}\left[\r_t(1-\l_t)^2+2\g_tL_{\max}^2\r_t^2(1-\l_t)^2+\g_t\right]\right)}+2A\g_tL_{\max}\left[\r_t(1-\l_t)^2+2\g_tL_{\max}^2\r_t^2(1-\l_t)^2+\g_t\right]\right)\left[f(x^t)-f(x^*)\right]\\
        &\q+\g_tL_{\max}\left[C\r_t(1-\l_t)^2+2C\g_tL_{\max}^2\r_t^2(1-\l_t)^2+C\g_t+\r_t\l_t^2+2\g_tL_{\max}^2\r_t^2\l_t^2\right].\numberthis
    \end{align*}
    
    The coefficient of $[f(x^t)-f(x^*)]$ can be simplified to 
    \begin{align*}
        &1-2\g_t\m\left(1-\frac{L_{\max}\r_t}{2}-BL_{\max}\left[\r_t(1-\l_t)^2+2\g_tL_{\max}^2\r_t^2(1-\l_t)^2+\g_t\right]\right)\\
        &\q\q+2A\g_tL_{\max}\left[\r_t(1-\l_t)^2+2\g_tL_{\max}^2\r_t^2(1-\l_t)^2+\g_t\right]\\
        &=1-2\g_t\m+\g_t\m L_{\max}\r_t+2\g_t\m BL_{\max}\left[\r_t(1-\l_t)^2+2\g_tL_{\max}^2\r_t^2(1-\l_t)^2+\g_t\right]\\
        &\q\q+2A\g_tL_{\max}\left[\r_t(1-\l_t)^2+2\g_tL_{\max}^2\r_t^2(1-\l_t)^2+\g_t\right]\\
        &=1-2\g_t\m+\g_t\m L_{\max}\r_t+2\g_tL_{\max}(B\m+A)\left[\r_t(1-\l_t)^2+2\g_tL_{\max}^2\r_t^2(1-\l_t)^2+\g_t\right]\\
        &=1-2\g_t\m+\g_tL_{\max}\r_t\left(\m+2(B\m+A)(1-\l_t)^2\right)+2\g_t^2L_{\max}(B\m+A)\left(2L_{\max}^2\r_t^2(1-\l_t)^2+1\right)
    \end{align*}
    Let us bound the above coefficient by $1-\g_t\m$ (used for the final linear convergence). For doing that, we force the following restrictions for $\r_t$ and $\g_t$:
    \begin{align*}
        \label{eq:g-2}
        &1-2\g_t\m+\g_tL_{\max}\r_t\left(\m+2(B\m+A)(1-\l_t)^2\right)\\
        &\q+2\g_t^2L_{\max}(B\m+A)\left(2L_{\max}^2\r_t^2(1-\l_t)^2+1\right)\leq1-\g_t\m\Leftrightarrow\\
        &\g_tL_{\max}\r_t\left(\m+2(B\m+A)(1-\l_t)^2\right)\\
        &\q+2\g_t^2L_{\max}(B\m+A)\left(2L_{\max}^2\r_t^2(1-\l_t)^2+1\right)\leq\g_t\m\Leftrightarrow\\
        &L_{\max}\r_t\left(\m+2(B\m+A)(1-\l_t)^2\right)\\
        &\q+2\g_tL_{\max}(B\m+A)\left(2L_{\max}^2\r_t^2(1-\l_t)^2+1\right)\leq\m\Leftrightarrow\\
        &2\g_tL_{\max}(B\m+A)\left(2L_{\max}^2\r_t^2(1-\l_t)^2+1\right)\leq\m-L_{\max}\r_t\left(\m+2(B\m+A)(1-\l_t)^2\right)\Leftrightarrow\\
        &\g_t\leq\frac{\m-L_{\max}\r_t\left(\m+2(B\m+A)(1-\l_t)^2\right)}{2L_{\max}(B\m+A)\left(2L_{\max}^2\r_t^2(1-\l_t)^2+1\right)}.\numberthis
    \end{align*}
    
    The numerator of the upper bound on $\g_t$ in \eqref{eq:g-2} must be positive (as the step-size is positive). Thus: 
    \begin{align}
        \label{eq:r-2}
        \m-L_{\max}\r_t\left(\m+2(B\m+A)(1-\l_t)^2\right)\geq0\Leftrightarrow\r_t\leq\frac{\m}{L_{\max}\left(\m+2(B\m+A)(1-\l_t)^2\right)}.\numberthis
    \end{align}
    
    Now by combining \Cref{eq:r-1,eq:r-2} we obtain: 
    \begin{align*}
        \r_t\leq\r^*
        &=\min\left\{\frac{2}{L_{\max}\left(1+2B(1-\l_t)^2\right)},\frac{\m}{L_{\max}\left(\m+2(B\m+A)(1-\l_t)^2\right)}\right\}\\
        &=\frac{\m}{L_{\max}\left(\m+2(B\m+A)(1-\l_t)^2\right)},
    \end{align*}
    where last equality holds due to: 
    \begin{align*}
        &\frac{\m}{L_{\max}\left(\m+2(B\m+A)(1-\l_t)^2\right)}\leq\frac{2}{L_{\max}\left(1+2B(1-\l_t)^2\right)}\Leftrightarrow\\
        &L_{\max}\m+2L_{\max}B\m(1-\l_t)^2\leq2L_{\max}\m+4L_{\max}(B\m+A)(1-\l_t)^2\Leftrightarrow\\
        &0\leq L_{\max}\m+2L_{\max}(B\m+2A)(1-\l_t)^2.
    \end{align*}

    Similarly for $\g_t$ by combining \Cref{eq:g-1,eq:g-2} we obtain: 
    
    \begin{align*}
        \g_t\leq\g^*
        &=\min\left\{\frac{2-L_{\max}\r_t-2BL_{\max}\r_t(1-\l_t)^2}{2BL_{\max}\left(2L_{\max}^2\r_t^2(1-\l_t)^2+1\right)},\frac{\m-L_{\max}\r_t\left(\m+2(B\m+A)(1-\l_t)^2\right)}{2L_{\max}(B\m+A)\left(2L_{\max}^2\r_t^2(1-\l_t)^2+1\right)}\right\}\\
        &=\frac{\m-L_{\max}\r_t\left(\m+2(B\m+A)(1-\l_t)^2\right)}{2L_{\max}(B\m+A)\left(2L_{\max}^2\r_t^2(1-\l_t)^2+1\right)},
    \end{align*}
    
    where the last inequality holds due to:
    
    \begin{align*}
        \frac{\m-L_{\max}\r_t\left(\m+2(B\m+A)(1-\l_t)^2\right)}{2L_{\max}(B\m+A)\left(2L_{\max}^2\r_t^2(1-\l_t)^2+1\right)}
        &\leq\frac{2-L_{\max}\r_t-2BL_{\max}\r_t(1-\l_t)^2}{2BL_{\max}\left(2L_{\max}^2\r_t^2(1-\l_t)^2+1\right)}\Leftrightarrow\\
        \frac{\m-L_{\max}\r_t\left(\m+2(B\m+A)(1-\l_t)^2\right)}{B\m+A}
        &\leq\frac{2-L_{\max}\r_t-2BL_{\max}\r_t(1-\l_t)^2}{B}\Leftrightarrow\\
        B\m-BL_{\max}\r_t\m-2BL_{\max}\r_t(B\m+A)(1-\l_t)^2
        &\leq2B\m+2A-B_{\max}\r_t\m-AL_{\max}\r_t\\
        &\q\q-2BL_{\max}\r_t(B\m+A)(1-\l_t)^2.
    \end{align*}
    The above derivation simplifies to the below inequality:
    \begin{align*}
        0\leq B\m+A(2-L_{\max}\r_t),
    \end{align*}
    which is true by the restriction on $\rho$ in \eqref{eq:r-1},  that implies the quantity $(2-L_{\max}\r_t)$ is non-negative due to:
    \begin{align*}
        \r_t\leq\frac{2}{L_{\max}\left(1+2B(1-\l_t)^2\right)}\leq\frac{2}{L_{\max}}.
    \end{align*}
    
    Now setting constant $\r_t=\r^*$, $\g_t=\g^*$ and $\l_t=\l\in[0,1]$ in (\ref{eq:pl-step-before-linear}) we have
    \begin{align*}
        \label{eq:spec}
        \E&[f(x^{t+1})-f(x^*)|x^t]\\
        &\leq(1-\g\m)[f(x^t)-f(x^*)]\\
        &\q+\g L_{\max}\left[C\r(1-\l)^2+2C\g L_{\max}^2\r^2(1-\l)^2+C\g+\r\l^2+2\g L_{\max}^2\r^2\l^2\right].\numberthis
    \end{align*}
    
    Taking expectation on (\ref{eq:spec}) and using the tower property we get 
    \begin{align*}
        \label{eq:proof-pl-final}
        \E[&f(x^{t+1})-f(x^*)]\\
        &\leq\left(1-\g\mu\right)\E[f(x^t)-f(x^*)]\\
        &\q+\g L_{\max}\left[C\r(1-\l)^2+2C\g L_{\max}^2\r^2(1-\l_t)^2+C\g+\r\l^2+2\g L_{\max}^2\r^2\l^2\right]\\
        &=\left(1-\g\mu\right)\E[f(x^t)-f(x^*)]+\g L_{\max}\left(C\g+\r(1+2\g L_{\max}^2\r)\left[\l^2+C(1-\l_t)^2\right]\right).\numberthis
    \end{align*}
    
    Recursively applying the above and summing up the resulting geometric series yields: 
    \begin{align*}
        \E[f(x^{t+1})-f(x^*)]
        &\leq\left(1-\g\m\right)^t[f(x^0)-f(x^*)]\\
        &\q+\g L_{\max}\left(C\g+\r(1+2\g L_{\max}^2\r)\left[\l^2+C(1-\l)^2\right]\right)\sum_{j=0}^t(1-\g\m)^j.
    \end{align*}
    Using the fact that $\sum_{j=0}^t(1-\g\m)^j\leq\frac{1}{\g\m}$ completes the proof. 
\end{proof}

\subsection{Proof of \texorpdfstring{\Cref{thm:unif-sam-pl-abc-dec}}{Theorem 3.5}}

\begin{theorem}
    \label{thm:unif-sam-pl-abc-dec-app}
    Assume that each $f_i$ is $L_i$-smooth, $f$ is $\mu$-PL and the \ref{eq:abc} is satisfied. Then the iterates of \ref{eq:unifiedsam} with $\r_t=\min\left\{\frac{1}{2t+1},\r^*\right\}$ and $\g_t=\min\left\{\frac{2t+1}{(t+1)^2\m},\g^*\right\}$, where $\r^*$ and $\g^*$ are defined in \Cref{thm:unif-sam-pl-abc}, satisfy: 
    \begin{align*}
        E[f(x^t)-f(x^*)]\leq O\left(\frac{1}{t}\right).
    \end{align*}
\end{theorem}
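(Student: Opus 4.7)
The plan is to reuse the one-step inequality established in the proof of \Cref{thm:unif-sam-pl-abc}, which holds at each iteration as long as $\rho_t\leq\rho^*$ and $\gamma_t\leq\gamma^*$. Writing $\delta_t := \E[f(x^t)-f(x^*)]$ and
\[
N_t := \gamma_t L_{\max}\bigl(C\gamma_t+\rho_t(1+2\gamma_t L_{\max}^2\rho_t)[\lambda^2+C(1-\lambda)^2]\bigr),
\]
the same derivation (unchanged by allowing $\gamma_t,\rho_t$ to vary with $t$) yields $\delta_{t+1}\leq (1-\gamma_t\mu)\delta_t+N_t$. The outer $\min$ in the definitions of $\rho_t$ and $\gamma_t$ guarantees that the required constraints $\rho_t\leq\rho^*$ and $\gamma_t\leq\gamma^*$ are satisfied at every $t$, so this time-varying recursion is valid throughout the run.

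The key observation is that the decreasing branch $\gamma_t=(2t+1)/((t+1)^2\mu)$ satisfies $1-\gamma_t\mu=t^2/(t+1)^2$. Multiplying the recursion by $(t+1)^2$ and introducing $u_t:=t^2\delta_t$ reduces it to the simple telescoping inequality $u_{t+1}\leq u_t+(t+1)^2 N_t$. It therefore suffices to show that $(t+1)^2 N_t\leq K$ for some constant $K$ once the decreasing branches are active; summing from the switching time $t_0$ to $T$ then gives $u_T\leq u_{t_0}+K(T-t_0)$, and hence $\delta_T=O(1/T)$.

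To bound $(t+1)^2 N_t$ I plug in $\gamma_t=(2t+1)/((t+1)^2\mu)$ and $\rho_t=1/(2t+1)$. Direct algebra shows $\gamma_t^2=O(1/(t^2\mu^2))$, $\gamma_t\rho_t=O(1/(t^2\mu))$, and the cross term involving $2\gamma_t L_{\max}^2\rho_t$ is of even smaller order, so every summand inside $N_t$ is $O(1/t^2)$. Multiplying by $(t+1)^2$ produces a uniform bound $K$ with explicit dependence on $L_{\max}$, $\mu$, $C$ and $\lambda$, which is exactly what the telescoping step requires.

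The initial iterations, during which the $\min$ selects the constants $\gamma^*$ and $\rho^*$, are handled by invoking \Cref{thm:unif-sam-pl-abc} directly: after a finite threshold $t_0$ that depends only on $\mu$, $\gamma^*$, $\rho^*$ the decreasing branches take over, and the linear-convergence bound supplies a $T$-independent value $\delta_{t_0}\leq\delta_*$. Combining the two phases yields $\delta_T\leq t_0^2\delta_*/T^2+K/T=O(1/T)$. The main obstacle I anticipate is the bookkeeping: one must verify that the cross term $2\gamma_t L_{\max}^2\rho_t$ inside $N_t$ does not spoil the $O(1/t^2)$ decay, and then assemble the threshold $t_0$, the plateau error $\delta_*$, and the telescoping constant $K$ into a single explicit big-$O$ constant depending on $A$, $B$, $C$, $L_{\max}$, $\mu$, $\lambda$ and the initial suboptimality $f(x^0)-f(x^*)$.
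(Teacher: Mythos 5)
Your proposal is correct and follows essentially the same route as the paper's proof: the same one-step recursion $\delta_{t+1}\leq(1-\gamma_t\mu)\delta_t+N_t$, the same observation that $1-\gamma_t\mu=t^2/(t+1)^2$ on the decreasing branch, the same multiplication by $(t+1)^2$ followed by telescoping from the finite switching time $t^*$, and the same uniform bound on $(t+1)^2N_t$ (the paper absorbs the cross term by first noting $\gamma_t\rho_t\leq1/\mu$). The only cosmetic difference is that the paper treats $\delta_{t^*}$ directly as a finite constant rather than invoking the constant-step-size theorem to bound it.
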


\begin{proof}[Proof of \Cref{thm:unif-sam-pl-abc-dec}]
    Since $\g_t=\min\left\{\frac{2t+1}{(t+1)^2\m},\g^*\right\}\leq\g^*$ and $\r_t=min\left\{\frac{1}{2t+1},\r^*\right\}\leq\r^*$ we get that inequality (\ref{eq:proof-pl-final}) holds for any $t\geq0$, hence we have:
    \begin{align*}
        \label{eq:proof-dec-1}
        \E[f(x^{t+1})-f(x^*)]
        &\leq\left(1-\g_t\m\right)\E[f(x^t)-f(x^*)]\\
        &\q+\g_tL_{\max}\left(C\g_t+\r_t(1+2\g_t\r_tL_{\max}^2)\left[\l^2+C(1-\l)^2\right]\right)\\
        &=\left(1-\g_t\m\right)\E[f(x^t)-f(x^*)]\\
        &\q+CL_{\max}\g_t^2+\g_t\r_tL_{\max}^2(1+2\g_t\r_tL_{\max}^2)\left[\l^2+C(1-\l)^2\right]\\
        &\leq\left(1-\g_t\m\right)\E[f(x^t)-f(x^*)]\\
        &\q+CL_{\max}\g_t^2+\g_t\r_tL_{\max}^2\left(1+\frac{2L_{\max}^2}{\m}\right)\left[\l^2+C(1-\l)^2\right],\numberthis
    \end{align*}
    where the last inequality follows from $\g_t\leq\frac{2t+1}{(t+1)^2\m}$ and $\r_t\leq\frac{1}{2t+1}$, and thus $\g_t\r_t\leq\frac{1}{(t+1)^2\m}\leq\frac{1}{\m}$. Now set $\d_t=\E[f(x^t)-f(x^*)]$, $R=CL_{\max}$ and $Q=L_{\max}^2\left(1+\frac{2L_{\max}^2}{\m}\right)\left[\l^2+C(1-\l)^2\right]$. Then the inequality (\ref{eq:proof-dec-1}) takes the following form: 
    \begin{align}
        \label{eq:proof-dec-2}
        \d_{t+1}\leq(1-\g_t\m)\d_t+Q\g_t\r_t+R\g_t^2.
    \end{align}
    Now since the sequences $\frac{2t+1}{(t+1)^2\m}$ and $\frac{1}{2t+1}$ are decreasing there exists an integer $t^*\in\N$ such that for any $t\geq t^*$ we have $\g_t=\frac{2t+1}{(t+1)^2\m}$ and $\r_t=\frac{1}{2t+1}$. Substituting in (\ref{eq:proof-dec-2}) we get that for any $t\geq t^*$ we have 
    \begin{align*}
        \d_{t+1}
        &\leq\frac{t^2}{(t+1)^2}\d_t+\frac{Q}{\m(t+1)^2}+\frac{R(2t+1)^2}{\m^2(t+1)^4}\\
        &\leq\frac{t^2}{(t+1)^2}\d_t+\frac{Q\m+4R}{\m^2(t+1)^2},
    \end{align*}
    because $\frac{(2t+1)^2}{(t+1)^4}\leq\frac{4(t+1)^2}{(t+1)^4}=\frac{4}{(t+1)^2}$. Multiplying both sides with $(t+1)^2$ and rearranging we have 
    \begin{align*}
        (t+1)^2\d_{t+1}-t^2\d_t\leq\frac{Q\m+4R}{\m^2}.
    \end{align*}
    Summing for $t=t^*,\dots,T-1$ and telescoping we have 
    \begin{align*}
        T^2\d_T\leq(t^*)^2\d_{t^*}+\frac{Q\m+4R}{\m^2}(T-t^*-1).
    \end{align*}
    Changing notation from $T$ to $t$ we get 
    \begin{align*}
        \E[f(x^t)-f(x^*)]
        &\leq\frac{(t^*)^2\d_{t^*}}{t^2}+\frac{Q\m+4R}{\m^2}\frac{t-t^*-1}{t^2}\\
        &\leq\frac{(t^*)^2\d_{t^*}}{t^2}+\frac{Q\m+4R}{\m^2}\frac{1}{t}\\
        &=O\left(\frac{1}{t}\right).
    \end{align*}
    This completes the proof. 
\end{proof}

\subsection{Proof of \texorpdfstring{\Cref{prop:unif-sam-abc-general}}{Proposition 3.6}}

\begin{proposition}
    \label{prop:unif-sam-abc-general-app}
    Assume that each $f_i$ is $L_i$-smooth and the \ref{eq:abc} is satisfied. Then the iterates of \ref{eq:unifiedsam} with $\r\leq\min\left\{\frac{1}{4L_{\max}},\frac{1}{8BL_{\max}(1-\l)^2}\right\}$ and $\g\leq\frac{1}{8BL_{\max}}$, satisfy:
    \begin{align*}
        \min_{t=0,\dots,T-1}\E\|\nabla f(x^t)\|^2
        &\leq\frac{2\left(1+2A\g L_{\max}\left[\r(1-\l)^2(1+2\g\r L_{\max}^2)+\g\right]\right)^T}{T\g}[f(x^0)-f^{\inf}]\\
        &\q+2L_{\max}\left[C\g+\r(1+2\g\r L_{\max}^2)(\l^2+C(1-\l)^2)\right].
    \end{align*}
\end{proposition}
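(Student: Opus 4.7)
The plan is to replicate the opening of the proof of \Cref{thm:unif-sam-pl-abc} verbatim but to stop before the PL inequality is invoked and instead apply the descent lemma \Cref{lem:descent}. Specifically, I would start with $L_{\max}$-smoothness of $f$ along the step direction, take conditional expectation on $x^t$, and invoke \Cref{lem:sam-grad} together with \Cref{lem:sam-prod} (with the choice $\b = L_{\max}\r$) to control the cross term and the squared norm of the perturbed stochastic gradient. Substituting \ref{eq:abc} for $\E_\mathcal{D}\|g(x^t)\|^2$ then yields, exactly as in the main step of the PL proof,
\begin{align*}
\E[f(x^{t+1})-f^{\inf}\mid x^t] - [f(x^t)-f^{\inf}]
&\leq -\g\bigl(1 - \tfrac{L_{\max}\r}{2} - BL_{\max}[\r(1-\l)^2(1+2\g L_{\max}^2\r) + \g]\bigr)\|\nabla f(x^t)\|^2 \\
&\quad + 2A\g L_{\max}[\r(1-\l)^2(1+2\g L_{\max}^2\r) + \g]\,[f(x^t)-f^{\inf}] + N',
\end{align*}
with $N' := \g L_{\max}[C\g + \r(1+2\g L_{\max}^2\r)(\l^2 + C(1-\l)^2)]$.

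The second step is to force the coefficient of $\|\nabla f(x^t)\|^2$ to be at least $\g/2$ via the step-size restrictions. Under $\r \leq 1/(4L_{\max})$ we have $L_{\max}\r/2 \leq 1/8$; under $\r \leq 1/(8BL_{\max}(1-\l)^2)$ and $\g \leq 1/(8BL_{\max})$, both $BL_{\max}\r(1-\l)^2$ and $BL_{\max}\g$ are at most $1/8$; and the cross term $2B\g L_{\max}^3 \r^2(1-\l)^2$ is at most $1/64$ after pairing $B\g \leq 1/(8L_{\max})$ with $L_{\max}^3\r^2 \leq L_{\max}/16$. The four pieces sum to strictly less than $1/2$, so the parenthesized factor is at least $1/2$. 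The convention $1/0 = \infty$ handles the degenerate cases $B = 0$ or $\l = 1$ transparently.

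Finally, rearranging and taking unconditional expectation (using the tower property) yields the descent recursion $r_t \leq g\,\d_t - \d_{t+1} + N'$ with $r_t = \tfrac{\g}{2}\E\|\nabla f(x^t)\|^2$, $\d_t = \E[f(x^t)-f^{\inf}]$, and multiplicative factor $g = 1 + 2A\g L_{\max}[\r(1-\l)^2(1+2\g L_{\max}^2\r) + \g]$. Invoking \Cref{lem:descent} and multiplying through by $2/\g$ produces the stated bound. The main technical obstacle is the bookkeeping in the second step: the cross-product term $B\g\r^2 L_{\max}^3(1-\l)^2$ cannot be dispatched by any single step-size inequality in isolation and must instead be split across $\g$ and $\r$, as outlined above. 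Everything else is mechanical manipulation of the same ingredients used for \Cref{thm:unif-sam-pl-abc}.
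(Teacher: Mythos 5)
Your proposal is correct and follows essentially the same route as the paper: reuse the descent inequality \eqref{eq:main-step} from the PL proof, force the coefficient of $\|\nabla f(x^t)\|^2$ down to $\g/2$ via the same four step-size bounds (including the $1/64$ bound on the cross term $2B\g L_{\max}^3\r^2(1-\l)^2$), and then apply \Cref{lem:descent}. The only difference is a cosmetic reallocation of the factor $2/\g$ between $r_t$, $\d_t$, and $N$ before invoking the lemma, which yields the identical final bound.
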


\begin{proof}[Proof of \Cref{prop:unif-sam-abc-general}]
    From \Cref{eq:main-step} we have 
    \begin{align*}
        \label{eq:preprop-main}
        \E[&f(x^{t+1})-f^{\inf}|x^t]-[f(x^t)-f^{\inf}]\\
        &\leq-\g\left(1-\frac{L_{\max}\r }{2}-BL_{\max}\left[\r (1-\l)^2+2\g L_{\max}^2\r ^2(1-\l)^2+\g\right]\right)\|\nabla f(x^t)\|^2\\
        &\q+2A\g L_{\max}\left[\r(1-\l)^2+2\g L_{\max}^2\r ^2(1-\l)^2+\g \right]\left[f(x^t)-f^{\inf}\right]\\
        &\q+\g L_{\max}\left[C\r (1-\l)^2+2C\g L_{\max}^2\r ^2(1-\l)^2+C\g +\r \l^2+2\g L_{\max}^2\r ^2\l^2\right]\\
        &\leq-\frac{\g }{2}\|\nabla f(x^t)\|^2+2A\g L_{\max}\left[\r (1-\l)^2+2\g L_{\max}^2\r ^2(1-\l)^2+\g \right]\left[f(x^t)-f^{\inf}\right]\\
        &\q+\g L_{\max}\left[C\r (1-\l)^2+2C\g L_{\max}^2\r ^2(1-\l)^2+C\g +\r \l^2+2\g L_{\max}^2\r ^2\l^2\right].\numberthis
    \end{align*}
    Here inequality (\ref{eq:preprop-main}) is true due to:
    \begin{align*}
        \label{eq:specific-ineq}
        &\left(1-\frac{L_{\max}\r }{2}-BL_{\max}\left[\r (1-\l)^2+2\g L_{\max}^2\r ^2(1-\l)^2+\g\right]\right)\geq\frac{1}{2}\Leftrightarrow\\
        &\frac{L_{\max}\r}{2}+BL_{\max}\r(1-\l)^2+BL_{\max}\g+2B\g L_{\max}^3\r ^2(1-\l)^2\leq\frac{1}{2}.\numberthis
    \end{align*}
    Inequality (\ref{eq:specific-ineq}) holds due to the theorem's constraints on $\gamma$ and $\rho$, which imply that each factor on the left-hand side of inequality (\ref{eq:specific-ineq}) is less than 1/8, as explained below.
    \begin{itemize}
        \item $\frac{L_{\max}\r}{2}\leq\frac{1}{8}$ by the condition on $\r$. 
        
        \item $BL_{\max}\r(1-\l)^2\leq\frac{1}{8}$ by the condition on $\r$. 

        \item $BL_{\max}\g\leq\frac{1}{8}$ by the condition on $\g$. 

        \item $2B\g L_{\max}^3\r^2(1-\l)^2\leq\frac{1}{8}$ because we have $\r\leq\frac{1}{4L_{\max}}$ and $\g\leq\frac{1}{8BL_{\max}}$, so 
        \begin{align*}
            2B\g L_{\max}^3\r^2(1-\l)^2
            &\leq2B\frac{1}{8BL_{\max}}L_{\max}^3\left(\frac{1}{4L_{\max}}\right)^2(1-\l)^2\\
            &=\frac{(1-\l)^2}{64}\\
            &\leq\frac{1}{64}<\frac{1}{8}
        \end{align*}
    \end{itemize}
    
    Summing up the inequalities in the above bullet points results in inequality (\ref{eq:specific-ineq}). 
    By taking expectation and using the tower property in (\ref{eq:preprop-main}), we have 
    \begin{align*}
        \E[&f(x^{t+1})-f^{\inf}]-\E[f(x^t)-f^{\inf}]\\
        &\leq-\frac{\g }{2}\E\|\nabla f(x^t)\|^2+2A\g L_{\max}\left[\r (1-\l)^2+2\g L_{\max}^2\r ^2(1-\l)^2+\g \right]\E\left[f(x^t)-f^{\inf}\right]\\
        &\q+\g L_{\max}\left[C\r (1-\l)^2+2C\g L_{\max}^2\r ^2(1-\l)^2+C\g +\r \l^2+2\g L_{\max}^2\r ^2\l^2\right],
    \end{align*}
    thus
    \begin{align*}
        \label{eq:only}
        \frac{\g}{2}\E\|\nabla f(x^t)\|^2
        &\leq\left(1+2A\g L_{\max}\left[\r(1-\l)^2+2\g L_{\max}^2\r^2(1-\l)^2+\g\right]\right)\E[f(x^t)-f^{\inf}]\\
        &\q-\E[f(x^{t+1})-f^{\inf}]\\
        &\q+\g L_{\max}\left[C\r(1-\l)^2+2C\g L_{\max}^2\r^2(1-\l)^2+C\g+\r\l^2+2\g L_{\max}^2\r^2\l^2\right].\numberthis
    \end{align*}
    
    Let us now set:
    \begin{align*}
        \d_t&=\frac{2}{\g}\E[f(x^t)-f^{\inf}]\geq0\\
        r_t&=\E\|\nabla f(x^t)\|^2\geq0\\
        g&=\left(1+2A\g L_{\max}\left[\r(1-\l)^2+2\g L_{\max}^2\r^2(1-\l)^2+\g\right]\right)\\
        &=\left(1+2A\g L_{\max}\left[\g+\r(1-\l)^2\left(1+2\g L_{\max}^2\right)\right]\right)>1\\
        N&=2L_{\max}\left[C\r(1-\l)^2+2C\g L_{\max}^2\r^2(1-\l)^2+C\g+\r\l^2+2\g L_{\max}^2\r^2\l^2\right]\\
        &=2L_{\max}\left[C\g+\r(1+2\g\r L_{\max}^2)(\l^2+C(1-\l)^2)\right]\geq0
    \end{align*}
    
    and inequality (\ref{eq:only}) takes the following form:
    \begin{align}
        \label{eq:exx}
        \frac{\g}{2}r_t\leq g\d_t-\d_{t+1}+N.
    \end{align}
    Applying \Cref{lem:descent} to (\ref{eq:exx}) completes the proof. 
\end{proof}

\subsection{Proof of \texorpdfstring{\Cref{thm:unif-sam-abc-general}}{Theorem 3.7}}
\label{Thm37proof}

\begin{theorem}
    \label{thm:unif-sam-abc-general-app}
    Let $\e>0$ and set $\d_0=f(x_0)-f^{\inf}\geq0$. For 
    \begin{align*}
        \r=\min\left\{\frac{1}{4L_{\max}},\frac{1}{8BL_{\max}(1-\l)^2},\frac{1}{\sqrt{T}},\frac{\e^2}{12L_{\max}(C(1-\l)^2+\l^2)}\right\}
    \end{align*}
    and 
    \begin{align*}
        \g=\min
        &\left\{\frac{1}{8BL_{\max}},\frac{1}{2L(1-\l)\sqrt{3AL_{\max}}},\frac{1}{6L_{\max}A(1-\l)^2\sqrt{T}},\frac{1}{\sqrt{6ALT}},\right.\\
        &\left.\frac{\e^2}{24L_{\max}^3\left(C(1-\l)^2+\l^2\right)},\frac{\e^2}{12L_{\max}C}\right\}
    \end{align*}
    Then provided that 
    \begin{align*}
        T\geq\frac{\d_0L_{\max}}{\e^2}\max
        &\left\{96B,24(1-\l)\sqrt{3L_{\max}A},\frac{5184L_{\max}A^2(1-\l)^4\d_0}{\e^2},\frac{864\d_0A}{\e^2},\frac{144C}{\e^2},\right.\\
        &\left.\frac{288L_{\max}^2(1-\l)^2}{\e^2}\right\}
    \end{align*}
    we have $\min_{t=0,\dots,T-1}\E\|\nabla f(x^t)\|\leq\e$. 
\end{theorem}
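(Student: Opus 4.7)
The plan is to start from the two-term upper bound of \Cref{prop:unif-sam-abc-general-app} and pick $\rho,\gamma$ so that each of the two terms there is at most $\e^2/2$. Since $(\E\|\nabla f(x^t)\|)^2 \leq \E\|\nabla f(x^t)\|^2$ by Jensen's inequality, a bound $\min_t\E\|\nabla f(x^t)\|^2 \leq \e^2$ immediately yields the claimed $\min_t\E\|\nabla f(x^t)\| \leq \e$.

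For the first (``geometric'') term $\tfrac{2(1+y)^T}{T\gamma}\,\delta_0$ with
\[
y := 2A\gamma L_{\max}\bigl[\rho(1-\lambda)^2(1+2\gamma\rho L_{\max}^2)+\gamma\bigr],
\]
the key observation is that if $yT \leq 1$ then $(1+y)^T \leq e$, so the term collapses (up to an absolute constant) to $\delta_0/(T\gamma)$. I would enforce $yT \leq 1$ by splitting $y$ into its three summands and asking each to be at most a constant over $T$. The $\gamma^2$ piece gives $\gamma \leq \mathcal{O}(1/\sqrt{AL_{\max}T})$, matching the $1/\sqrt{6ALT}$ entry in the theorem; combined with the independent choice $\rho \leq 1/\sqrt{T}$, the $\gamma\rho(1-\lambda)^2$ piece gives $\gamma \leq \mathcal{O}(1/(AL_{\max}(1-\lambda)^2\sqrt{T}))$, matching the $1/(6L_{\max}A(1-\lambda)^2\sqrt{T})$ entry; and the cubic $\gamma^2\rho^2(1-\lambda)^2$ piece is automatic once these two hold together with $\rho \leq 1/(4L_{\max})$. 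Demanding in addition $\delta_0/(T\gamma) \leq \e^2/(4e)$ and reading this off for each of the four possible binding values of $\gamma$ stated in the theorem produces the first four summands in the lower bound on $T$ (the ones involving $96B$, $24(1-\lambda)\sqrt{3L_{\max}A}$, $5184L_{\max}A^2(1-\lambda)^4\delta_0/\e^2$, and $864A\delta_0/\e^2$).

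For the second (``noise'') term $2L_{\max}\bigl[C\gamma + \rho(1+2\gamma\rho L_{\max}^2)(\lambda^2+C(1-\lambda)^2)\bigr]$, I would split into three pieces and force each below $\e^2/6$. This gives directly $\gamma \leq \e^2/(12L_{\max}C)$ and $\rho \leq \e^2/(12L_{\max}(\lambda^2+C(1-\lambda)^2))$, while the mixed $\gamma\rho^2$ constraint, once the bound $\rho \leq 1/(4L_{\max})$ is invoked, collapses to $\gamma \leq \e^2/(24L_{\max}^3(\lambda^2+C(1-\lambda)^2))$. These are exactly the remaining entries in the minima defining $\gamma$ and $\rho$. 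The last two lower-bound summands for $T$, namely $144C\delta_0 L_{\max}/\e^4$ and $288L_{\max}^3(1-\lambda)^2\delta_0/\e^4$, then arise by requiring $T\gamma \geq 4e\delta_0/\e^2$ when these $\e$-dependent pieces of $\gamma$ are the binding ones in the minimum.

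The proof is then essentially case-by-case bookkeeping: each of the six entries in the $\max\{\cdot\}$ for $T$ is paired with exactly one binding piece of the $\min\{\cdot\}$ in the definitions of $\rho$ or $\gamma$, and the corresponding lower bound on $T$ is read off from $\delta_0/(T\gamma) \leq \e^2/(4e)$ specialized to that piece. The main obstacle is keeping the constants consistent across the three-way split $yT \leq 1$ and the three-way split of the noise term, especially because $\rho$ and $\gamma$ are coupled through the products $\gamma\rho$, $\gamma\rho^2$ and through the $(1-\lambda)^2$ factors; the case analysis over which piece of each minimum is active is mechanical but requires careful tracking of the constants $2,3,4,6,8,12,\ldots$ to land on exactly the numerical thresholds stated in the theorem.
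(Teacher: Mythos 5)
Your proposal follows the paper's proof essentially step for step: bound the geometric factor by a constant via $(1+y)^T\leq\exp(yT)\leq e$ after splitting $yT\leq1$ into three summands, split the noise term into three pieces each at most $\varepsilon^2/6$, and read off the six lower bounds on $T$ by plugging each binding value of $\gamma$ into $T\gamma\gtrsim\delta_0/\varepsilon^2$. One minor caveat: the cubic $\gamma^2\rho^2(1-\lambda)^2$ piece is \emph{not} automatic from the other constraints (a short computation shows it would additionally require $T\gtrsim L_{\max}/A$); the paper instead derives from it the separate constraint $\gamma\leq\frac{1}{2L_{\max}(1-\lambda)\sqrt{3AL_{\max}}}$, which is precisely the entry you already include in your case analysis, so your argument goes through once that constraint is kept explicit rather than dismissed.
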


\begin{proof}[Proof of \Cref{thm:unif-sam-abc-general}]
    From \Cref{prop:unif-sam-abc-general-app} under the condition that $\r\leq\min\left\{\frac{1}{4L_{\max}},\frac{1}{8BL_{\max}(1-\l)^2}\right\}$ and $\g\leq\frac{1}{8BL_{\max}}$ we have 
    \begin{align*}
        \label{eq:cor-main-1}
        \min_{t=0,\dots,T-1}\E\|\nabla f(x^t)\|^2
        &\leq\frac{2\left(1+2A\g L_{\max}\left[\r(1-\l)^2(1+2\g\r L_{\max}^2)+\g\right]\right)^T}{T\g}[f(x^0)-f^{\inf}]\\
        &\q+2L_{\max}\left[C\g+\r(1+2\g\r L_{\max}^2)(\l^2+C(1-\l)^2)\right].\numberthis
    \end{align*}
    Using the fact that $1+x\leq\exp(x)$, we have that 
    \begin{align*}
        \label{boasjla}
        &\left(1+2A\g L_{\max}\left[\r(1-\l)^2(1+2\g\r L_{\max}^2)+\g\right]\right)^T\\
        &\q\leq\exp\left(2TA\g L_{\max}\left[\r(1-\l)^2(1+2\g\r L_{\max}^2)+\g\right]\right)\\
        &\q\leq\exp(1)<3.\numberthis
    \end{align*}
    We note that for the second inequality to hold, it is enough to assume that 
    \begin{align*}
        2TA\g L_{\max}\left[\r(1-\l)^2(1+2\g\r L_{\max}^2)+\g\right]
        &=2TAL_{\max}\g\left[\r(1-\l)^2+2\g L_{\max}^2\r^2(1-\l)^2+\g\right]\\
        &\leq1,
    \end{align*}
    which is true if we have that,
    \begin{itemize}
        \item $2TAL_{\max}\g\r(1-\l)^2\leq1/3$
        \item $4TAL_{\max}^3\g\r^2(1-\l)^2\leq1/3$
        \item $2TAL_{\max}\g^2\leq1/3$.
    \end{itemize}
    Imposing the restriction $\r\leq\frac{1}{\sqrt{T}}$, the above inequalities can be expressed as (by solving for $\g$):
    \begin{itemize}
        \item $\g\leq\frac{1}{6L_{\max}A(1-\l)^2\sqrt{T}}$
        \item $\g\leq\frac{1}{2L(1-\l)\sqrt{3AL_{\max}}}$
        \item $\g\leq\frac{1}{\sqrt{6ALT}}$
    \end{itemize}
    Using these restrictions on $\gamma$, we are able to substitute the bound of \eqref{boasjla} in (\ref{eq:cor-main-1}) to get:
    \begin{align*}
        \label{eq:cor-main-2}
        \min_{t=0,\dots,T-1}\E\|\nabla f(x^t)\|^2
        &\leq\frac{6\d_0}{T\g}+2L_{\max}\left[C\g+\r(1+2\g\r L_{\max}^2)(\l^2+C(1-\l)^2)\right].\numberthis
    \end{align*}
    To make the right hand side of (\ref{eq:cor-main-2}) smaller than $\e^2$, we require that the second term satisfies 
    \begin{align*}
        &2L_{\max}\left[C\g+\r(1+2\g\r L_{\max}^2)(\l^2+C(1-\l)^2)\right]\leq\frac{\e^2}{2}\Leftrightarrow\\
        &2L_{\max}\left[C\r(1-\l)^2+2C\g L_{\max}^2\r^2(1-\l)^2+C\g+\r\l^2+2\g L_{\max}^2\r^2\l^2\right]\leq\frac{\e^2}{2}\Leftrightarrow\\
        &2L_{\max}\left[\r\left(C(1-\l)^2+\l^2\right)+2L_{\max}^2\g\r^2\left(C(1-\l)^2+\l^2\right)+C\g\right]\leq\frac{\e^2}{2}
    \end{align*}
    For this to hold it is enough to have 
    \begin{itemize}
        \item $2L_{\max}\r(C(1-\l)^2+\l^2)\leq\frac{\e^2}{6}\Leftarrow\r\leq\frac{\e^2}{12L_{\max}(C(1-\l)^2+\l^2)}$
        \item $4L_{\max}^3\g\r^2\left(C(1-\l)^2+\l^2\right)\leq\frac{\e^2}{6}\stackrel{\r<1}{\Leftarrow}\g\leq\frac{\e^2}{24L_{\max}^3\left(C(1-\l)^2+\l^2\right)}$
        \item $2L_{\max}C\g\leq\frac{\e^2}{6}\Leftarrow\g\leq\frac{\e^2}{12L_{\max}C}$
    \end{itemize}
    Similarly, for the first term, we get that the number of iterations must satisfy: 
    \begin{align}
        \label{eq:cor-main-3}
        \frac{6\d_0}{T\g}\leq\frac{\e^2}{2}\Leftrightarrow T\geq\frac{12\d_0}{\g\e^2}
    \end{align}
    Hence so far we need the following restrictions on $\r$ and $\g$:
    \begin{itemize}
        \item $\r\leq\min\left\{\frac{1}{4L_{\max}},\frac{1}{8BL_{\max}(1-\l)^2}\right\}$ and $\g\leq\frac{1}{8BL_{\max}}$ (by the restrictions of \Cref{prop:unif-sam-abc-general-app})

        \item $\r\leq\frac{1}{\sqrt{T}}$ and $\r\leq\frac{\e^2}{12L_{\max}(C(1-\l)^2+\l^2)}$
        
        \item $\g\leq\frac{1}{6L_{\max}A(1-\l)^2\sqrt{T}}$ and $\g\leq\frac{1}{2L(1-\l)\sqrt{3AL_{\max}}}$ and $\g\leq\frac{1}{\sqrt{6ALT}}$ and $\g\leq\frac{\e^2}{24L_{\max}^3\left(C(1-\l)^2+\l^2\right)}$ and $\g\leq\frac{\e^2}{12L_{\max}C}$
    \end{itemize}
    Plugging each of the previous bounds of $\g$ into (\ref{eq:cor-main-3}) we get 
    \begin{itemize}
        \item $T\geq\frac{96BL_{\max}\d_0}{\e^2}$
        \item $T\geq\frac{24L_{\max}\d_0(1-\l)\sqrt{3L_{\max}A}}{\e^2}$
        \item $T\geq\frac{5184L_{\max}^2A^2(1-\l)^4\d_0^2}{\e^4}$
        \item $T\geq\frac{864\d_0^2AL_{\max}}{\e^4}$
        \item $T\geq\frac{144L_{\max}C\d_0}{\e^4}$
        \item $T\geq\frac{288L_{\max}^3\d_0(1-\l)^2}{\e^4}$
    \end{itemize}
    Finally, collecting all the terms into a single bound we have: 
        \begin{align*}
        &\r=\min\left\{\frac{1}{4L_{\max}},\frac{1}{8BL_{\max}(1-\l)^2},\frac{1}{\sqrt{T}},\frac{\e^2}{12L_{\max}(C(1-\l)^2+\l^2)}\right\}\\
        &\g=\min\left\{\frac{1}{8BL_{\max}},\frac{1}{2L(1-\l)\sqrt{3AL_{\max}}},\frac{1}{6L_{\max}A(1-\l)^2\sqrt{T}},\frac{1}{\sqrt{6ALT}},\right.\\
        &\left.\q\q\q\q\frac{\e^2}{24L_{\max}^3\left(C(1-\l)^2+\l^2\right)},\frac{\e^2}{12L_{\max}C}\right\}\\
        &T\geq\frac{\d_0L_{\max}}{\e^2}\max\left\{96B,24(1-\l)\sqrt{3L_{\max}A},\frac{5184L_{\max}A^2(1-\l)^4\d_0}{\e^2},\frac{864\d_0A}{\e^2},\frac{144C}{\e^2},\right.\\
        &\q\q\q\q\q\left.\frac{288L_{\max}^2(1-\l)^2}{\e^2}\right\}.
    \end{align*}
    This completes the proof. 
\end{proof}

\newpage
\section{Additional Experiments}
\label{sec:more-exps}

In this section, we present additional experimental evaluations of \ref{eq:unifiedsam}, following the same setup as in \citet{li2023enhancing}. Specifically, we train ResNet-18 and WRN-28-10 on CIFAR10 and CIFAR100 datasets. Standard data augmentation techniques, including random cropping, random horizontal flipping, and normalization \citet{devries2017improved}, are employed. The models are trained for $200$ epochs with a batch size of $128$, using a cosine scheduler starting from $0.05$. Weight decay is set to $0.001$. For SAM, we use $\r=0.1$ for CIFAR10 and $\r=0.2$ for CIFAR100. Each experiment is repeated three times, and we report the average of the maximum test accuracy along with the standard error. The numerical results are presented in \Cref{tab:vas_pap_sam_cifar10,tab:vas_pap_sam_cifar100}, demonstrate that \ref{eq:unifiedsam} consistently improves the test accuracy over both \ref{eq:usam} and \ref{eq:nsam} across all tested models. Lastly, we also observe that careful tuning of the parameter $\l$ is essential for achieving optimal performance.

\begin{table}[H]
    \caption{Test accuracy (\%) of \ref{eq:unifiedsam} on various neural networks trained on CIFAR10.}
    \label{tab:vas_pap_sam_cifar10}
    \centering
    \begin{tabular}{c|ccccc}
        \textbf{Model} & $\l=0.0$ & $\l=0.5$ & $\l=1.0$ & $\l=1/t$ & $\l=1-1/t$ \\
        \hline
        ResNet-18 & $96.13_{\pm 0.05}$ & $96.16_{\pm 0.03}$ & $\boldsymbol{96.33}_{\pm 0.03}$ & $96.20_{\pm 0.05}$ & $96.22_{\pm 0.09}$ \\
        WideResNet-28-10 & $\boldsymbol{97.26}_{\pm 0.44}$ & $96.98_{\pm 0.08}$ & $97.05_{\pm 0.05}$ & $96.73_{\pm 0.04}$ & $96.63_{\pm 0.35}$ \\
    \end{tabular}
\end{table}

\begin{table}[H]
    \caption{Test accuracy (\%) of \ref{eq:unifiedsam} on various neural networks trained on CIFAR100.}
    \label{tab:vas_pap_sam_cifar100}
    \centering
    \begin{tabular}{c|ccccc}
        Model & $\l=0.0$ & $\l=0.5$ & $\l=1.0$ & $\l=1/t$ & $\l=1-1/t$ \\
        \hline
        ResNet-18 & $80.21_{\pm 0.02}$ & $\boldsymbol{80.28}_{\pm 0.16}$ & $80.14_{\pm 0.07}$ & $80.27_{\pm 0.09}$ & $80.19_{\pm 0.06}$ \\
        WideResNet-28-10 & $83.49_{\pm 0.23}$ & $\boldsymbol{83.71}_{\pm 0.03}$ & $83.55_{\pm 0.19}$ & $83.55_{\pm 0.10}$ & $83.62_{\pm 0.16}$ \\
    \end{tabular}
\end{table}

\begin{table}[H]
    \caption{Test accuracy (\%) of \ref{eq:unifiedsam} for PRN-18 on CIFAR10, evaluated across different values of $\r$ and $\l$. With bold we highlight the best performance for fixed $\r$.}
    \label{tab:model_prn_dset_10}
    \centering
    \begin{tabular}{c|ccccc}
        \textbf{Unified SAM} & $\l=0.0$ & $\l=0.5$ & $\l=1.0$ & $\l=1/t$ & $\l=1-1/t$ \\
        \hline
        $\r=0.1$ & $95.29_{\pm 0.09}$ & $95.32_{\pm 0.02}$ & $\boldsymbol{95.59}_{\pm 0.09}$ & $95.24_{\pm 0.03}$ & $95.53_{\pm 0.11}$ \\
        $\r=0.2$ & $95.25_{\pm 0.14}$ & $95.48_{\pm 0.05}$ & $95.5_{\pm 0.02}$ & $95.38_{\pm 0.11}$ & $\boldsymbol{95.58}_{\pm 0.07}$ \\
        $\r=0.3$ & $95.25_{\pm 0.11}$ & $95.24_{\pm 0.02}$ & $95.18_{\pm 0.04}$ & $95.12_{\pm 0.19}$ & $\boldsymbol{95.26}_{\pm 0.1}$ \\
        $\r=0.4$ & $94.76_{\pm 0.09}$ & $94.98_{\pm 0.07}$ & $\boldsymbol{94.7}_{\pm 0.02}$ & $94.64_{\pm 0.03}$ & $94.61_{\pm 0.09}$ \\
        \hline
        \textbf{SGD} & \multicolumn{5}{c}{$94.82_{\pm 0.02}$} \\
    \end{tabular}
\end{table}

\begin{table}[H]
    \caption{Test accuracy (\%) of \ref{eq:unifiedsam} for PRN-18 on CIFAR100, evaluated across different values of $\r$ and $\l$. With bold we highlight the best performance for fixed $\r$.}
    \label{tab:model_prn_dset_100}
    \centering
    \begin{tabular}{c|ccccc}
        \textbf{Unified SAM} & $\l=0.0$ & $\l=0.5$ & $\l=1.0$ & $\l=1/t$ & $\l=1-1/t$ \\
        \hline
        $\r=0.1$ & $78.28_{\pm 0.15}$ & $78.28_{\pm 0.06}$ & $78.32_{\pm 0.22}$ & $78.33_{\pm 0.32}$ & $\boldsymbol{78.39}_{\pm 0.31}$ \\
        $\r=0.2$ & $\boldsymbol{78.98}_{\pm 0.18}$ & $78.68_{\pm 0.13}$ & $78.96_{\pm 0.12}$ & $78.87_{\pm 0.02}$ & $78.79_{\pm 0.1}$ \\
        $\r=0.3$ & $79.0_{\pm 0.05}$ & $78.95_{\pm 0.07}$ & $79.21_{\pm 0.08}$ & $78.73_{\pm 0.06}$ & $\boldsymbol{79.27}_{\pm 0.08}$ \\
        $\r=0.4$ & $78.57_{\pm 0.26}$ & $78.76_{\pm 0.3}$ & $\boldsymbol{79.05}_{\pm 0.16}$ & $78.36_{\pm 0.09}$ & $78.79_{\pm 0.13}$ \\
        \hline
        \textbf{SGD} & \multicolumn{5}{c}{$76.9_{\pm 0.23}$} \\
    \end{tabular}
\end{table}

\end{document}